\newtheorem{theorem}{Theorem}[section]
\newtheorem{corollary}[theorem]{Corollary}
\newtheorem{lemma}{Lemma}[section]
\newtheorem{proposition}{Proposition}[section]
\newtheorem{remark}{Remark}[section]
\numberwithin{equation}{section}
\def\a{\alpha}
\def\f{\frac}
\def\b{\bar}
\newcommand{\dd}{{\rm d}}
\newcommand{\FU}{\mathbf{U}}
\newcommand{\CD}{\mathcal {D}}
\newcommand{\CE}{\mathcal{E}}
\newcommand{\CL}{\mathcal{L}}
\newcommand{\CM}{\mathcal{M}}
\newcommand{\CV}{\mathcal{V}}
\newcommand{\CU}{\mathcal{U}}
\newcommand{\CP}{\mathcal{P}}
\newcommand{\cw}{\mathcal{W}}
\newcommand{\cu}{\mathfrak{u}}
\newcommand{\cv}{\mathfrak{v}}
\newcommand{\ur}{\underline{\rho}}
\newcommand{\uu}{\underline{u}}
\newcommand{\uv}{\underline{v}}
\newcommand{\pa}{\partial}
\newcommand{\eps}{\epsilon}
\newcommand{\fq}{\mathfrak{Q}}
\newcommand{\fs}{\mathfrak{S}}
\newcommand{\eqdef}{\overset{\mbox{\tiny{def}}}{=}}
\newcommand{\dv}{\text{div}_k}
\journal{Elsevier journal }
\begin{document}
	
	\begin{frontmatter}
		
		
		
		\title{ Long-time instability of planar Poiseuille-type flow in compressible fluid}
		
		
		\author[Y]{Andrew Yang}
		\address[Y]{Department of  Mathematics, City University of Hong Kong, Hong Kong\\
		andryang-c@my.cityu.edu.hk}
	    \author[Z]{Zhu Zhang}
	    \address[Z]{Department of Applied Mathematics, The Hong Kong Polytechnic University, Hong Kong\\
	    zhuama.zhang@polyu.edu.hk}

		\begin{abstract}
		It is well-known that at the high Reynolds number, the linearized Navier-Stokes equations around the inviscid stable shear profile admit growing mode solutions due to the destabilizing effect of the viscosity. This phenomenon, called Tollmien-Schlichting instability, has been rigorously justified by Grenier-Guo-Nguyen [Adv. Math. 292 (2016); Duke J. Math. 165 (2016)] for Poiseuille flows and  boundary layers in the incompressible fluid. To reveal this intrinsic instability mechanism in the compressible setting, in this paper, we study the long-time instability of the Poiseuille flow in a channel.
	Note that this instability arises in a low-frequency regime instead of a high-frequency regime  for the
	Prandtl boundary layer. The proof is based on the quasi-compressible-Stokes iteration introduced by Yang-Zhang in \cite{YZ} and subtle analysis of the dispersion relation for the instability. Note that we do not require symmetric conditions on the background shear flow or perturbations.
		\end{abstract}		
	\end{frontmatter}
\section{Introduction}

Stability/instability properties of laminar flows at the high Reynolds number is the fundamental problem in {\it Hydrodynamic Stability Theory}. Significant progress was made by  pioneering works of Kelvin \cite{K}, Orr \cite{Orr}, Rayleigh \cite{R}, and Heisenberg \cite{H}, among many others. The mathematical theory in this area has been rapidly developed in this decade.

For inviscid flow when the Reynolds number is infinity, the main stabilizing mechanism is the {inviscid damping.}  That is, due to the mixing between vorticity and the background flow, the velocity field  asymptotically converges to a laminar flow that is closed to its background.  This phenomenon has been rigorously justified for Couette flow in linear/nonlinear settings, and for perturbations in high-regularity spaces, cf. \cite{BM,J,Z1,Z}.  On the other hand, if the perturbation is less smooth, some instabilities arise that prevent the solution from converging to the shear flow. In this direction, Lin-Zeng \cite{LZ} constructed the time-periodic solution close to the Couette flow in $H^{\f32-}$-space. Nonlinear instability in Gevrey $2^-$ space of Couette flow was shown by Deng-Masmoudi \cite{DM}. For shear flows other than the Couette, the dynamics is more complicated even in the linear level \cite{BCV,GNRS,J,LZW,WZZ1,WZZ}, and the stability in  full nonlinear setting is shown recently by Ionescu-Jia \cite{IJ}
and Masmoudi-Zhao \cite{MZ}. We also refer to \cite{G1,LMZ} for nonlinear instability results of generic shear flows.

In the high Reynolds number regime, the role of viscosity is mixed. On the one hand, the viscosity can help stabilize the shear flow with the aid of an enhanced dissipation mechanism. Such a mechanism can also be used to characterize the critical size of perturbations for the nonlinear stability of the  corresponding shear flows. We refer to the list of references \cite{BGM1,BGM2,BMV,BH,CLWZ,CWZ,CEW,DL,GNRS,MZ1,MZ,Romanov,WZZ} for the research in this direction. On the other hand, small viscosity may destabilize the flow. This phenomenon, called Tollmien-Schlichting instability, cf. \cite{DR,SG,W}, typically occurs in the laminar flow with boundary layers. Precisely, for the linearized incompressible Navier-Stokes equations  around a 
Prandtl boundary layer, at high tangential frequency, there exists an unstable eigenmode corresponding to Gevrey function space with index $\frac 32$.
Substantial progress on this topic was made by Grenier-Guo-Nguyen \cite{GGN1}
in which they  justified this phenomenon with mathematical rigor. In addition, the validity of Prandtl expansion with critical Gevrey index $\frac 32$ was obtained by G\'erard-Varet-Maekawa-Masmoudi in \cite{GMM1}. For  plane Poiseuille flow, this mechanism induces a long-wave type of instability, cf. \cite{GGN2}.



One of the powerful analytic tools in above studies for incompressible flow is the Orr-Sommerfeld equation, which describes the time evolution of the
Fourier mode behavior of the stream function.  In the absence of viscosity, the Orr-Sommerfeld equation is the Rayleigh equation for inviscid flow. 
Thus, it can be regarded as a singular perturbation of the Rayleigh equation at high Reynolds numbers.
The derivation of this famous equation relies on the stream function representation of the velocity field so that it is effective in the two-dimensional incompressible setting. However, for compressible flow, eigenvalue problems for stability can not be reduced to a simple single equation like the Orr-Sommerfeld. Therefore, despite many works from the physical perspectives \cite{B,Dk,LR,L,MK,R1},  mathematical results on hydrodynamic stability in compressible fluids are very few. The works that we are aware of in this direction are \cite{ADM,ZZZ} for the Couette flow in two and three-dimensional spaces (without boundary) respectively. See also \cite{KN,KN2,MP} for stability of Poiseuille flow in different regimes.

In order to
capture the Orr-Sommerfeld structure in the compressible setting, in \cite{YZ}, the authors introduced a quasi-compressible system so that the Orr-Sommerfeld type equation can be derived and used to approximate the compressible Navier-Stokes for isentropic flow. This newly derived formulation allows one to use the ``incompressible techniques'' to treat problems for compressible flows, and it 
has been proved helpful in the subsonic regime through the quasi-compressible and Stokes iteration to investigate both the Tollmien-Schlichting instability and validity of Prandtl ansatz in suitable function spaces in various physical situations. We also refer to the recent work \cite{MWWZ} for the whole subsonic regime.

As a further exploration of the compressible flow, particularly in a channel, we will study the long-time instability phenomena of the planar Poiseuille flow in this paper. Note that although this kind of instability phenomena has been studied in \cite{GGN2} for Poiseuille flows in the incompressible fluid,  there is no corresponding work for compressible models. Motivated by this, we consider the two-dimensional  compressible Navier-Stokes equations for isentropic flow in a channel $\Omega=\left\{(x,y)\mid x\in \mathbb{R},y\in [-1,1]\right\}:$
\begin{equation}\label{1.1}
	\left\{\begin{aligned}
		&\pa_t\rho+\nabla\cdot(\rho \vec{U})=0,\\
		&\rho\pa_t\vec{U}+\rho\vec{U}\cdot\nabla \vec{U}+\nabla P(\rho)=\mu\eps\Delta \vec{U}+\lambda\eps\nabla(\nabla\cdot\vec{U})+\rho\vec{F},\\
		&\vec{U}|_{y=\pm 1}=\vec{0}.
	\end{aligned}\right.
\end{equation}
Here, $\rho$,  $\vec{U}=(u,v)$ and $P(\rho)$ represent the density, velocity field and pressure of the fluid respectively. The vector field $\vec{F}$ is a given external force. 
The constants $\mu>0, \lambda\geq 0$ are the  rescaled shear and bulk viscosity, and  $0<\eps\ll1$ is a small parameter  which is proportional to the reciprocal of the Reynolds number. Without loss of generality, the constant $\mu$ is set to 1 throughout the paper. 

As well known, the  planar Poiseuille flow is a typical steady solution profile to the Navier-Stokes equations when 
the tangential derivative of the pressure function is balanced by the normal diffusion of the velocity field. In the
compressible setting under consideration, we can fix a class of Poiseuille-type profile as 
\begin{align}\label{1.1.1}
	(\rho_s,\vec{\FU}_s)\eqdef(1,U_s(y),0),~y\in (-1,1),
\end{align}
which is a steady solution to \eqref{1.1} with a small constant force $\vec{F}=(-2U_s''\mu,0)$. Here we assume that
 $U_s$ satisfies
\begin{align}
	U_s(\pm 1)=0,~U_s'(-1)>0,~U_s'(1)<0,\text{ and }U_s''(y)<0, ~\text{ for }y\in (-1,1).\label{as}
\end{align}
A typical example of this class of profile is the plane Poiseuille flow $U_s(y)=1-y^2$.

To study the stability/instability of the Poiseuille flow, we consider a  small  perturbation of the Poiseuille profile
\begin{align}
	\rho=1+\tilde{\rho},~u=U_s(y)+\tilde{u},~v=\tilde{v}.\nonumber
\end{align} 
By linearization, we have the following system for $(\rho, \vec{U})$
\begin{equation}\label{1.3}
	\left\{
	\begin{aligned}
		&\pa_t\rho+U_s\pa_x\rho+\nabla\cdot \vec{U}=0,~t>0,~(x,y)\in \mathbb{R}\times{(-1,1)},\\
		&\pa_t\vec{U}+U_s\pa_x\vec{U}+m^{-2}\nabla \rho+v\pa_yU_s\vec{e}_1-{\eps}\Delta\vec{U}-\lambda\eps\nabla (\nabla\cdot \vec{U})-\rho\vec{F}=0,\\
		&\vec{U}|_{y=\pm 1}=\vec{0}.
	\end{aligned}
	\right.
\end{equation}
Here we have dropped tilde for the simplicity of notations. Note that in our setting,  $m=\frac{1}{\sqrt{P'(1)}}$ is the Mach number because the maximum magnitude of
the Poiseuille profile is $1$ and $\sqrt{P'(1)}$ is the background sound speed.

For stability analysis, we look for solution $(\rho,u,v)$ to the linearized compressible Navier-Stokes system \eqref{1.3} in the following form
\begin{align}\label{1.2}
	(\rho,u,v)=e^{ik(x-ct)}(\hat{\rho},\hat{u},\hat{v})(y),
\end{align}
where  $k>0$ is  frequency and $c\in \mathbb{C}$ is the spectral parameter. Substituting \eqref{1.2} into \eqref{1.3}, we obtain the following eigenvalue problem for the profile function $(\hat{\rho},\hat{u},\hat{v})$
\begin{equation}\label{1.4}
	\left\{
	\begin{aligned}
		&ik(U_s-c)\hat{\rho}+\dv(\hat{u},\hat{v})=0,~~y\in (-1,1),\\
		&-{\eps}\Delta_k \hat{u}-\lambda ik{\eps}\dv(\hat{u},\hat{v})+ik(U_s-c)\hat{u}+(ik m^{-2}+{\eps}U_s'')\hat{\rho}+\hat{v}U_s'=0,\\
		&-{\eps}\Delta_k \hat{v}-\lambda {\eps}\pa_y\dv(\hat{u},\hat{v})+ik(U_s-c)\hat{v}+m^{-2}\pa_y\hat{\rho}=0,
	\end{aligned}
	\right.
\end{equation}
with no-slip boundary conditions on two boundaries $y=\pm 1$
\begin{align}\label{1.4-1}
	\hat{u}|_{y=\pm 1}=\hat{v}|_{y=\pm 1}=0.
\end{align}
Here, in \eqref{1.4}, 
$\Delta_k =(\pa_y^2-k^2)$ and	
$\dv(u,v)=ik u+\pa_y v$  denote the Fourier transform of Laplacian and divergence operators respectively. For convenience, we denote by $\CL(\hat{\rho},\hat{u},\hat{v})$ the linear operator \eqref{1.4}. 

The stability analysis relies on the solvability of the eigenvalue problem \eqref{1.4} with boundary conditions \eqref{1.4-1}. In fact, 
if for some complex number $c\in \mathbb{C}$ with $\text{Im}c>0$ and frequency $k\in \mathbb{R}$, the ODE system \eqref{1.4} with \eqref{1.4-1} has a non-zero solution, then the Poiseuille flow \eqref{1.1.1} is spectral unstable. Otherwise, it is spectral stable.

In this paper we concentrate on the case when the background profile is the planar shear flow that satisfies \eqref{as}, and the main result is stated in the following theorem. 
\begin{theorem}\label{thm1.1}
	Let Mach number $m\in\left(0,\frac{1}{\sqrt{3}}\right)$. For sufficiently small $0<\eps\ll 1$,  there exist  frequency $k\sim\epsilon^\frac17$ and wave number $c\in\mathbb{C}$ with $Imc\sim\epsilon^\frac27$, such that the linearized compressible Navier-Stokes system admits a solution in the form of
	$$(\rho,u,v)(t,x,y)=e^{ik(x-ct)}(\hat{\rho}.\hat{u},\hat{v})(y).$$
\end{theorem}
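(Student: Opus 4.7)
The plan is to construct the unstable eigenmode of the system \eqref{1.4}-\eqref{1.4-1} by building four independent solutions -- two ``slow'' Rayleigh-type modes and two ``fast'' Airy-type correctors -- and then enforcing the four no-slip boundary conditions to obtain a dispersion equation $F(c,k,\eps)=0$ whose unstable roots I analyze. Following the quasi-compressible--Stokes iteration of \cite{YZ}, I would first discard the highest-order viscous and compressible terms to obtain a quasi-compressible (Orr-Sommerfeld-type) system; its kernel admits a regular slow mode $\phi_s$, obtained by perturbing the $k=0$ Rayleigh solution $U_s(y)-c$ in powers of $k^2$ and $m^2$, and a rapidly-varying fast mode $\phi_f$ concentrated in a critical layer of width $\delta=(k\eps U_s'(y_c))^{-1/3}$ around the point $y_c$ where $U_s(y_c)=c$, modelled on the Langer--Airy function. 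The Stokes iteration of \cite{YZ} then promotes each quasi-compressible profile into a genuine solution of the full compressible system, introducing only higher-order remainders.

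In the long-wave regime $k\sim\eps^{1/7}$, $c\sim\eps^{2/7}$ of the statement, I would localize two slow Rayleigh modes near the walls $y=\pm 1$ and two fast Airy modes in their respective critical layers, then form the $4\times 4$ matrix whose entries are the values of $(\hat u,\hat v)$ at $y=\pm 1$ for these four modes. Setting its determinant to zero and expanding asymptotically in $\eps$, the inviscid part near each wall produces the classical ratio $-c/U_s'(\pm 1)$ with a compressibility correction of size $m^2 c^2$, while the viscous part contributes a Tietjens-type transcendental function of the argument $Z_\pm=\pm c(k\eps|U_s'(\pm 1)|)^{-2/3}$. Balancing these two contributions selects precisely the scaling $k\sim\eps^{1/7}$, $c\sim\eps^{2/7}$ and reduces the dispersion relation, at leading order, to a scalar equation of Tollmien--Schlichting type.

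The main obstacle is to secure a root with $\mathrm{Im}\,c>0$ in the compressible, non-symmetric setting. Since $U_s$ has no parity, the even/odd reduction used in \cite{GGN2} is unavailable, and the two walls must be handled simultaneously through genuinely global slow-mode solutions; moreover, the compressibility terms modify the dispersion relation at subleading order through the pressure-density coupling inherent in \eqref{1.4}. I would formulate the problem as a fixed-point equation $c=c_0(k)+\mathcal{R}(c,k,\eps)$, where $c_0$ is the explicit leading root of the reduced Tollmien--Schlichting equation and $\mathcal{R}$ collects the higher-order remainders coming from the Stokes iteration, the compressibility corrections, and the exponentially small cross-wall Airy tails. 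The condition $m<1/\sqrt{3}$ is expected to enter here as the range in which the compressible coefficient (likely of the form $1-3m^2$ times a geometric factor) keeps a definite sign, so that $c_0$ lies in the upper half-plane and is not cancelled by the correction. A contraction-mapping argument in a small complex disk of radius $\sim\eps^{2/7}$ centered at $c_0$ then yields the desired unstable $c$, completing the construction of the growing eigenmode claimed in Theorem \ref{thm1.1}.
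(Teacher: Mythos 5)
Your overall plan---two slow Rayleigh/Lees--Lin modes, two fast Airy-type wall layers, the quasi-compressible--Stokes iteration to promote them to exact solutions of \eqref{1.4}, and a dispersion relation solved by a perturbative root-finding argument in a disk of radius $\sim\eps^{2/7}$---is essentially the paper's strategy. There is, however, one genuine and substantive error, and a couple of smaller inaccuracies worth flagging.

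The real gap is your explanation of the restriction $m<1/\sqrt{3}$. You predict that it guarantees that the leading root $c_0$ of the reduced Tollmien--Schlichting equation lies in the upper half-plane, i.e.\ that a factor of the form $1-3m^2$ controls the sign of $\mathrm{Im}\,c_0$. That is not what happens. The leading-order dispersion relation (see \eqref{4.20}--\eqref{c0}) produces $\mathrm{Im}\,c_0>0$ \emph{automatically}, through the phase $e^{i\pi/4}$ inherited from the Airy asymptotics $\mathrm{Ai}(1,\tilde z_0)/\mathrm{Ai}(2,\tilde z_0)\sim-\tilde z_0^{1/2}$; the Mach number does not appear in $c_0$ at all. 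The condition $m<1/\sqrt{3}$ is instead required for the resolvent estimates that control the remainders in the quasi-compressible--Stokes iteration. Concretely, in the proof of Lemma~\ref{lem3.1} one multiplies the Orr--Sommerfeld-type equation by $-w\,\overline{\Lambda(\Phi)}$ with the weight $w=-[\partial_y(A^{-1}\partial_y U_s)]^{-1}$, and the key coercivity input (Lemma~\ref{lemA2}) is that $w_0-U_s w_1\sim 1-3m^2U_s^2$ has a strictly positive lower bound; this fails as soon as $m\ge 1/\sqrt{3}$ since $\max U_s=1$. Without this positivity the imaginary part of the $I_2$ term in \eqref{A1.2} no longer delivers the lower bound $\gtrsim\eps^{2/7}\|\Lambda(\Phi)\|^2_{L^2}$, and Proposition~\ref{prop1} --- hence the entire remainder construction --- breaks down. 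So your proof sketch misplaces the Mach number restriction at the last step, where it is not needed, instead of in the resolvent estimate, where it is essential.

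Two smaller inaccuracies: first, the slow modes $\varphi^s_\pm$ are \emph{global} solutions on $(-1,1)$ of the Lees--Lin equation (with $\varphi_+\approx U_s-c$ regular and $\varphi_-$ carrying logarithmic singularities as $\mathrm{Im}\,c\to 0$), not two Rayleigh modes ``localized near the walls''; the localization lives entirely in the Airy layers. Second, your scale $\delta=(k\eps U_s'(y_c))^{-1/3}$ is the reciprocal of the correct quantity: the paper uses $\delta\sim\big(\eps/(kU_s'(\pm 1))\big)^{1/3}\sim\eps^{2/7}$, see \eqref{2.3.3-1} and \eqref{d1}. These are fixable slips, but the misattribution of the role of $m<1/\sqrt{3}$ is the point at which your argument, as written, would not close.
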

We have several remarks on the above result.

\begin{remark}
	Unlike the case of boundary layer profile \cite{GGN1,YZ}, for the planar Poiseuille flow, the 
	unstable mode localizes at low frequency $k\sim\epsilon^\frac17$  rather than in a high-frequency regime, and it grows exponentially in time like  $e^{t kImc }\sim e^{\epsilon^\frac37 t}$. Note that the slow growth is consistent with the linear inviscid stability for Poiseuille flow, see Lees-Lin's criterion \cite{LL}.
\end{remark}

\begin{remark}
	In the work \cite{KN} by Kagei-Nishida, the instability of planar Poiseuille flows is shown in the high Mach number regime, which is different from the regime studied in this paper. Even though  instabilities investigated in both papers occur at low frequencies, the mechanisms are different. In \cite{KN}, the unstable eigenmode bifurcates from the zero eigenvalue of hyperbolic part of linearized operator at zero frequency, 
	while in this paper, the instability arises from the interaction between  the inviscid perturbation and boundary layers.
\end{remark}

\begin{remark}
	The dispersion relation  is derived from full boundary conditions \eqref{1.4-1}, instead of building upon symmetry assumptions on background flows and perturbations as in \cite{GGN2}. Thus, the analysis can be used for more generic shear flows without symmetry.
\end{remark}
\begin{remark}
	The above theorem is proved in the subsonic regime, covering the incompressible case when the Mach number is $0$. Note that the subsonic condition is essential because the ellipticity of  compressible Orr-Sommerfeld equation does not hold when the Mach number $m\ge 1$. Hence, 
	how to establish mathematical theory for the cases in transonic and sonic regimes remains very challenging and unsolved.
\end{remark}

Now we briefly present the ideas and strategies for the proof of main theorem. To match the full boundary conditions in \eqref{1.4-1}, we need to construct four independent solutions to the eigenvalue problem \eqref{1.4}.  As in the case of boundary layer profiles \cite{GGN1,YZ} and the case of Poiseuille flow under incompressible perturbations \cite{GGN2}, the first step is to construct approximate solutions consisting of the inviscid and viscous parts. The inviscid component is constructed by using the Rayleigh approximate operator in the low-frequency regime,  usually called the slow mode. In constrast, the viscous component takes effect in a region close to the two boundaries $y=\pm 1$ to capture  interactions between the viscosity and no-slip boundary conditions.  

The next step is to construct four exact solutions to \eqref{1.4} near these approximations or equivalently to solve the remainder system \eqref{3.1}. The difficulty in analysis comes from the lack of stream function expression in \eqref{1.4}, so the resolvent problem \eqref{3.1.1} can not be  reduced to the Orr-Sommerfeld equation as in the incompressible case. To overcome this difficulty, we apply the quasi-compressible-Stokes iteration introduced in \cite{YZ} for studying  boundary layers to the  study of flows in the channel. Roughly speaking, we introduce the quasi-compressible operator, where the intrinsic effective stream function can be defined by considering the compressibility of fluids. The Orr-Sommerfeld type equation can be derived accordingly. On the other hand, to recover the loss of
regularity due to the quasi-compressible approximation, the Stokes operator is used to capture the elliptic
structure of the system. The convergence of  iteration scheme will be presented in Section 3.3. 
The proof is based on energy estimates, which is different from the Green function approach used in \cite{GGN2}. The concavity of  Poiseuille flow plays an important role in analysis.

Finally, in Section 4, we solve the dispersion relation for the instability of Poiseuille flow. We remark that the dispersion relation is more complicated than the one studied in \cite{YZ} for the instability of boundary layer, because, for latter case, one can choose the slow and fast modes decaying at infinity so that far-field conditions can be automatically satisfied. In this paper, to remove the  symmetry assumptions made in \cite{GGN2}, the dispersion relation is derived from full boundary conditions \eqref{1.4-1}: we look for the solution to eigenvalue problem \eqref{1.4} in the following form
\begin{align}
	(\rho,u,v)=(\rho_+^s,u_+^s,v_+^s)+\a_1(\rho_-^s,u_-^s,v_-^s)+\a_2(\rho_-^f,u_-^f,v_-^f)+\a_3(\rho_+^f,u_+^f,v_+^f),\nonumber
\end{align}
where $(\rho_\pm^s,u_\pm^s,v_\pm^s)$ are slow modes and $(\rho_\pm^f,u_\pm^f,v_\pm^f)$ are fast modes that involve boundary layer structures near $y=\pm1$ respectively, and constants $\a_1$, $\a_2$, $\a_3$ are chosen such that the following three boundary conditions are fulfilled: $v(-1)=v(1)=u(-1)=0$. 
In this way, the zero point problem for the $4\times 4$ matrix of dispersion relation is reduced into a single equation for  matching the last boundary condition $u(1)=0.$ Inspired by  \cite{DDGM,DG}, we solve this  by applying the Rouch\'e theorem.

The rest of the paper is organized as follows. In the next section, we will construct the approximate solutions consisting of the inviscid modes and boundary layers corresponding to the inviscid and viscous effects respectively. Then, in the third section, the remainder of these approximate solutions are estimated by studying the resolvent problem. The quasi-compressible and Stokes iteration is used there. The dispersion relation for the instability is solved in Section 4. Finally, we list several useful estimates in the Appendices.

{\it Regime of parameters:} For the sake of readers' convenience, we list the regimes of the parameters considered in this paper. Let $\eps$ be the viscosity coefficient which is suffciently small. We set the tangential frequency $k=T_0\eps^{\f17}$ where the constant $T_0\gg1 $ is sufficiently large and of $O(1)$ with respect to $\eps.$ We also define the rescaled frequency $n\eqdef k/\eps=T_0\eps^{-\f67}$. The wave speed $c=\text{Re}c+i\text{Im}c$ and satisfies
\begin{align}\label{c}
	\text{Re}c\approx T_0^2\eps^{\f27},\text{ and } \text{Im}c\approx T_0^{-\f32}\eps^{\f27}.
\end{align} 
The precise range of $c$  is given in \eqref{cd}.

{\it Notations:} Throughout the paper, $C$ denotes a positive constant independent of $\eps$ and may vary from line to line. $A\lesssim B$ and $A=O(1)B$ mean that there exists a positive constant $C$, not depending on $\eps$, such that $A\leq CB$. Similarly, $A\gtrsim B$ means $A\geq C B$ for some positive constant $C$. $A=O(1)\eps^{\infty}$ means $A\leq C_N\eps^{N}$ for any positive integer $N$. We use the notation $A\approx B$ if $A\lesssim B$ and $A\gtrsim B.$ We denote by $\|\cdot\|_{L^2}$ and $\|\cdot\|_{L^\infty}$ respectively the standard $L^2$ and $L^\infty$ norm in the interval $(-1,1).$ For any non-negative integer $s$ and $p\in [1,\infty]$,  the standard Sobolev space $W^{s,p}=\{f \in L^p(-1,1)\mid \pa_y^jf \in L^p(-1,1),~j=1,2,\cdots, s\}$
are used. In particular, $H^s\eqdef W^{s,2}$.

\section{Approximate solutions}

The approximate solutions  consisting of inviscid approximations and boundary layers will be given in the following two subsections. The estimates on errors generated by these approximations will be given in Section 2.3.

\subsection{Inviscid solutions}
We start from the following inviscid system corresponding to  \eqref{1.4}
\begin{equation}
	\left\{
	\begin{aligned}\label{2.1.1}
		&ik(U_s-c)\rho+\dv(u,v)=0,~~y\in (-1,1),\\
		&ik(U_s-c)u+m^{-2}ik\rho+vU_s'=0,\\
		&ik(U_s-c)v+m^{-2}\pa_y\rho=0.
	\end{aligned}\right.
\end{equation}
As in \cite{LL,YZ}, the solution $(\rho,u,v)$ to the inviscid system \eqref{2.1.1} can be represented by
using a new function $\varphi=\f{i}{k}v$. In fact,  the first equation in \eqref{2.1.1} gives
\begin{align}\label{2.1.2}
	u=\pa_y\varphi-(U_s-c)\rho.
\end{align}
Substituting this into the second equation of \eqref{2.1.1}, we can obtain the following relation between $\rho$ and $\varphi$:
\begin{align}
	-m^{-2}A(y)\rho=(U_s-c)\pa_y\varphi-\varphi U_s',\nonumber
\end{align}
where 
\begin{align}\label{2.1.4}
	A(y)\eqdef 1-m^2(U_s-c)^2.
\end{align}
Note that for the Poiseuille flow in the {\it subsonic} regime, that is, $m\in (0,1)$, $A(y)$ is invertible when the module $|c|$ is sufficiently small. Thus, we can represent $\rho$ in terms of $\varphi$ as follows:
\begin{align}\label{2.1.3}
	\rho=-m^2A^{-1}(y)\left[ (U_s-c)\pa_y\varphi-\varphi U_s'\right].
\end{align}
Plugging \eqref{2.1.3} into the third line of \eqref{2.1.1}, we derive the following Lees-Lin equation (Rayleigh's type) for $\varphi$, cf. \cite{LL,YZ}:
\begin{align}\label{2.1.5}
	\text{Ray}_{\text{CNS}}(\varphi)\eqdef\pa_y\left\{ A^{-1}\left[ (U_s-c)\pa_y\varphi-U_s'\varphi \right]\right\}-k^2(U_s-c)\varphi=0,~~y\in(-1,1).
\end{align}

Compared with the classical Lees-Lin equation around the Prandtl boundary layer profile studied in \cite{YZ}, here  we need to analyze \eqref{2.1.5} in a finite interval $(-1,1)$ so that the boundary conditions for both $y=1$ and $y=-1$ need to be taken into account. For this,  we construct two independent approximate solutions to the Lees-Lin equation \eqref{2.1.5} at low frequency $k\ll 1$. We start from $k=0$. In this case, the equation \eqref{2.1.5} reduces to
\begin{align}
	\pa_y\left\{ A^{-1}\left[ (U_s-c)\pa_y\varphi-U_s'\varphi \right]\right\}=0,~y\in (-1,1),\nonumber
\end{align}
and it
admits following two independent solutions
\begin{align}
	\varphi_{+}(y)&=U_s(y)-c,\label{2.1.5-2}\\
	\varphi_-(y)&=\left(U_s(y)-c\right)\int_0^y \frac{1}{(U_s(x)-c)^2}\dd x-m^2\left(U_s(y)-c\right)y.
	\label{2.1.5-1}
\end{align}
Based on $\varphi_{\pm}$, we construct two approximate solutions to \eqref{2.1.5} for $k\ll1$.

Define the approximate Green's function:
\begin{equation}
G(x,y)=-(U_s(x)-c)^{-1} 
	\begin{cases}
	\varphi_+ (y) \varphi_- (x) , ~x<y,\\
	\varphi_+ (x) \varphi_- (y) , ~x>y,
	\end{cases}\nonumber
\end{equation}
and two correctors at $k^2$-order:
\begin{equation}
\varphi_{\pm, k}(y)\eqdef \int^1_{-1}G(x,y)(U_s(x)-c)\varphi_\pm (x)\dd x.\label{2.A.0}
\end{equation}
Set 
\begin{equation}
\varphi^s_\pm =\varphi_\pm+k^2\varphi_{\pm, k}.\label{2.A.1}
\end{equation}
Plugging \eqref{2.A.1} into \eqref{2.1.5} gives
\begin{equation}
\text{Ray}_{\text{CNS}}(\varphi^s_\pm)=-k^4(U_s-c)\varphi_{\pm,k}.\label{2.A.2}
\end{equation}
Thus, $\varphi^s_\pm$ solves the Lees-Lin equation (2.1.5) up to $O(k^4)$.

For later use, in the following lemma we summarize the boundary values  of $\varphi^s_\pm$ at $y=\pm1$.
\begin{lemma}\label{lem2.1}
	There exists a positive constant $\gamma_1$, such that for $k\leq\gamma_1$ and $|c|\leq\gamma_1$, the boundary values of $\varphi^s_\pm$ have the following asymptotic behavior: 
	\begin{align}
		\varphi^s_+(-1)&=-c+k^2\frac{1}{U^\prime_s(-1)}\int^1_{-1}U^2_s(x)\dd x+O(1)
k^2|c\log{Im c}|,\label{2.2.2}\\
		\partial_y \varphi^s_+(-1)&=U^\prime_s(-1)+O(1)k^2|\log{Im c}|,\label{2.2.3}\\
		\varphi^s_+(1)&=-c\left(1+O(1)k^2\right),\label{2.2.4}\\
		\partial_y\varphi^s_+(1)&=U^\prime_s(1)+O(1)k^2,\label{2.2.5}\\
		\varphi^s_-(\pm1)&=-\frac{1}{U^\prime_s(\pm1)}+O(1)\bigg(|c\log{Im c}|+k^2\bigg),\label{2.2.6}\\
		\partial_y\varphi^s_-(\pm1)&=O(1)|\log{Im c}|.\label{2.2.8}
	\end{align}
\end{lemma}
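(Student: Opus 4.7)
The plan is to start from the explicit form $\varphi^s_\pm = \varphi_\pm + k^2\varphi_{\pm,k}$ in \eqref{2.A.1} and handle separately the inviscid modes $\varphi_\pm$ and the $k^2$-level correctors. For $\varphi_+(y)=U_s(y)-c$ everything is immediate from $U_s(\pm 1)=0$: one reads off $\varphi_+(\pm 1)=-c$ and $\partial_y\varphi_+(\pm 1)=U_s'(\pm 1)$. The arithmetic heart of the lemma is the analysis of $\varphi_-$, which reduces to asymptotic evaluation of the singular integral
\begin{equation*}
\CJ(y)\eqdef\int_0^y \frac{\dd x}{(U_s(x)-c)^2},\qquad y\in(-1,1).
\end{equation*}
Since $U_s''<0$, $U_s$ has a unique interior critical point $y_*$ and is strictly monotone on each of $(-1,y_*)$ and $(y_*,1)$, so near $x=\pm 1$ I change variables $u=U_s(x)-c$ and expand $1/U_s'(x(u))=1/U_s'(\pm 1)+O(u)$. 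Explicit integration of the principal piece yields
\begin{equation*}
\CJ(\pm 1)=\frac{1}{cU_s'(\pm 1)}+O(|\log\rmim c|),
\end{equation*}
the logarithm coming from the leftover $O(1/u)$ integrand after one cancellation, while the bulk $x$-range, on which $U_s-c$ is bounded below, contributes only $O(1)$.

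Feeding $\CJ$ back into \eqref{2.1.5-1} gives $\varphi_-(\pm 1)=-c\CJ(\pm 1)\pm m^2 c=-1/U_s'(\pm 1)+O(|c\log\rmim c|)$, proving \eqref{2.2.6}. Differentiating \eqref{2.1.5-1} yields
\begin{equation*}
\partial_y\varphi_-(y)=U_s'(y)\CJ(y)+\frac{1}{U_s(y)-c}-m^2U_s'(y)y-m^2(U_s(y)-c),
\end{equation*}
and at $y=\pm 1$ the singular term $U_s'(\pm 1)\CJ(\pm 1)=\tfrac{1}{c}+O(|\log\rmim c|)$ is exactly cancelled by $\frac{1}{U_s(\pm 1)-c}=-\tfrac{1}{c}$, leaving $\partial_y\varphi_-(\pm 1)=O(|\log\rmim c|)$, which is \eqref{2.2.8}.

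For the $k^2$-correctors I split \eqref{2.A.0} at $x=y$ using the explicit form of $G$:
\begin{equation*}
\varphi_{+,k}(y)=-\varphi_+(y)\int_{-1}^y\varphi_-(x)\varphi_+(x)\,\dd x-\varphi_-(y)\int_y^1\varphi_+^2(x)\,\dd x,
\end{equation*}
and analogously for $\varphi_{-,k}$. At $y=\pm 1$ exactly one integral collapses, giving $\varphi_{+,k}(-1)=-\varphi_-(-1)\int_{-1}^1(U_s-c)^2\,\dd x$ and $\varphi_{+,k}(1)=c\int_{-1}^1\varphi_-\varphi_+\,\dd x$. Inserting the boundary values of $\varphi_-(\pm 1)$ computed above, together with the $O(1)$ size of the surviving integrals (with $\int_{-1}^1(U_s-c)^2\,\dd x=\int_{-1}^1 U_s^2\,\dd x+O(c)$), delivers the $k^2$-terms in \eqref{2.2.2} and \eqref{2.2.4}. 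Differentiating the split representation, the diagonal boundary terms cancel by design of the Green's function, leaving
\begin{equation*}
\partial_y\varphi_{+,k}(y)=-\partial_y\varphi_+(y)\int_{-1}^y\varphi_-\varphi_+\,\dd x-\partial_y\varphi_-(y)\int_y^1\varphi_+^2\,\dd x.
\end{equation*}
At $y=1$ only the first term survives and is bounded by $|U_s'(1)|\cdot O(1)$, yielding \eqref{2.2.5}; at $y=-1$ only the second survives, and the bound $\partial_y\varphi_-(-1)=O(|\log\rmim c|)$ from the previous step produces the log-enhanced error in \eqref{2.2.3}.

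The only substantive obstacle is the endpoint asymptotics of $\CJ$: one must cleanly isolate the $1/c$ pole generated at $x=\pm 1$ from the logarithmic contribution of the critical layer $U_s=\rmre c$ (which in the present regime sits close to $\pm 1$) and exclude any other unbounded terms. The sign assumption $U_s''<0$ from \eqref{as} is essential in two places: it makes the change of variables $u=U_s(x)-c$ globally well-defined on each monotonicity branch, and it forces the critical layers to sit near $x=\pm 1$ rather than in the interior. Once $\CJ(\pm 1)$ is controlled, every subsequent step is algebra with explicitly bounded quantities, and all six estimates in Lemma~\ref{lem2.1} follow by direct substitution.
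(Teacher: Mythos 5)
Your proof is correct and follows essentially the same route as the paper: evaluate the explicit split form \eqref{2.2.1}, \eqref{2.2.10} of $\varphi^s_\pm$ and its derivative at $y=\pm 1$, and reduce everything to the endpoint asymptotics of the singular integral $\int_0^y(U_s-c)^{-2}\,\dd x$, which the paper carries out in Lemma~\ref{lemA1} via the same change of variables and integration by parts that you perform inline on $\CJ$. The only cosmetic differences are that you package the $1/c$-pole cancellation in $\partial_y\varphi_-$ explicitly, whereas the paper organizes this as a separate Appendix lemma and cites \eqref{ap1}--\eqref{ap2}, and that you implicitly invoke $\|\varphi_-\|_{L^\infty}=O(1)$ (the paper's \eqref{ap13}) for the $O(1)$ bounds on the surviving integrals.
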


\begin{proof}
Consider $\varphi_{+}^s$ first. By the definition of $\varphi_+^s$ in  \eqref{2.A.1} and using \eqref{2.A.0}, we obtain the following explicit formula:
\begin{equation}
	\varphi^s_+(y)=\varphi_+(y)-k^2\varphi_+(y)\bigg(\int^y_{-1}\varphi_+(x)\varphi_-(x)\dd x\bigg)-k^2\varphi_-(y)\bigg(\int^1_y\varphi_+^2(x)\dd x\bigg).\label{2.2.1}
	\end{equation}
Evaluating \eqref{2.2.1} at $y=-1$ and using the boundary value $\varphi_-(-1)$ given in \eqref{ap1}, it holds that
\begin{align}
	\varphi^s_+(-1)&=-c-k^2\varphi_-(-1)\bigg(\int^1_{-1}(U_s(x)-c)^2\dd x\bigg)\nonumber\\
	&=-c-k^2\bigg(\frac{-1}{U^\prime_s(-1)}\int^1_{-1}U^2_s(x)\dd x+O(1)|c\log\text{Im }c|\bigg)\nonumber\\
	&=-c+\frac{k^2}{U^\prime_s(-1)}\int^1_{-1}U^2_s(x)\dd x+O(1)k^2|c\log\text{Im }c|,\nonumber
\end{align}
which is \eqref{2.2.2}.
Similarly, we evaluate $\varphi_+^s(y)$ at $y=1$ to obtain
\begin{align}
	\varphi^s_+(1)&=-c+ck^2\bigg[\int^1_{-1}(U_s(x)-c)\varphi_-(x)\dd x\bigg]\nonumber\\
	&=-c\left(1+O(1)k^2\right).\nonumber
\end{align}
Here in the last line we have used the inequality
$\left|\int_{-1}^1(U_s-c)\varphi_-(x)\dd x\right|\leq C\|\varphi_-\|_{L^\infty}\leq C,
$
with the aid of \eqref{ap13}. Thus, \eqref{2.2.4} follows.

Next we compute boundary values of $\pa_y\varphi_{+}^s$ at $y=\pm 1$. Differentiate \eqref{2.2.1} to obtain
\begin{align}
	\partial_y\varphi^s_+(y)=U'_s(y)-k^2U^\prime_s(y)\int^y_{-1}(U_s(x)-c)\varphi_-(x)\dd x-k^2\varphi_-'(y)\int^1_y(U_s(x)-c)^2\dd x.\label{2.2.1-1}
\end{align}
Evaluating \eqref{2.2.1-1} at $y=-1$ and using \eqref{ap2} in Lemma \ref{lemA1}, we deduce that
\begin{align}
	\partial_y\varphi^s_+(-1)&=U^\prime_s(-1)-k^2\varphi'_-(-1)\bigg(\int^1_{-1}(U_s(x)-c)^2\dd x\bigg)\nonumber\\
	&=U^\prime_s(-1)+O(1)k^2|\log\text{Im} c|,\nonumber
\end{align}
which yields \eqref{2.2.3}. Similarly, we can compute
\begin{align}
	\partial_y\varphi^s_+(1)&=U^\prime_s(1)-k^2U^\prime_s(1)\left(\int^1_{-1}(U_s(x)-c)\varphi_-(x)\dd x\right)\nonumber\\
	&=U^\prime_s(1)-O(1)k^2,\nonumber
\end{align}
which is \eqref{2.2.5}. Therefore, we have completed estimates on boundary data of $\varphi^s_+$.
	
Now we turn to consider $\varphi^s_-$. By definition \eqref{2.A.1}, it holds that
\begin{align}
	\varphi^s_-(y)=\varphi_-(y)-k^2\bigg(\int^y_{-1}\varphi^2_-(x)\dd x\bigg)\varphi_+(y)-k^2\bigg(\int^1_y\varphi_+(x)\varphi_-(x)\dd x\bigg)\varphi_-(y).\label{2.2.10}
\end{align}
Then evaluating $\varphi_{-}^s$ at $y=-1$ and using \eqref{ap1} again, we have
\begin{align}
	\varphi^s_-(-1)&=\varphi_-(-1)\left(1-k^2\int^1_{-1}\varphi_+(x)\varphi_-(x)\dd x\right)\nonumber\\
	&=\bigg(\frac{-1}{U^\prime_s(-1)}+O(1)|c\log\text{Im} c|\bigg)\left(1+O(1)k^2\right)\nonumber\\
	&=\frac{-1}{U^\prime_s(-1)}+O(1)\left(|c\log{\text{Im} c}|+k^2\right).\label{2.2.11-1}
\end{align}
Similarly, the value of $\varphi_-^s(1)$ can be computed as follows.
\begin{align}
	\varphi^s_-(1)&=\varphi_-(1)-k^2\bigg(\int^1_{-1}\varphi^2_-(x)\dd x\bigg)\varphi_+(1)\nonumber\\
	&
	=\frac{-1}{U_s'(1)}+O(1)\left(|c\log\text{Im}c|+k^2\right).\label{2.2.11-2}
\end{align}
Combining \eqref{2.2.11-1} and \eqref{2.2.11-2} together yields  \eqref{2.2.6}.

Finally we estimate the boundary values of derivative $\partial_y\varphi^s_-$.
Differentiating \eqref{2.2.10}, we obtain
\begin{align}\label{2.2.13}
	\partial_y\varphi^s_-(y)=\varphi^\prime_-(y)-k^2\bigg(\int^y_{-1}\varphi^2_-(x)\dd x\bigg)\varphi^\prime_+(y)-k^2\bigg(\int^1_y\varphi_+(x)\varphi_-(x)\dd x\bigg)\varphi^\prime_-(y).
\end{align}
Evaluating \eqref{2.2.13} at $y=-1$ and using Lemma \ref{lemA1}, we have
\begin{align}
	\partial_y\varphi^s_-(-1)&=\varphi^\prime_-(-1)\bigg(1-k^2\int^1_{-1}\varphi_-(x)\varphi_+(x)\dd x\bigg)=O(1)|\log\text{Im} c|.\nonumber
\end{align}
Similarly, we have
\begin{align}
	\partial_y\varphi^s_-(1)&=\varphi^\prime_-(1)-k^2\bigg(\int^1_{-1}\varphi^2_-(x)\dd x\bigg)U^\prime_s(1)
	=O(1)|\log\text{Im}c|.\nonumber
\end{align}
Combining these two equalities gives \eqref{2.2.8}. Therefore, we have shown \eqref{2.2.2}-\eqref{2.2.8}. The proof of Lemma \ref{lem2.1} is completed.
\end{proof}
The following estimates on $\varphi_{\pm}^s$ can be derived 
from  explicit formula in \eqref{2.1.5-2}-\eqref{2.A.1} and bounds on $\varphi_-$ given in Lemma \ref{lemA3}.
\begin{lemma}\label{lem2.3}
Let parameters $k=T_0\eps^{\f17}$ and $c$ satisfies \eqref{c}. The approximate Lees-Lin solutions $\varphi_\pm^s$ satisfy the following uniform estimates with respect to $\epsilon$.
	\begin{align}
	&\|\varphi_+^s\|_{W^{2,\infty}}+\eps^{\f17}\|\pa_y^3\varphi_+^s\|_{L^2}+\eps^{\f27}\|\pa_y^3\varphi_+^s\|_{L^\infty}\leq C,\label{2.2.31}\\
	&	\|\varphi_{-}^s\|_{L^\infty}+ \frac{\|\pa_y\varphi_-^s\|_{L^\infty}}{\left|\log \eps\right|}+\|\pa_y\varphi_-^s\|_{L^2}+\sum_{j=2}^4\eps^{\f{2j-3}{7}}\|\pa_y^j\varphi_-^s\|_{L^2}+\sum_{j=2}^4\eps^{\f{2(j-1)}{7}}\|\pa_y^j\varphi_-^s\|_{L^\infty}\leq  C.\label{2.2.30}
	\end{align}
\end{lemma}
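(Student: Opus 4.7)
The plan is to use the explicit representation $\varphi^s_\pm = \varphi_\pm + k^2\varphi_{\pm,k}$ from \eqref{2.A.1} together with the integral formula \eqref{2.A.0} for the correctors, and to reduce all bounds to pointwise and $L^p$ estimates on $\varphi_\pm$ and their derivatives as provided by Lemma \ref{lemA3}. Throughout, the interplay between the two scales $k^2\approx \epsilon^{2/7}$ and $|U_s-c|\gtrsim |\text{Im}\,c|\approx \epsilon^{2/7}$ is what makes the weighted norms in the statement close.

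For $\varphi^s_+$, since $\varphi_+ = U_s - c$ is smooth with $W^{n,\infty}$-norms uniform in $\epsilon$, only the corrector $k^2\varphi_{+,k}$ needs work. Differentiating \eqref{2.2.1} up to three times in $y$ produces, schematically, $k^2\partial_y^a\varphi_+\cdot J_1(y) + k^2\partial_y^a\varphi_-\cdot J_2(y)$, where the $J_i$ are boundary values or running integrals of products of $\varphi_+, \varphi_-$, all $O(1)$ by Lemma \ref{lemA3} (using $\varphi_+\in L^\infty$ and $\varphi_-\in L^2$). The worst contribution to $\partial_y^3\varphi^s_+$ is $k^2\partial_y^3\varphi_-$, whose $L^2$ and $L^\infty$ norms are bounded by $\epsilon^{-3/7}$ and $\epsilon^{-4/7}$; multiplying by the stated weights $\epsilon^{1/7}, \epsilon^{2/7}$ and by $k^2\approx\epsilon^{2/7}$ yields $O(1)$ in each case, closing the $\varphi_+^s$ bound \eqref{2.2.31}.

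For $\varphi^s_-$, the formula \eqref{2.2.10} shows $\varphi^s_- = \varphi_- + O(k^2)$ contributions; the correction terms are always two orders of $\epsilon^{1/7}$ better than the leading $\varphi_-$-bounds because of the $k^2$ prefactor, so it suffices to estimate $\partial_y^j\varphi_-$ for $j=1,\ldots,4$. Direct differentiation of the explicit formula \eqref{2.1.5-1} shows that $\partial_y\varphi_-$ contains the pair $U_s'(y)\int_0^y(U_s-c)^{-2}\,dx$ and $(U_s-c)^{-1}$ up to smooth terms, and an integration by parts converts the first summand into $-(U_s-c)^{-1}$ plus a log-singular remainder: this cancellation reduces the singularity of $\partial_y\varphi_-$ to order $|\log\text{Im}\,c|\sim|\log\epsilon|$, accounting for the weight $|\log\epsilon|^{-1}$ in the $L^\infty$-norm of $\partial_y\varphi_-^s$. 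Each subsequent derivative introduces at most one additional factor of $(U_s-c)^{-1}$, which in the critical layer $\{|U_s-\text{Re}\,c|\lesssim|\text{Im}\,c|\}$ of width $\sim\epsilon^{2/7}$ is bounded by $\epsilon^{-2/7}$ in $L^\infty$ and (after integration over the layer) by $\epsilon^{-1/7}$ in $L^2$. Iterating this bookkeeping reproduces precisely the $\epsilon^{2(j-1)/7}$ and $\epsilon^{(2j-3)/7}$ weights in \eqref{2.2.30}.

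The main obstacle is the sharp counting of singularities at the top orders $j=3,4$, since naive differentiation of \eqref{2.1.5-1} and \eqref{2.2.10} produces integrals involving $(U_s-c)^{-k}$ with $k$ up to $4$, whose behavior along a contour just above the critical point must be tracked carefully. This is exactly the content of Lemma \ref{lemA3}, whose bounds on the Wronskian-type integrals appearing in $\varphi_\pm$ are designed for this purpose; once those are available, the estimates \eqref{2.2.31}--\eqref{2.2.30} follow from triangle-inequality arguments on the explicit formulas above.
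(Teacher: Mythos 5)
Your proposal is correct and follows essentially the same route as the paper: write $\varphi^s_\pm = \varphi_\pm + k^2\varphi_{\pm,k}$, differentiate the explicit integral formulas \eqref{2.2.1} and \eqref{2.2.10}, and reduce everything to the $L^2$/$L^\infty$ bounds on $\partial_y^j\varphi_-$ from Lemma \ref{lemA3}, then verify that the $k^2\approx\epsilon^{2/7}$ prefactor together with $|\text{Im}\,c|\approx\epsilon^{2/7}$ closes each weighted norm. Your exponent bookkeeping matches the paper's (e.g.\ $\epsilon^{1/7}\cdot k^2\cdot|\text{Im}\,c|^{-3/2}=O(1)$ for the $L^2$ bound of $\partial_y^3\varphi_+^s$, and likewise for $j=2,3,4$ on the $\varphi_-^s$ side). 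The one difference is stylistic: where the paper simply cites \eqref{ap13} as a black box, you re-derive its salient features (the integration-by-parts cancellation that tames $\partial_y\varphi_-$ to a log singularity, and the critical-layer heuristic for the higher derivatives); this is really content of Lemma \ref{lemA3}'s proof rather than of Lemma \ref{lem2.3} itself, but it does not introduce any gap.
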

\begin{proof}
	First we estimate $\varphi_+^s$. Differentiating \eqref{2.2.1} yields
	\begin{equation}\label{2.2.31-1}
	\begin{aligned}
		\pa_y\varphi_+^s&=\varphi_+'-k^2\varphi_+'\left(\int_{-1}^y\varphi_+\varphi_-\dd x\right)-k^2\varphi'_-\left(\int_y^1\varphi_+^2\dd x\right),\\
		\pa_y^2\varphi_+^s&=\varphi''_+-k^2\varphi_+''\left(\int_{-1}^y\varphi_+\varphi_-\dd x\right)-k^2\varphi_-''\left(\int_y^1\varphi_+^2\dd x\right)-k^2\varphi'_+\varphi_+\varphi_-+k^2\varphi_-'\varphi_+^2,\\
		\pa_y^3\varphi_+^s&=\varphi_+'''-k^2\varphi_+'''\left(\int_{-1}^y\varphi_+\varphi_-\dd x\right)-k^2\varphi_-'''\left(\int_y^1\varphi_+^2\dd x\right)\\
		&\quad+2k^2\varphi_-''\varphi_+^2-2k^2\varphi_+''\varphi_+\varphi_-+k^2\varphi_+\varphi_+'\varphi_-' -k^2(\varphi_+')^2\varphi_-.
	\end{aligned}
\end{equation}
By the explicit formula \eqref{2.2.31-1}, using bounds $\|\pa_y^j\varphi_+\|_{L^\infty}\leq C$, for $ j=0,1,2,3,$ and \eqref{ap13} in Lemma \ref{lemA3},
we can obtain, for $k\approx \eps^{\f17}$ and $\text{Im}c\approx \eps^{\f27}$, that
\begin{align}
	\|\varphi_{+}^s\|_{W^{2,\infty}}&\leq C\left(1+k^2\|\varphi_-\|_{W^{2,\infty}}\right)\leq C\left(1+k^2|\text{Im}c|^{-1}\right)\leq C,\nonumber\\
		\|\pa_y^3\varphi_{+}^s\|_{L^2}&\leq C\left(1+k^2\|\varphi_-\|_{H^3}\right)\leq C\left(1+k^2|\text{Im}c|^{-\f32}\right)\leq C\eps^{-\f17},\nonumber\nonumber\\
			\|\pa_y^3\varphi_{+}^s\|_{L^\infty}&\leq C\left(1+k^2\|\varphi_-\|_{W^{3,\infty}}\right)\leq C\left(1+k^2|\text{Im}c|^{-2}\right)\leq C\eps^{-\f27}.\nonumber
\end{align}
Combining these inequalities together yields \eqref{2.2.31}.
Similarly, for $\varphi_-^s$, we differentiate \eqref{2.2.10} up to the  fourth order and deduce that
\begin{equation}\label{2.2.31-2}
	\begin{aligned}
		\pa_y\varphi_-^s&=\varphi_-'-k^2\left(\int_{-1}^y\varphi_-^2\dd x\right)\varphi_+'-k^2\left(\int_{y}^1\varphi_+\varphi_-\dd x\right)\varphi_-',\\
		\pa_y^2\varphi_-^s&=\varphi''_--k^2\varphi_+''\left(\int_{-1}^y\varphi_-^2\dd x\right)-k^2\varphi_-''\left(\int_y^1\varphi_+\varphi_-\dd x\right)-k^2\varphi'_+\varphi_-^2+k^2\varphi_-'\varphi_+\varphi_-,\\
		\pa_y^3\varphi_-^s&=\varphi_-'''-k^2\varphi_+'''\left(\int_{-1}^y\varphi_-^2\dd x\right)-k^2\varphi_-'''\left(\int_y^1\varphi_+\varphi_-\dd x\right)\\
		&\quad-2k^2\varphi_+''\varphi_-^2+2k^2\varphi_-''\varphi_+\varphi_-+k^2\varphi_+(\varphi_-')^2 -k^2\varphi_-'\varphi_+'\varphi_-,\\
		\pa_y^4\varphi_-^s&=\pa_y^4\varphi_--k^2\pa_y^4\varphi_+\left(\int_{-1}^y\varphi_-^2\dd x\right)-k^2\pa_y^4\varphi_-\left(\int_y^1\varphi_+\varphi_-\dd x\right)-3k^2\varphi_+'''\varphi_-^2+3k^2\varphi_-'''\varphi_+\varphi_-\\
		&\quad-5k^2\varphi_+''\varphi_-'\varphi_-+4k^2\varphi_-''\varphi_-'\varphi_++k^2\varphi_-''\varphi_+'\varphi_-.
	\end{aligned}
\end{equation}
Then from the explicit formula in \eqref{2.2.31-2}, it holds, for $j=0,1,2,3,4$, that
\begin{align}\label{2.2.31-3}
	\|\pa_y^j\varphi_-^s\|_{L^p}\leq C\left(\|\varphi_-\|_{W^{j,p}}+1\right),~p=2~ \text{or}~ \infty.
\end{align}
The estimate \eqref{2.2.30} follows from \eqref{2.2.31-3} and \eqref{ap13}. Therefore, the proof of Lemma \ref{lem2.3} is completed.
\end{proof}
Based on the approximate solutions $\varphi^s_{\pm}$ to the Lees-Lin equation \eqref{2.1.5}, we can define the following two inviscid modes: $\vec{\Xi}^s_{\pm,\text{app}}=(\rho^s_{\pm,,\text{app}},u^s_{\pm,,\text{app}},v^s_{\pm,,\text{app}})$, where
\begin{align}
	v^s_{\pm,\text{app}}&=-ik\varphi^s_{\pm},\label{2.2.14}\\
	\rho^s_{\pm,\text{app}}&=-m^2A^{-1}(y)\bigg[(U_s-c)\partial_y\varphi^s_{\pm}-\varphi^s_{\pm}U^\prime_s\bigg],\label{2.2.15}\\
	u^s_{\pm,\text{app}}&=\partial_y\varphi^s_{\pm}-(U_s-c)\rho^s_{\pm}.\label{2.2.16}
\end{align}
From \eqref{2.A.2}, we can see that $\vec{\Xi}_{\pm,\text{app}}^s$ solve the inviscid system \eqref{2.1.1} up to $k^4$.

From Lemma \ref{lem2.1}, the following asymptotic expansions of  boundary data for $u_{\pm,\text{app}}^s$ and $v_{\pm,\text{app}}^s$ are obtained.
\begin{lemma}\label{lem2.2}
Under the same assumption on $k$ and $c$ as in Lemma \ref{lem2.1}, it holds that
	\begin{align}
		v_{+,\text{app}}^s(-1)&=ik\left(c-\frac{k^2}{U_s'(-1)}\int_{-1}^1U_s^2(x)\dd x\right)+O(1)k^3|c\log\text{Im}c|,\label{2.2.17}\\
		v_{+,\text{app}}^s(1)&=ikc+O(1)k^3|c|, \label{2.2.18}\\
		v_{-,\text{app}}^s(\pm1)&=\frac{ik}{U_s'(\pm1)}+O(1)k\left(|c|+k^2\right)|\log\text{Im}c|,\label{2.2.19}\\
		u_{+,\text{app}}^{s}(\pm 1)&=U_s'(\pm 1)+O(1)\left(|c|+k^2\right)|\log \text{Im}c|,\label{2.2.21}\\
		u_{-,\text{app}}^s(\pm 1)&=O(1)|\log \text{Im}c|.\label{2.2.23}
	\end{align}
\begin{proof}
	With Lemma \ref{lem2.1} for the boundary values of $\varphi_\pm^s$, the expansions \eqref{2.2.17}-\eqref{2.2.19} for $v_{\pm,\text{app}}^s$ directly follow from \eqref{2.2.14}. Next we show \eqref{2.2.21} and \eqref{2.2.23}.  From Lemma \ref{lem2.1}, the relation \eqref{2.2.15} and boundary conditions $U_s(\pm 1)=0$, we have
	\begin{align}
		\rho_{+,\text{app}}^s(\pm 1)&=m^2A^{-1}(\pm 1)\left[-c\pa_y\varphi_+^s(\pm 1)-\varphi_+^s(\pm 1)U_s'(\pm 1)\right]\nonumber\\
		&=O(1)\left( k^2+|c|\right),\label{E1}
\end{align}
and 
\begin{align}\rho_{-,\text{app}}^s(\pm 1)&=m^2A^{-1}(\pm 1)\left[-c\pa_y\varphi_-^s(\pm 1)-\varphi_-^s(\pm 1)U_s'(\pm 1)\right]=O(1).\label{E2}
	\end{align} 
Thus from \eqref{2.2.16} for $u_{+,\text{app}}^s$ and the boundary values \eqref{2.2.3}, \eqref{2.2.5} and \eqref{E1}, we obtain
\begin{align}
	u_{+,\text{app}}^s(\pm1)&=\pa_y\varphi_+^s(\pm 1)+O(1)|c|\left|\rho_{+,\text{app}}^s(\pm 1)\right|\nonumber\\
	&=U_s'(\pm 1)+\left(k^2+|c|\right)|\log\text{Im}c|,\nonumber
\end{align}
which implies \eqref{2.2.21}. Similarly, from \eqref{2.2.16} for $u_{-,\text{app}}^s$, \eqref{2.2.8} and \eqref{E2}, there holds
\begin{align}
	u_{-,\text{app}}^s(\pm1)&=\pa_y\varphi_-^s(\pm 1)+O(1)|c|\left|\rho_{-,\text{app}}^s(\pm 1)\right|=O(1)|\log \text{Im}c|.\nonumber
\end{align}
This yields \eqref{2.2.23}. Therefore, the proof of Lemma \ref{lem2.2} is completed.
\end{proof}
\end{lemma}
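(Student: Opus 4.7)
The plan is to substitute the boundary asymptotics of $\varphi_\pm^s$ and $\partial_y\varphi_\pm^s$ furnished by Lemma \ref{lem2.1} directly into the explicit representation formulas \eqref{2.2.14}--\eqref{2.2.16} that define $\vec{\Xi}^s_{\pm,\text{app}}$. The bounds \eqref{2.2.17}--\eqref{2.2.19} on the $v$-component are immediate, since $v^s_{\pm,\text{app}} = -ik\varphi^s_\pm$ merely multiplies the data of Lemma \ref{lem2.1} by $-ik$; the only task is to track the resulting error orders (e.g., $k^2|c\log\text{Im}\,c|$ becomes $k^3|c\log\text{Im}\,c|$, and the leading $-c$ term in $\varphi^s_+$ becomes $ikc$).

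For the $u$-components, I would first establish boundary estimates for $\rho^s_{\pm,\text{app}}$ using \eqref{2.2.15}. The key simplification is $U_s(\pm 1)=0$, which (i) reduces $A(\pm 1) = 1 - m^2 c^2$ to an $O(1)$ quantity with an $O(1)$ inverse in the subsonic regime for small $|c|$, and (ii) collapses the bracketed expression to $-c\,\partial_y\varphi^s_\pm(\pm 1) - U_s'(\pm 1)\,\varphi^s_\pm(\pm 1)$. Plugging in Lemma \ref{lem2.1} then produces
\[
\rho^s_{+,\text{app}}(\pm 1) = O(1)(k^2+|c|), \qquad \rho^s_{-,\text{app}}(\pm 1) = O(1),
\]
where the second bound is dominated by the $\varphi^s_-(\pm 1) \sim -1/U_s'(\pm 1)$ contribution.

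Next, I would feed these $\rho$-bounds into \eqref{2.2.16}, again using $U_s(\pm 1)=0$, so that $u^s_{\pm,\text{app}}(\pm 1) = \partial_y\varphi^s_\pm(\pm 1) + c\,\rho^s_{\pm,\text{app}}(\pm 1)$. The leading contribution $\partial_y\varphi^s_\pm(\pm 1)$ is supplied by \eqref{2.2.3}, \eqref{2.2.5}, and \eqref{2.2.8}, and the $c\,\rho$ correction is easily absorbed into the advertised $(|c|+k^2)|\log\text{Im}\,c|$ error term, yielding \eqref{2.2.21} and \eqref{2.2.23} respectively.

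No substantive analytical obstacle is anticipated: the lemma is essentially a bookkeeping exercise downstream of Lemma \ref{lem2.1}. The only mild care point is to ensure that the invertibility of $A(\pm 1)$ in the subsonic regime is used consistently (so that $A^{-1}(\pm 1) = 1 + O(|c|^2)$ does not degrade the error orders), and that the $|c\log\text{Im}\,c|$ and $k^2$ remainders from Lemma \ref{lem2.1} combine correctly when multiplied by the prefactors $-ik$ or $c$ appearing in \eqref{2.2.14}--\eqref{2.2.16}.
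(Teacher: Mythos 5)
Your proposal follows exactly the paper's own line of argument: the $v$-components come directly from \eqref{2.2.14} combined with Lemma \ref{lem2.1}, the $\rho$-boundary values are bounded via \eqref{2.2.15} using $U_s(\pm1)=0$ and the $O(1)$ invertibility of $A(\pm1)$ in the subsonic regime, and the $u$-components follow from \eqref{2.2.16} reduced to $u^s_{\pm,\text{app}}(\pm1)=\partial_y\varphi^s_\pm(\pm1)+c\,\rho^s_{\pm,\text{app}}(\pm1)$. The error-order bookkeeping you describe is the same as in the paper, so the proposal is correct and essentially identical in method.
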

Using Lemma \ref{lem2.3} and \eqref{2.2.14}-\eqref{2.2.16}, we can obtain the following estimates on $\vec{\Xi}_{\pm,\text{app}}^s.$
\begin{corollary}\label{cor2.1} Under the same assumptions on $k$ and $c$ as in Lemma \ref{lem2.3}, the inviscid modes $\vec{\Xi}^s_{\pm,\text{app}}=(\rho^s_{\pm,,\text{app}},u^s_{\pm,,\text{app}},v^s_{\pm,,\text{app}})$ have the following bounds: 
		\begin{align}
			\|v_{+,\text{app}}^s\|_{W^{2,\infty}}+\left\|\left(m^{-2}\rho_{+,\text{app}}^s,u^s_{+,\text{app}}\right)\right\|_{W^{1,\infty}}+\eps^{\f17}\left\|\left(m^{-2}\pa_y^2\rho_{+,\text{app}}^s,\pa_y^2u^s_{+,\text{app}}\right)\right\|_{L^{2}}\leq C,\label{2.2.24}
		\end{align}
and
\begin{align}
	\|v_{-,\text{app}}^s\|_{H^2}&+\eps^{\f27}\|\pa_y^3v_{-,\text{app}}^s\|_{L^2}+\left\|\left(m^{-2}\rho_{-,\text{app}}^s,u^s_{-,\text{app}}\right)\right\|_{L^2}\nonumber\\
	&+\sum_{j=1}^3\eps^{\f{2j-1}7}\left\|\left(m^{-2}\pa_y^j\rho_{-,\text{app}}^s,\pa_y^ju^s_{-,\text{app}}\right)\right\|_{L^{2}}\leq C.\label{2.2.25}
\end{align}
\end{corollary}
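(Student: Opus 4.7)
The plan is a direct algebraic computation: substitute the formulas \eqref{2.2.14}--\eqref{2.2.16} into the norms appearing in Corollary \ref{cor2.1}, expand via the Leibniz rule, and control each resulting term with the bounds on $\varphi_\pm^s$ from Lemma \ref{lem2.3}. The enabling structural observation is that $A(y)=1-m^2(U_s-c)^2$ is smooth on $[-1,1]$ and, in the subsonic regime $m\in(0,1/\sqrt{3})$ with $|c|$ small, satisfies $A\approx 1$ uniformly; hence $A^{-1}$ and all its $y$-derivatives, as well as $U_s$, $U_s'$, $U_s''$, are bounded by constants independent of $\epsilon$. Multiplication by such factors therefore preserves both $L^p$ norms and the $\epsilon$-weighted Sobolev norms needed below.

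For estimate \eqref{2.2.24} on $\vec{\Xi}^s_{+,\text{app}}$, I would use $v^s_{+,\text{app}}=-ik\varphi^s_+$ together with $k\approx\epsilon^{1/7}$ and \eqref{2.2.31} to obtain $\|v^s_{+,\text{app}}\|_{W^{2,\infty}}\leq C$. For $m^{-2}\rho^s_{+,\text{app}}$ and $u^s_{+,\text{app}}$, expanding $\pa_y^\ell$ for $\ell=0,1,2$ produces linear combinations of $\pa_y^j\varphi^s_+$ with $j\leq \ell+1$ and smooth bounded coefficients. The $W^{1,\infty}$ bound then follows from $\|\varphi^s_+\|_{W^{2,\infty}}\leq C$, while the $\epsilon^{1/7}$-weighted $L^2$ bound on the second derivatives follows from $\epsilon^{1/7}\|\pa_y^3\varphi^s_+\|_{L^2}\leq C$.

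Estimate \eqref{2.2.25} on $\vec{\Xi}^s_{-,\text{app}}$ follows the same template, but one must track the $\epsilon$-weights carefully because derivatives of $\varphi^s_-$ grow as $\epsilon\to 0$. From $v^s_{-,\text{app}}=-ik\varphi^s_-$ and \eqref{2.2.30}, I would get $\|v^s_{-,\text{app}}\|_{H^2}\lesssim k\cdot\epsilon^{-1/7}=O(1)$ and $\epsilon^{2/7}\|\pa_y^3 v^s_{-,\text{app}}\|_{L^2}\lesssim\epsilon^{2/7}\cdot k\cdot\epsilon^{-3/7}=O(1)$. For $m^{-2}\rho^s_{-,\text{app}}$ and $u^s_{-,\text{app}}$, applying $\pa_y^j$ for $0\leq j\leq 3$ produces terms whose highest-derivative piece is of the form (smooth bounded coefficient)$\cdot\pa_y^{j+1}\varphi^s_-$, and the prescribed weight $\epsilon^{(2j-1)/7}$ matches exactly the weight $\epsilon^{(2(j+1)-3)/7}$ in \eqref{2.2.30}, so every term closes. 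The main obstacle is thus purely bookkeeping: verifying the weight matching between the two lemmas and checking that the coefficients $A^{-1}$, $U_s^{(j)}$ never introduce an additional negative power of $\epsilon$, both of which hold uniformly in the subsonic regime under consideration.
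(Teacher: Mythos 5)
Your proposal is correct and follows essentially the same route as the paper's own proof: expressing $(\rho^s_{\pm,\text{app}},u^s_{\pm,\text{app}},v^s_{\pm,\text{app}})$ through $\varphi^s_\pm$ and its derivatives with uniformly bounded coefficients ($A^{-1}$ and derivatives of $U_s$), and then invoking the $\varepsilon$-weighted bounds of Lemma \ref{lem2.3}. The weight bookkeeping $\varepsilon^{(2j-1)/7}\leftrightarrow\varepsilon^{(2(j+1)-3)/7}$ you point out is indeed the crux and matches the paper exactly.
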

\begin{remark} Note that in the above estimates, compared with $\vec{\Xi}_{+,\text{app}}^s,$ we need bound on one more derivative of   $\vec{\Xi}_{-,\text{app}}^s$ in order to get better estimates on boundary values of remainder for $\vec{\Xi}^{s}_{-\text{app}}$. See Section 4 for details.
\end{remark}
\begin{proof} The bounds for $v_{\pm,\text{app}}^s$ are straightforward.
	That is, by  using \eqref{2.2.14} and bounds for $\varphi_\pm^s$ in Lemma \ref{lem2.3}, we  obtain
\begin{align}
&\|v_{+,\text{app}}^s\|_{W^{2,\infty}}\leq C\|\varphi_{+}^s\|_{W^{2,\infty}}\leq C,\label{E3}\\
&\|v_{-,\text{app}}^s\|_{H^2}+\eps^{\f27}\|\pa_y^3v_{-,\text{app}}^s\|_{L^2}\leq \|\varphi_-^s\|_{H^2}+\eps^{\f37}\|\pa_y^3\varphi_-^s\|_{L^2}
\leq C.\label{E4}
\end{align}
For the density and tangential velocity component, we have from the relation \eqref{2.2.15} and \eqref{2.2.16} that $$m^{-2}\rho_{\pm,\text{app}}^s,~u_{\pm,\text{app}}^s \approx |\pa_y\varphi_\pm^s|+|\varphi_\pm^s|.$$
Thus
\begin{align}
	\left\|\left(m^{-2}\rho_{+,\text{app}}^s,u_{+,\text{app}}^s\right)\right\|_{W^{1,\infty}}\leq C\|\varphi_{+}^s\|_{W^{2,\infty}}\leq C,\nonumber
\end{align}
and 
\begin{align}
	\left\|\left(m^{-2}\pa_y^2\rho_{+,\text{app}}^s,\pa_y^2u_{+,\text{app}}^s\right)\right\|_{L^2}\leq C\|\varphi_{+}^s\|_{H^3}\leq C\eps^{-\f17}\nonumber.
\end{align}
Combining  above estimates  with \eqref{E3} yields the bound \eqref{2.2.24} for  $\vec{\Xi}_{+,\text{app}}^s$.
Similarly, for $\rho_{-,\text{app}}^s$ and $u_{-,\text{app}}^s$, we have 
\begin{align}
\left\|\left(m^{-2}\rho_{-,\text{app}}^s,u^s_{-,\text{app}}\right)\right\|_{L^2}\nonumber&+\sum_{j=1}^3\eps^{\f{2j-1}7}\left\|\left(m^{-2}\pa_y^j\rho_{-,\text{app}}^s,\pa_y^ju^s_{-,\text{app}}\right)\right\|_{L^{2}}\\
	&\leq  C\|\varphi_{-}^s\|_{H^1}+C\sum_{j=2}^4\eps^{\f{2j-3}7}\|\pa_y^j\varphi_{-}^s\|_{L^2}\leq C.\nonumber
\end{align}
Here we have used \eqref{2.2.30} in the last inequality. Combining this with \eqref{E4} for $v_{-,\text{app}}^s$ gives \eqref{2.2.25}. Therefore, the proof of Corollary \ref{cor2.1} is completed.
\end{proof}
\subsection{Boundary layers}
To match the exact boundary conditions in \eqref{1.4-1}, we need to construct two boundary layers near $y=\pm 1.$  Firstly, consider the boundary layer on the bottom $y=-1$. For this, we introduce the fast variable
$$z=\frac{1+y}{\delta}$$
where $\delta>0$ is the scale of boundary layer to be determined later. The approximate   solution to \eqref{1.4} is  in the following form
\begin{align}
\Xi_{-,\text{app}}^f=(\ur^{-},\uu^-,ik\delta\uv^-)(z).\label{2.3.-1}
\end{align}
Substituting the ansatz \eqref{2.3.-1} into \eqref{1.4}, we obtain the following system in the new variable $z$:
\begin{equation}\label{2.3.0}
\left\{\begin{aligned}
	&(U_s-c)\ur^-+\uu^-+\partial_z\uv^-=0,\\
	&-\partial^2_z\uu^-+\frac{ik\delta^2}{\epsilon}(U_s-c)\uu^-+\frac{ik\delta^3}{\epsilon}\uv^-U'_s\\
	&\qquad+\frac{\delta^2}{\epsilon}(ikm^{-2}+\epsilon U^{\prime\prime})\ur^-+(\lambda+1)k^2\delta^2\uu^-+\lambda k^2\delta^2\partial_z\uv^-=0,\\
	&-\partial^2_z\uv^-+\frac{ik\delta^2}{\epsilon}(U_s-c)\uv^-+\frac{m^{-2}}{i k \epsilon}\partial_z\ur^-+k^2\delta^2\uv^--\lambda\partial_z(\uu^-+\partial_z\uv^-)=0.
\end{aligned}\right.
\end{equation}
To obtain the leading order profile $(\ur^-,\uu^-,\uv^-)$ near $y=-1$, we expand $U_s(y)$ as follows:
\begin{align}
	U_s(y)-c&=U_s(-1)-c+U_s'(-1)(1+y)+O(1)(1+y)^2\nonumber\\
	&=U^\prime_s(-1)\delta z-c+O(1)|\delta z|^2\nonumber\\
	&=\delta U^\prime_s(-1)(z+z_0)+O(1)|\delta z|^2,\label{2.3.1}
\end{align}
where the boundary condition $U_s(-1)=0$ of Poiseuille flow has been used. Here,  
\begin{align}\label{2.3.3}
	z_0=\frac{-c}{U^\prime_s(-1)\delta}.
\end{align}
We can also expand $U_s'(y)$ at $y=-1$ as
\begin{align}
	U_s'(y)&=U_s'(-1)+O(1)|\delta z|.\label{2.3.2}
\end{align}

Now we plug \eqref{2.3.1} and \eqref{2.3.2} into \eqref{2.3.0} and derive that the scale
\begin{align}
\delta=\frac{e^{-\f{1}{6}\pi i}}{\left[U_s'(-1)n\right]^{\f13}},\text{ where }n=\frac{k}{\eps},\label{2.3.3-1}
\end{align}
 to balance the first three terms in the second equation in \eqref{2.3.0}. Then by taking the leading order of \eqref{2.3.0},  the following system for boundary layer profile $(\ur^-,\uu^-,\uv^-)$ is derived.
\begin{align}
	\ur^-=0,~ \uu^-+\partial_z \uv^-=0, ~-\partial^2_z \uu^-+(z+z_0)\uu^-+\uv^-=0.\label{2.3.4}
	\end{align}
From the above system, we can obtain the following equation for $\uv^-$.
\begin{align}
-\partial^4_z\uv^-+(z+z_0)\partial^2_z\uv^-=0.\label{2.3.5}
\end{align}
As in  \cite{GGN1,GMM1,YZ}, to solve \eqref{2.3.5} we use the function $\text{Ai}(z)$ which is the solution to the Airy equation:
$$\partial^2_z \text{Ai}-z\text{Ai}=0.$$
The first and second order primitive functions of $\text{Ai}(z)$ are denoted by $\text{Ai}(1,z)$ and $\text{Ai}(2,z)$ respectively. They satisfy
$$\partial_z \text{Ai}(2,z)=\text{Ai}(1,z),\text{ and } \partial_z \text{Ai}(1,z)=\text{Ai}(z). $$
Moreover, $\text{Ai}(z)$, $\text{Ai}(1,z)$ and $\text{Ai}(2,z)$ decay at infinity along the straight line $e^{\f{1}{6}\pi i}\mathbb{R}_+$. Set
\begin{align*}
\uv^{-}=\frac{\text{Ai}(2,z+z_0)}{\text{Ai}(2,z_0)},~
\uu^-=-\frac{\text{Ai}(1,z+z_0)}{\text{Ai}(2,z_0)}.
\end{align*}
It is straightforward to check that $(\uu^-, \uv^-)$ satisfies \eqref{2.3.4}. In view of \eqref{2.3.-1}, we define the boundary layer profile at $y=-1$ as follows:
\begin{align}\label{2.3.6}
 \vec{\Xi}^f_{-,\text{app}}=	(\rho^f_-,u^f_-,v^f_-)=\left(0,\uu^-, ik\delta \uv^{-}\right)(z),~ ~	z=e^{\frac16\pi i}(1+y)\left[U_s'(-1)n\right]^{\frac13}.
\end{align}

The boundary layer at $y=1$ can be constructed similarly.  Set the fast variable
$$\tilde{z}=\frac{1-y}{\tilde{\delta}},
$$
where the scale \begin{align}\label{d1}
\tilde{\delta}=\frac{e^{-\f{1}{6}\pi i}}{\left[-U_s'(1)n\right]^{\f13}}
\end{align}
of boundary layer can be derived in the same way as \eqref{2.3.3-1}.
Following \eqref{2.3.-1}-\eqref{2.3.6}, we can construct the  boundary layer profile near $y=1$:
\begin{align}
\vec{\Xi}^f_{+,\text{app}}=(\rho^f_+,u^f_+,v^f_+)=\left(0,\uu^+,ik\tilde{\delta}\uv^+\right)(\tilde{z}),\label{2.3.7}
\end{align}
where
\begin{align}
	\uv^{+}=\frac{\text{Ai}(2,\tilde{z}+\tilde{z}_0)}{\text{Ai}(2,\tilde{z}_0)},~~
	\uu^+=\frac{\text{Ai}(1,\tilde{z}+\tilde{z}_0)}{\text{Ai}(2,\tilde{z}_0)},~~\text{with }\tilde{z}_0=\frac{c}{U_s'(1)\tilde{\delta}}.\label{2.3.7-1}
\end{align}
In the next Lemma, we summarize some pointwise estimates on $\vec{\Xi}_{\pm,\text{app}}^f$. The proof follows from Lemma 3.9 in \cite{LYZ}. Therefore, we omit the details for brevity.

\begin{lemma}\label{lem2.5}
	For $j=0,1,2$ and $l\geq 0$, there exists a positive constant $\tau_1$, such that the boundary layers $\vec{\Xi}_{\pm,\text{app}}^f$ satisfy the following pointwise bounds:
	\begin{align}
		\left|(1\pm y)^l\pa_y^jv_{\pm,\text{app}}^f\right|&\leq C|n|^{\frac{2(j-l)-3}{6}}e^{-\tau_1n^{\f13}(1\pm y)},\label{2.3.8}\\
	\left|(1\pm y)^l\pa_y^ju_{\pm,\text{app}}^f\right|&\leq C|n|^{\frac{j-l}{3}}e^{-\tau_1n^{\f13}(1\pm y)},\label{2.3.9}
	\end{align}
for any $y\in (-1,1).$ Here the constant $C$ is independent of $\eps$.
\end{lemma}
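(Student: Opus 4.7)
The plan is to reduce both pointwise bounds to classical decay estimates for the Airy primitives along the good ray $e^{\frac16\pi i}\R_+$. Since the two boundary layers are structurally identical, I will describe the argument at $y=-1$; the bound at $y=1$ follows by the same calculation upon replacing $(\delta,z,z_0,U_s'(-1))$ with $(\tilde{\delta},\tilde z,\tilde z_0,-U_s'(1))$.

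First I would change to the stretched variable $z$. The identity $\pa_y=\delta^{-1}\pa_z$ with $|\delta^{-1}|\approx |n|^{1/3}$ turns each $y$-derivative into a factor $|n|^{1/3}\pa_z$, and $(1+y)^l$ into $|n|^{-l/3}|z|^l$. Using $u^f_-=\uu^-$ and $v^f_-=ik\delta\,\uv^-$ together with $|k\delta|=|n|\eps\cdot|n|^{-1/3}\approx |n|^{-1/2}$ (which comes from $\eps\approx|n|^{-7/6}$), both \eqref{2.3.8} and \eqref{2.3.9} reduce to the single uniform estimate
\begin{equation*}
|z|^l\left|\pa_z^j \text{Ai}(k,z+z_0)\right|\leq C\,|\text{Ai}(2,z_0)|\, e^{-\tau_1|z|},\qquad k=1,2,
\end{equation*}
for $z\in e^{\frac16\pi i}\R_+$.

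Next I would invoke the super-exponential decay of Airy primitives along that ray: $\text{Re}(\zeta^{3/2})=|\zeta|^{3/2}$ there, hence $\left|\pa_\zeta^j\text{Ai}(k,\zeta)\right|\leq C\langle\zeta\rangle^{-\alpha(j,k)}\exp\!\left(-\tfrac23|\zeta|^{3/2}\right)$. The phase of $\delta$ chosen in \eqref{2.3.3-1} and $\text{Im}\,c>0$ from \eqref{c} place $z_0$ in the upper half-plane with $|z_0|=O(1)$, so $z+z_0$ remains within a narrow sector around the same ray for every $z\in e^{\frac16\pi i}\R_+$. Using $|z+z_0|\geq |z|-O(1)$ and $|\zeta|^{3/2}\geq \tau_1|\zeta|$ for $|\zeta|\geq 1$, one absorbs the polynomial prefactor and the shift $z_0$ into the exponential after a small reduction of $\tau_1$.

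The main obstacle is the lower bound $|\text{Ai}(2,z_0)|\gtrsim 1$. This uses the scaling \eqref{c} in an essential way: \eqref{2.3.3} with $|c|\approx\eps^{2/7}$ and $|\delta|\approx\eps^{2/7}$ gives $|z_0|=O(1)$ and, crucially, $\text{Im}\,z_0\gtrsim 1$. Since $\text{Ai}(2,\cdot)$ is entire with its zeros accumulating along the negative real axis, $z_0$ lies in a compact subset of the upper half-plane bounded away from the zero set of $\text{Ai}(2,\cdot)$, which delivers a uniform positive lower bound for the denominator. Combining this with the two previous steps yields the claimed pointwise inequalities at both boundaries.
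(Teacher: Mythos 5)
The paper does not give its own proof of Lemma~\ref{lem2.5}; it cites Lemma~3.9 of \cite{LYZ}, so there is no internal argument to compare against. Your reduction to Airy-function estimates via the stretched variable $z=\delta^{-1}(1+y)$, the scaling $|\delta|\approx n^{-1/3}$ and $|k\delta|\approx n^{-1/2}$, and the bookkeeping of powers of $n$ are all correct, and this is indeed the right framework. However, the part of your argument that handles the shift $z_0$ contains a genuine error.

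You claim that the choice of $\delta$ and $\mathrm{Im}\,c>0$ place $z_0$ in the upper half-plane with $\mathrm{Im}\,z_0\gtrsim 1$, so that $z+z_0$ stays in a narrow sector around $e^{\pi i/6}\R_+$. This is false. From \eqref{2.3.3} and \eqref{2.3.3-1}, $z_0=-c\,e^{\pi i/6}[U_s'(-1)]^{-2/3}n^{1/3}$, and since $\mathrm{Re}\,c>0$ with $\arg c$ small and positive, one gets $\arg z_0\approx -\tfrac{5\pi}{6}+\arg c$; the paper records this explicitly in Section~4 as $-\tfrac{5}{6}\pi<\arg z_0<-\tfrac56\pi+b_0T_0^{-7/2}$. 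Thus $z_0$ lies in the third quadrant, essentially on the ray \emph{opposite} to the one along which $z$ travels, and $|z_0|\approx T_0^{7/3}$ rather than being uniformly $O(1)$ small. Consequently $z+z_0$ does not stay in any narrow sector around $e^{\pi i/6}\R_+$: as $z$ increases from $0$, the point $z+z_0$ moves from the $e^{-5\pi i/6}$-direction, passes near the origin, and only then enters the $e^{\pi i/6}$-sector. In particular $\mathrm{Re}\bigl((z+z_0)^{3/2}\bigr)$ is \emph{negative} for small $z$, so your inequality ``$|\zeta|^{3/2}\geq \tau_1|\zeta|$ hence absorb the shift into the exponential'' breaks down exactly where the ratio $\text{Ai}(k,z+z_0)/\text{Ai}(2,z_0)$ is largest. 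The desired bound does hold, but the reason is that $\mathrm{Re}\bigl((z+z_0)^{3/2}\bigr)$ is monotonically increasing along $z\in e^{\pi i/6}\R_+$ (since $\arg\bigl((z+z_0)^{1/2}e^{\pi i/6}\bigr)\in(-\pi/4,\pi/4)$ throughout), so the quotient with $\text{Ai}(2,z_0)$ — which is itself exponentially large at $\arg z_0\approx-5\pi/6$, not merely $\gtrsim 1$ via an upper-half-plane/zero-avoidance argument — decays at the claimed rate. You need to rework the exponential-decay step around this monotonicity of $\mathrm{Re}\bigl((z+z_0)^{3/2}\bigr)$ rather than around $z+z_0$ staying near the good ray; as written, the argument has a gap precisely in the regime $0\leq |z|\lesssim |z_0|$.
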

By using explicit formula \eqref{2.3.6}, \eqref{2.3.7} and pointwise estimates \eqref{2.3.8}, \eqref{2.3.9}, we have the following lemma about the asymptotic behavior of boundary values for $(u_{\pm,\text{app}}^f,v_{\pm,\text{app}}^f)$.
\begin{lemma}
	For sufficiently small $\eps$, boundary values of $(u_{\pm,\text{app}}^f,v_{\pm,\text{app}}^f)$ have the following expansions:
	\begin{align}
			&u_{+,\text{app}}^f( 1)=\frac{\text{Ai}(1,\tilde{z}_0)}{\text{Ai}(2,\tilde{z}_0)}, ~ v_{+,\text{app}}^f( 1)=ik\tilde{\delta},~~u_{+,\text{app}}^f(-1),v_{+,\text{app}}^f(-1)=O(1)\eps^{\infty},\label{2.3.10}\\ &u_{-,\text{app}}^f(- 1)=-\frac{\text{Ai}(1,z_0)}{\text{Ai}(2,z_0)}, ~ v_{-,\text{app}}^f(- 1)=ik\delta,~~u_{-,\text{app}}^f( 1), v^f_{-,\text{app}}(1) =O(1)\eps^\infty. \label{2.3.11}
	\end{align}
\end{lemma}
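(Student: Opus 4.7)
The statement naturally splits into two kinds of boundary values: those at the endpoint where the corresponding boundary layer is anchored (fast variable equal to zero), and those at the opposite wall where the layer should be exponentially small. I would treat these two types separately, and neither part is genuinely difficult.

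For the anchoring endpoints, I would just substitute directly into the explicit formulas \eqref{2.3.6}, \eqref{2.3.7}, \eqref{2.3.7-1}. At $y=1$ the fast variable $\tilde{z}=(1-y)/\tilde{\delta}$ vanishes, so $\uv^+(0)=\text{Ai}(2,\tilde{z}_0)/\text{Ai}(2,\tilde{z}_0)=1$ and $\uu^+(0)=\text{Ai}(1,\tilde{z}_0)/\text{Ai}(2,\tilde{z}_0)$, giving
$$u_{+,\text{app}}^f(1)=\frac{\text{Ai}(1,\tilde{z}_0)}{\text{Ai}(2,\tilde{z}_0)},\qquad v_{+,\text{app}}^f(1)=ik\tilde{\delta}\,\uv^+(0)=ik\tilde{\delta}.$$
Symmetrically, at $y=-1$ the other fast variable $z=(1+y)/\delta$ vanishes, and the definition \eqref{2.3.6} together with $\uv^-(0)=1$, $\uu^-(0)=-\text{Ai}(1,z_0)/\text{Ai}(2,z_0)$ yields the asserted values of $u_{-,\text{app}}^f(-1)$ and $v_{-,\text{app}}^f(-1)$.

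For the opposite endpoints, I would invoke the pointwise decay estimates \eqref{2.3.8}--\eqref{2.3.9} from Lemma \ref{lem2.5} with $j=l=0$. Evaluating the $+$-layer at $y=-1$ gives
$$|u_{+,\text{app}}^f(-1)|+|v_{+,\text{app}}^f(-1)|\le C\,e^{-2\tau_1 n^{1/3}},$$
and in the regime \eqref{c} one has $n=k/\eps\sim \eps^{-6/7}$, hence $n^{1/3}\sim \eps^{-2/7}\to\infty$. The right-hand side is therefore bounded by $C_N\eps^N$ for every $N$, which is precisely the $O(1)\eps^\infty$ statement. A symmetric application of Lemma \ref{lem2.5} at $y=1$ takes care of $u_{-,\text{app}}^f(1)$ and $v_{-,\text{app}}^f(1)$.

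The ``hard part'' here is therefore not really hard: the lemma is bookkeeping that combines closed-form Airy substitutions on one side with the already established exponential localization of the Airy profile on the other. The only minor point needing verification is that the shifts $z_0$ and $\tilde{z}_0$ in \eqref{2.3.3} and \eqref{2.3.7-1} lie in the sector where $\text{Ai}(2,\cdot)$ does not vanish, so that the ratios are well defined and the decay constant $\tau_1$ of Lemma \ref{lem2.5} is uniform in $\eps$; this is immediate from the range \eqref{c} of the spectral parameter $c$.
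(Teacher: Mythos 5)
Your proof is correct and follows essentially the same route as the paper's: direct substitution into the explicit Airy formulas \eqref{2.3.6}--\eqref{2.3.7-1} at the anchoring endpoints, and the pointwise decay bounds \eqref{2.3.8}--\eqref{2.3.9} from Lemma \ref{lem2.5} with $j=l=0$ to get $O(1)\eps^\infty$ at the opposite wall. The brief extra remark about $\text{Ai}(2,\cdot)$ not vanishing at $z_0,\tilde z_0$ is a reasonable sanity check but does not change the argument.
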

\begin{proof}
	We only show \eqref{2.3.10}. Boundary conditions for $(u_{+,\text{app}}^f,v_{+,\text{app}}^f)$ at $y=1$ directly follow from the explicit formula \eqref{2.3.7} and \eqref{2.3.7-1}. From pointwise estimates \eqref{2.3.8} and \eqref{2.3.9} for $j=l=0,$ we can obtain
	\begin{align}
		\left|v_{+,\text{app}}^f(-1)\right|&\leq Cn^{-\f12}e^{-2\tau_1 n^{\f13}}\leq Ce^{-2\tau_1\eps^{-\f27}},\nonumber\\
	\left|u_{+,\text{app}}^f(-1)\right|&\leq Ce^{-2\tau_1 n^{\f13}}\leq Ce^{-\tau_1\eps^{-\f27}}.\nonumber
	\end{align}
The proof of \eqref{2.3.11} is similar.
\end{proof}
\subsection{Error estimates}

Based on the approximate slow and fast modes constructed in the previous two subsections,  we now summarize the formulation and related estimates on errors generated by the approximations. Recall inviscid modes $\vec{\Xi}^s_{\pm,\text{app}}$ defined in \eqref{2.2.14}-\eqref{2.2.16}, and viscous modes $\vec{\Xi}^f_{\pm,\text{app}}$ given by \eqref{2.3.-1} and \eqref{2.3.7} respectively. Plugging these solutions  into \eqref{1.4}, we  obtain the following error functions: 
\begin{align}\label{2.4.1}
\vec{E}_{\pm}^{s}\eqdef\CL(\vec{\Xi}^s_{\pm,\text{app}})=\bigg(0, &-\eps\Delta_k u_{\pm,\text{app}}^s-\lambda\eps ik\dv(u_{\pm,\text{app}}^s,v_{\pm,\text{app}}^s)+\eps U_s''\rho_{\pm,\text{app}}^s,\nonumber\\
&-\eps\Delta_kv_{\pm,\text{app}}^s-\lambda\eps\pa_y\dv(u_{\pm,\text{app}}^s,v_{\pm,\text{app}}^s)-k^4(U_s-c)\varphi_{\pm,k}\bigg),
\end{align}
where $\varphi_{\pm, k}$ are defined in \eqref{2.A.0},
and
\begin{align}\label{2.4.2}
\vec{E}_{\pm}^f\eqdef	\CL(\vec{\Xi}^f_{\pm,\text{app}})=\bigg(0,~ &k^2\eps u_{\pm,\text{app}}^f+ik\left[U_s(y)-U_s'(\pm 1)(1\pm y)\right]u_{\pm,\text{app}}^f\nonumber\\
	&+\left(U_s'(y)-U_s'(-1)\right)v_{\pm,\text{app}}^f,~
	 -\eps\Delta_kv_{\pm,\text{app}}^f+ik(U_s-c)v_{\pm,\text{app}}^f \bigg).
\end{align}
Based on bounds in Corollary \ref{cor2.1} and Lemma \ref{lem2.5}, we have the following estimates on error terms $\vec{E}^s_{\pm}$ and $\vec{E}^f_{\pm}$. 
\begin{proposition}\label{lem2.4}
Decompose the error function $\vec{E}_+^s$ as
\begin{align}
	\vec{E}_+^s=\vec{E}_{+,1}^s+\vec{E}_{+,2}^s,\label{2.4.4}
\end{align}
where $\vec{E}_{+,2}^s=(0,0,-k^4(U_s-c)\varphi_{+,k}).$  Then it holds that
\begin{align}\label{2.4.4-1} \|\vec{E}_{+,1}^s\|_{L^2}\leq C\eps^{\f67},~ \|\vec{E}_{+,2}^s\|_{H^1}\leq C\eps^{\f47}.
\end{align}
The error function $\vec{E}_-^s$ satifies the following uniform estimate:
\begin{align}
	\|\vec{E}_-^s\|_{L^2}+\eps^{\f27}\|\pa_y\vec{E}_-^s\|_{L^2}\leq C\eps^{\f47}.\label{2.4.3}
\end{align}
For the error functions $\vec{E}_{\pm}^f$ generated by the boundary layers, it holds that
\begin{align}\label{2.4.5}
\|\vec{E}_{\pm}^f\|_{L^2}\leq C\eps^{\f67}.
\end{align}
\end{proposition}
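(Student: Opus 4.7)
The plan is to insert the approximate solutions into the operator $\CL$, expand the resulting residuals term by term, and then bound each term in $L^2$ (or $H^1$) using the Sobolev bounds from Corollary \ref{cor2.1} together with the pointwise bounds in Lemma \ref{lem2.5}, and finally combine these with the scaling $k=T_0\eps^{1/7}$, $n=k/\eps=T_0\eps^{-6/7}$, $|c|\approx\eps^{2/7}$ to read off the $\eps$-powers in \eqref{2.4.4-1}--\eqref{2.4.5}.

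For $\vec{E}_+^s$, the splitting \eqref{2.4.4} isolates the $O(k^4)$ remainder from the Lees-Lin equation \eqref{2.A.2} into $\vec{E}_{+,2}^s$, while $\vec{E}_{+,1}^s$ collects the viscous residuals $\eps\Delta_k u_{+,\text{app}}^s$, $\lambda\eps ik\,\dv(u_{+,\text{app}}^s,v_{+,\text{app}}^s)$, $\eps U_s''\rho_{+,\text{app}}^s$, $\eps\Delta_k v_{+,\text{app}}^s$ and $\lambda\eps\pa_y\dv(u_{+,\text{app}}^s,v_{+,\text{app}}^s)$. The dominant contribution is $\eps\|\pa_y^2 u_{+,\text{app}}^s\|_{L^2}$, which by \eqref{2.2.24} is bounded by $\eps\cdot C\eps^{-1/7}=C\eps^{6/7}$; every other term is either of the same size or smaller because of the extra factors of $k\approx\eps^{1/7}$. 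For $\vec{E}_{+,2}^s$, the explicit formula \eqref{2.A.0} combined with the $H^1$-boundedness of $\varphi_-$ recorded in Lemma \ref{lem2.3} shows that $\|(U_s-c)\varphi_{+,k}\|_{H^1}$ is uniformly bounded, so the prefactor $k^4\approx\eps^{4/7}$ produces the advertised $H^1$ bound.

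The estimate on $\vec{E}_-^s$ parallels the previous step but uses the weaker $L^2$-type bounds in \eqref{2.2.25}, where each derivative costs one factor of $\eps^{-2/7}$. The leading viscous term is $\eps\|\pa_y^2 u_{-,\text{app}}^s\|_{L^2}\leq\eps\cdot C\eps^{-3/7}=C\eps^{4/7}$, and all remaining contributions are of the same order or smaller. To capture the derivative estimate I would apply $\pa_y$ to \eqref{2.4.1}, so that the leading term becomes $\eps\|\pa_y^3 u_{-,\text{app}}^s\|_{L^2}\leq\eps\cdot C\eps^{-5/7}=C\eps^{2/7}$, and multiplying by the weight $\eps^{2/7}$ reproduces the claimed $C\eps^{4/7}$ bound. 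This is precisely the reason that Corollary \ref{cor2.1} states one more derivative for $\vec{\Xi}^s_{-,\text{app}}$ than for $\vec{\Xi}^s_{+,\text{app}}$.

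Finally, for $\vec{E}_\pm^f$, the expression \eqref{2.4.2} contains three genuine residuals: the diffusion $-\eps\Delta_k v_{\pm,\text{app}}^f$, the convective Taylor remainder $ik[U_s(y)-U_s'(\pm1)(1\pm y)]u_{\pm,\text{app}}^f$ of order $(1\pm y)^2$, and the shear remainder $[U_s'(y)-U_s'(\pm1)]v_{\pm,\text{app}}^f$ of order $(1\pm y)$. Using the weighted pointwise bounds \eqref{2.3.8}--\eqref{2.3.9}, the first convective remainder contributes
\[
\bigl\|k(1\pm y)^2 u_{\pm,\text{app}}^f\bigr\|_{L^2}\leq Ck\,n^{-2/3}\,n^{-1/6}\approx\eps^{1/7}\cdot\eps^{5/7}=\eps^{6/7},
\]
and parallel computations for the other two residuals show that each is bounded by $C\eps^{6/7}$ or smaller, yielding \eqref{2.4.5}. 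The main obstacle in the whole proposition is the careful balancing of the negative $\eps$-powers arising from derivatives of the inviscid correctors (inherited from the singular behavior of $\varphi_-$ near the critical layer) against the viscous prefactor $\eps$; the scheme only closes at the specific low frequency $k\approx\eps^{1/7}$, for which these powers are precisely compensated, and this accounts for the choice made in the parameter regime \eqref{c}.
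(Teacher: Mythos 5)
Your proposal matches the paper's proof in structure and in every essential computation: the split \eqref{2.4.4} followed by Corollary \ref{cor2.1} for the slow-mode viscous residuals, the $H^1$-boundedness of $\varphi_-$ for the $k^4$ term, the extra derivative recorded in \eqref{2.2.25} for the $\pa_y\vec E_-^s$ bound, and the weighted pointwise bounds \eqref{2.3.8}--\eqref{2.3.9} combined with the scaling $k\approx\eps^{1/7}$, $n\approx\eps^{-6/7}$ for $\vec E_\pm^f$. Two minor slips that do not affect the argument: the $H^1$ bound on $\varphi_-$ comes from Lemma \ref{lemA3} (Lemma \ref{lem2.3} bounds $\varphi_\pm^s$, not $\varphi_-$), and the error formula \eqref{2.4.2} also carries the residuals $k^2\eps\,u_{\pm,\text{app}}^f$ and $ik(U_s-c)v_{\pm,\text{app}}^f$ in addition to the three you list, but both are comparably small so the estimate closes exactly as you describe.
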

\begin{proof}
	We start with the  error function $\vec{E}_+^s$ defined in \eqref{2.4.1}. By using \eqref{2.2.24} in  Corollary \ref{cor2.1}, we obtain 
	\begin{align}
		\|\vec{E}_{+,1}^s\|_{L^2}\leq C\eps\left\|\left(u_{+,\text{app}}^s,v_{+,\text{app}}^s\right)\right\|_{H^2}+\eps\|\varrho_{+,\text{app}}^s\|_{L^2}\leq C\eps^{\f67},\label{2.4.6}
	\end{align}
and 
\begin{align}
	\|\vec{E}_{+,2}^s\|_{H^1}\leq k^4\left(1+\|\varphi_-\|_{H^1}\right)\leq \eps^{\f47}.\label{2.4.7}
\end{align}
Then combining \eqref{2.4.6} and \eqref{2.4.7} yields  \eqref{2.4.4-1}.

For $\vec{E}_{-}^s$, we use \eqref{2.2.25} to obtain that
\begin{align}
\|\vec{E}_-^s\|_{L^2}\leq C\eps\left\|\left(u_{-,\text{app}}^s,v_{-,\text{app}}^s\right)\right\|_{H^2}+\eps\|\varrho_{-,\text{app}}^s\|_{L^2}+k^4\|\varphi_{-}\|_{L^2}\leq C\eps^{\f47},\label{2.4.8}
\end{align}
and
\begin{align}
	\|\pa_y\vec{E}_{-}^s\|_{L^2}\leq C\eps\left\|\left(u_{-,\text{app}}^s,v_{-,\text{app}}^s\right)\right\|_{H^3}+C\eps\|\varrho_{-,\text{app}}^s\|_{H^1}+Ck^4\|\varphi_-\|_{H^1}\leq C\eps^{\f27}.\label{2.4.9}
\end{align}
Thus the inequality \eqref{2.4.3} follows from \eqref{2.4.8} and \eqref{2.4.9}.

Finally for $\vec{E}_{\pm}^f$, using pointwise estimates \eqref{2.3.8} and \eqref{2.3.9} gives
\begin{align}
	\|\vec{E}_{\pm}^f\|_{L^2}\leq&C k^2\eps\|u_{\pm,\text{app}}^f\|_{L^2}+Ck\|(1\pm y)^2u_{\pm,\text{app}}^f\|_{L^2}+C\|(1\pm y)v_{\pm,\text{app}}^f\|_{L^2}\nonumber\\
	&+C\left(\eps\|v_{\pm,\text{app}}^f\|_{H^2}+k\|(1\pm y)v_{\pm,\text{app}}^f\|_{L^2}+|c|\|v_{\pm,\text{app}}^f\|_{L^2}\right)\nonumber\\
	\leq &C\eps^{\f67},\nonumber
\end{align}
which is \eqref{2.4.5}. The proof of Proposition \ref{lem2.4} is completed.
\end{proof}

\section{Control of remainder}
In the last section, we have constructed four independent{ approximate} solutions to the eigenvalue problem \eqref{1.4}. To correct  errors generated by the approximation, we need to 
solve the following remainder system
\begin{equation}\label{3.1}
	\left\{
	\begin{aligned}
		&ik(U_s-c)\rho+\dv(u,v)=0,\quad y\in(-1,1),\\
		&-\epsilon\Delta_ku-\lambda ik\epsilon \dv(u,v)+ik(U_s-c)u+(ikm^{-2}+\epsilon U^{\prime\prime}_s)\rho+vU^\prime_s=f_u,\\
		&-\epsilon\Delta_k v-\lambda\epsilon\partial_y \dv(u,v)+ik(U_s-c)v+m^{-2}\partial_y\rho=f_v,\\
		&v|_{y=\pm1}=0,
	\end{aligned}
	\right.
\end{equation}
where $(f_u,f_v)$ are in homogenuous source terms.
Notice that in \eqref{3.1}, we only impose the boundary condition for normal velocity $v$. The system \eqref{3.1} is studied in the regime of parameters: $k\approx \eps^{\f17}$ and $c$ satisfies \eqref{c}.

The main result in this section is the following proposition.

\begin{proposition}\label{prop1}
	Let the Mach number $m\in\left(0,\f1{\sqrt{3}}\right)$.
	 If $f_u,f_v\in L^2(-1,1)$, then  remainder system \eqref{3.1} admits a solution $(\rho,u,v)\in H^1(-1,1)\times \left(H^2(-1,1)\right)^2$. Moreover, the solution satisfies  following uniform-in-$\eps$ estimates
	\begin{align}
		\eps^{\f57}\|(m^{-1}\rho,u,v)\|_{L^2}+\eps^{\f57}\|(m^{-2}\pa_y\rho,\pa_yu,\pa_yv)\|_{L^2}+\eps^{\f97}\|(\pa_y^2u,\pa_y^2v)\|_{L^2}\leq C\|(f_u,f_v)\|_{L^2}.\label{3.0.1}
	\end{align}
For $f_u$, $f_v\in H^1(-1,1)$, we define the operator
\begin{align}
	\Omega(f_u,f_v)=-f_v+\frac{1}{ik}\partial_y(A^{-1}f_u).\label{3.0.1-1}
\end{align}
 Then the solution $(\rho,u,v)$ satisfies the following improved estimates:
\begin{align}
	&\eps^{\f27}\|(m^{-1}\rho,u,v)\|_{L^2}+\eps^{\f27}\|(m^{-2}\pa_y\rho,\pa_yu,\pa_yv)\|_{L^2}+\eps^{\f47}\|(\pa_y^2u,\pa_y^2v)\|_{L^2}\nonumber\\
	&\qquad\leq C\|\Omega(f_u,f_v)\|_{L^2}+C\eps^{\f17}\|(f_u,f_v)\|_{L^2}+C\eps^{\f27}\|\dv(f_u,f_v)\|_{L^2}\label{3.0.2}.
\end{align}
Furthermore, the boundary values $u(\pm 1;c)$ are analytic in $c$.
\end{proposition}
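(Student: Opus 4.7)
The plan is to set up the quasi-compressible--Stokes iteration introduced in \cite{YZ}, now adapted to the bounded channel $(-1,1)$. First, I would decompose the linearized operator $\mathcal{L}$ as $\mathcal{L} = \mathcal{L}_{QC} + \mathcal{L}_S$, where the quasi-compressible part $\mathcal{L}_{QC}$ is designed so that, after introducing the effective stream function $\psi$ via $v = ik\psi$ and solving the continuity equation for $\rho$ through the relation \eqref{2.1.3}, the system collapses to a fourth-order Orr--Sommerfeld type equation
\begin{align*}
-\epsilon \Delta_k^2 \psi + ik\,\mathrm{Ray}_{\mathrm{CNS}}(\psi) = \Omega(f_u,f_v)
\end{align*}
with boundary conditions $\psi = \partial_y\psi = 0$ at $y = \pm 1$. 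The complementary Stokes part $\mathcal{L}_S$ collects the lower-order source and density terms discarded by the quasi-compressible truncation, and is responsible for recovering the elliptic regularity lost in the first step.

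Next I would treat the quasi-compressible subproblem by energy estimates: test the Orr--Sommerfeld equation against $\overline{\psi}$ and $\overline{\Delta_k \psi}$ with suitable weights. The subsonic threshold $m \in (0,1/\sqrt{3})$ ensures that the coefficient $A = 1 - m^2(U_s-c)^2$ and its derivative give positive quadratic forms, and the concavity $U_s'' < 0$ produces the crucial sign on the Rayleigh-type contribution, yielding
\begin{align*}
\epsilon \|\partial_y^2 \psi\|_{L^2}^2 + k\,\mathrm{Im}\,c\, \|\partial_y \psi\|_{L^2}^2 \lesssim |\langle \Omega, \psi\rangle|
\end{align*}
together with companion bounds on $k^2 \|\psi\|_{L^2}^2$. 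Balancing viscous dissipation against $|c| \approx \epsilon^{2/7}$ and $k \approx \epsilon^{1/7}$ gives the $\epsilon^{2/7}$ scaling in \eqref{3.0.2} when the source $\Omega$ is used, and the weaker $\epsilon^{5/7}$ scaling in \eqref{3.0.1} when only $L^2$ control of $(f_u, f_v)$ is available, since the pressure reconstruction costs one normal derivative. The Stokes subproblem is then treated as a standard elliptic system in $(u,v)$ with $\rho$ recovered from continuity; the resulting gain of regularity feeds back into the quasi-compressible step, and the iteration converges once $\epsilon$ is small enough.

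The improved estimate \eqref{3.0.2} relies on recognizing that $\Omega(f_u, f_v) = -f_v + \frac{1}{ik}\partial_y(A^{-1}f_u)$ is precisely the combination of $(f_u,f_v)$ that appears as the right-hand side of the Orr--Sommerfeld reduction; the additional terms $\epsilon^{1/7}\|(f_u,f_v)\|_{L^2}$ and $\epsilon^{2/7}\|\dv(f_u,f_v)\|_{L^2}$ absorb the commutator errors from passing between the quasi-compressible system and the full remainder system. Analyticity of $u(\pm 1; c)$ in $c$ then follows from the uniqueness of the solution together with the analytic dependence of $\mathcal{L}_{QC}, \mathcal{L}_S$, and $A^{-1}$ on $c$ in the half-plane $\mathrm{Im}\,c > 0$ (where the critical layer sits strictly off the real axis); a standard analytic Fredholm argument transfers analyticity from the resolvent to the trace.

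The principal obstacle will be closing the energy estimates with the sharp $\epsilon$-powers in \eqref{3.0.2}, because the quasi-compressible reduction introduces a genuine loss of derivative on $\rho$, and the Stokes correction must recover this without spoiling the boundary layer balance near $y = \pm 1$. The exact exploitation of $U_s'' < 0$ and $m^2 < 1/3$ to guarantee positive-definiteness of the Rayleigh-type quadratic form in the presence of the compressible modification $A^{-1}$ will be delicate, and is where the subsonic threshold becomes essential rather than the weaker $m<1$ required merely for ellipticity of $A$.
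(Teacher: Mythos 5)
Your high-level strategy matches the paper: quasi-compressible reduction to an Orr--Sommerfeld--type equation, energy estimates with a carefully chosen weight exploiting $U_s''<0$ and the restriction $m\in(0,1/\sqrt{3})$, Stokes step to recover elliptic regularity, geometric convergence of the alternation, and analyticity in $c$. However, two concrete features of your reduction are wrong and would derail the estimates.

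First, your boundary conditions $\psi=\partial_y\psi=0$ at $y=\pm 1$ do not match what the remainder system \eqref{3.1} actually enforces, namely only $v|_{y=\pm 1}=0$. The paper's quasi-compressible system \eqref{3.1.1} likewise imposes only $\cv|_{y=\pm1}=0$, and the Orr--Sommerfeld reduction \eqref{3.1.3} closes with the conditions $\Phi|_{y=\pm 1}=\Lambda(\Phi)|_{y=\pm 1}=0$ (a stress-free/vorticity-type condition), not clamped conditions. This is essential for two reasons. It matches the partial boundary data of \eqref{3.1}, and it leaves the multiplier $-w\overline{\Lambda(\Phi)}$ integrable by parts without uncontrolled boundary terms (see the computation for $I_{11}$ around \eqref{A1.3-0}). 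More importantly, the tangential trace $u(\pm1;c)$ of the remainder must be \emph{free}; it is precisely the object fed into the dispersion relation in Section~4. If you clamp $\partial_y\psi$ at both walls, you impose too many conditions on the 4th-order ODE at this stage, and the Stokes correction --- which in the paper carries the conditions $\partial_y\CU|_{y=\pm1}=\CV|_{y=\pm1}=0$ --- has nothing left to correct.

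Second, the viscous term in your OS equation is $-\epsilon\Delta_k^2\psi$, which is the incompressible operator. The paper's equation \eqref{3.1.2} has $\frac{i}{n}\Lambda(\Delta_k\Phi)$ in its place, where $\Lambda(\Phi)=\partial_y(A^{-1}\partial_y\Phi)-k^2\Phi$ is the \emph{modified} vorticity carrying the $A^{-1}$ weight. This modification is not lower order: the coercivity in Lemma~\ref{lem3.1} comes from pairing the equation against $-w\overline{\Lambda(\Phi)}$, and the imaginary part of $I_1$ (the viscous block) produces the dissipative term $\frac{1}{n}\|(\partial_y\Lambda\Phi,k\Lambda\Phi)\|_{L^2}^2$ with the good sign precisely because the viscous operator is $\Lambda\Delta_k$ and not $\Delta_k^2$. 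Dropping the $A^{-1}$ weight there would spoil the commutator structure $[\Delta_k,\Lambda]$ (estimate \eqref{A1.3-1}--\eqref{A1.3-5}) and the subsonic lower bound on $\text{Re}\,w$. A related smaller point: the $\epsilon^{5/7}$ versus $\epsilon^{2/7}$ gap in \eqref{3.0.1} and \eqref{3.0.2} is not a pressure-reconstruction cost but reflects which step is applied at order zero of the iteration (Stokes at $\epsilon^{-3/7}$ gain when only $L^2$ data is available, quasi-compressible when $H^1$ data lets you write $\Omega(f_u,f_v)$), so the improved estimate is not an energy refinement but a different entry point into the same scheme.
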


\subsection{Quasi-compressible approximation}
To solve \eqref{3.1}, we introduce the following quasi-compressible approximate problem:
\begin{equation}\label{3.1.1}
	\left\{
	\begin{aligned}
		&ik(U_s-c)\varrho+\dv(\cu,\cv)=0,~y\in (-1,1),\\
		&-\epsilon\Delta_k(\cu+(U_s-c)\varrho)+ik(U_s-c)\cu+\cv U^\prime_s+ikm^{-2}\varrho=s_1,\\
		&-\epsilon\Delta_k\cv+ik(U_s-c)\cv+m^{-2}\partial_y\varrho=s_2,\\
		&\cv|_{y=\pm1}=0.
	\end{aligned}
	\right.
\end{equation}
Here $(s_1,s_2)$ is a given source term. We denote the operator on the left hand side of \eqref{3.1.1} by $\fq$.

As in \cite{YZ}, we can decouple $(\varrho,\cu,\cv)$ and reformulate the system \eqref{3.1.1} into an Orr-Sommerfeld type equation. In fact, by the first equation in  \eqref{3.1.1}, it is natural to define the effective stream function $\Phi$, which satisfies:
\begin{align}\label{3.1.2-1}
	\partial_y\Phi=\cu+(U_s-c)\varrho,~ -ik\Phi=\cv,~~ \Phi|_{y=\pm1}=0.
\end{align}
Then plugging \eqref{3.1.2-1}  into the second equation of \eqref{3.1.1}, we can express $\varrho$ in terms of $\Phi$ as 
\begin{align}\label{3.1.2-2}
	m^{-2}\varrho=-A^{-1}(y)\bigg[\frac{i}{n}\Delta_k\partial_y\Phi+(U_s-c)\partial_y\Phi-U_s'\Phi -(ik)^{-1}s_1\bigg],
	\end{align}
where the function $A(y)$ is defined in \eqref{2.1.4}. Substituting \eqref{3.1.2-2} into the third
 equation, we derive the following equation for $\Phi$:
\begin{equation}
	\text{OS}_{\text{CNS}}(\Phi)=\frac{i}{n}\Lambda(\Delta_k\Phi)+(U_s-c)\Lambda(\Phi)-\partial_y(A^{-1}U_s')\Phi=\Omega(s_1,s_2), ~y\in(-1,1).\label{3.1.2}
\end{equation}
Here $n=\frac{k}{\eps}$ is the rescaled frequency, and 
\begin{align*}
	\Lambda: H^2(-1,1)\cap H^1_0(-1,1)\rightarrow L^2(-1,1),\\
	\Lambda(\Phi)=\partial_y(A^{-1}\partial_y\Phi)-k^2\Phi
\end{align*}
is the modified vorticity, and the operator $\Omega$ is defined in \eqref{3.0.1-1}. To solve \eqref{3.1.2}, the following boundary conditions are imposed:
$$\Lambda(\Phi)|_{y=\pm1}=\Phi|_{y=\pm1}=0.$$
If $\Phi$ is a solution to \eqref{3.1.2}, then $(\varrho,\cu,\cv)$ is a solution to the quasi-compressible system \eqref{3.1.1}.

According to the above argument, it is suffcient to study the following Orr-Sommerfeld type equation.
\begin{equation}\label{3.1.3}
	\left\{
	\begin{aligned}
		&\text{OS}_{\text{CNS}}(\Phi)=h,~ y\in(-1,1),\\
		&\Phi |_{y=\pm1}=\Lambda(\Phi)|_{y=\pm1}=0.
	\end{aligned}
	\right.
\end{equation}
Here $h\in L^2(-1,1)$ is a given source. Once the solution $\Phi$ is obtained, the fluid quantities $(\varrho, \cu,\cv)$ can be recovered from \eqref{3.1.2-1} and \eqref{3.1.2-2}.

For the solvability of \eqref{3.1.3}, we have the following lemma.

\begin{lemma}\label{lem3.1}
Let the Mach number $m\in \left(0,\f{1}{\sqrt{3}}\right)$. There exists a unique solution $\Phi\in H^4(-1,1)\cap H^1_0(-1,1)$ to the Orr-Sommerfeld equation \eqref{3.1.3}. Moreover, the solution satisfies 
\begin{align}
	\|\partial_y\Phi, k\Phi\|_{L^2}+\|\Lambda\Phi\|_{L^2}&\leq C\eps^{-\f27}\|h\|_{L^2},\label{A1.1-1}\\
	\|\partial_y\Lambda(\Phi), k\Lambda(\Phi)\|_{L^2}&\leq C\eps^{-\f47}\|h\|_{L^2}.\label{A1.1-2}
\end{align}
\end{lemma}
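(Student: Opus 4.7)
\textbf{Proof proposal for Lemma \ref{lem3.1}.}

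The plan is to derive the a priori estimates \eqref{A1.1-1}--\eqref{A1.1-2} via an energy argument applied to the modified vorticity $W=\Lambda(\Phi)$, and then obtain existence and uniqueness in $H^4\cap H^1_0$ by a Fredholm alternative (or Galerkin scheme) for the fourth-order elliptic operator $\text{OS}_{\text{CNS}}$.

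First I would multiply \eqref{3.1.3} by $\overline{W}$ and integrate over $(-1,1)$. The critical observation is that both Dirichlet-type conditions $\Phi|_{y=\pm1}=0$ and $W|_{y=\pm1}=0$ are available, so every boundary term produced by integration by parts vanishes. The viscous contribution $\frac{i}{n}\int\Lambda(\Delta_k\Phi)\overline{W}\,\dd y$ is the fourth-order one; after integrating by parts twice, its leading piece becomes $-\frac{i}{n}\int A^{-1}\bigl(|\partial_y W|^2+k^2|W|^2\bigr)\dd y$, plus commutator remainders coming from the mismatch between $\Lambda$ and $\Delta_k$ through the non-constant coefficient $A^{-1}$. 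The Rayleigh-type term contributes $\int(U_s-c)|W|^2\dd y$, whose imaginary part is $-\text{Im}\,c\,\|W\|_{L^2}^2$, and the lower-order piece $-\int\partial_y(A^{-1}U_s')\Phi\,\overline{W}\,\dd y$ is handled by Cauchy--Schwarz after recovering $\Phi$ from $W$.

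Taking the imaginary part of the identity, the subsonic assumption $m\in(0,1/\sqrt 3)$ ensures that $\text{Re}(A^{-1})$ is uniformly positive and that the commutator terms involving $\partial_y A^{-1}$ can be absorbed, yielding
\begin{equation*}
\text{Im}\,c\,\|W\|_{L^2}^2+\frac{1}{n}\bigl(\|\partial_y W\|_{L^2}^2+k^2\|W\|_{L^2}^2\bigr)\lesssim \|h\|_{L^2}\|W\|_{L^2}+\text{l.o.t.}
\end{equation*}
To close the lower-order terms, I would couple this with the elliptic estimate $\|(\partial_y\Phi,k\Phi)\|_{L^2}\lesssim \|W\|_{L^2}$ obtained by testing $\Lambda(\Phi)=W$ against $\bar\Phi$ (again using $m<1$ so that $\text{Re}\,A^{-1}>0$ and $\Phi|_{y=\pm1}=0$ so that boundary terms vanish), together with the Poincaré inequality. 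Since $\text{Im}\,c\gtrsim \epsilon^{2/7}$ and $n\approx \epsilon^{-6/7}$, this produces $\|W\|_{L^2}\lesssim \epsilon^{-2/7}\|h\|_{L^2}$, and consequently $\|\partial_y W\|_{L^2}^2\lesssim n\,\|h\|_{L^2}\|W\|_{L^2}\lesssim \epsilon^{-8/7}\|h\|_{L^2}^2$, which are precisely \eqref{A1.1-1} and \eqref{A1.1-2}.

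Uniqueness then follows by setting $h=0$ in the a priori estimate. For existence, I would invoke the Fredholm alternative for the fourth-order elliptic operator $\text{OS}_{\text{CNS}}$ with clamped-type Dirichlet data $\Phi|_{y=\pm1}=\Lambda\Phi|_{y=\pm1}=0$ whose principal part is $\frac{i}{n}A^{-1}\partial_y^4$; uniqueness combined with Fredholm index zero gives existence in $H^4\cap H^1_0$. Equivalently, one may solve a Galerkin finite-dimensional problem using the basis of eigenfunctions of $\Lambda$ (or of $-\Delta_k$) with Dirichlet conditions, obtaining solvability from the uniform energy estimate and then passing to the limit by compactness. The main obstacle is the careful treatment of the fourth-order viscous term, because $\Lambda$ and $\Delta_k$ do not commute: the commutator involves $\partial_y A^{-1}$ and couples to lower-order derivatives of $\Phi$, so both the subsonic bound $m<1/\sqrt 3$ and sharp-constant Poincaré inequalities are needed to keep the quadratic form coercive while preserving the precise powers of $\epsilon$ announced in the lemma.
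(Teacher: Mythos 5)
The central step of your argument breaks down: multiplying \eqref{3.1.3} by the unweighted $\overline{W}=\overline{\Lambda(\Phi)}$ does not yield a usable energy identity. After taking the imaginary part, the viscous and transport terms give
\begin{equation*}
	\frac{1}{n}\bigl(\|\partial_y W\|_{L^2}^2+k^2\|W\|_{L^2}^2\bigr)+\text{Im}\,c\,\|W\|_{L^2}^2 \;\lesssim\; \Bigl|\int_{-1}^1\partial_y(A^{-1}U_s')\,\Phi\,\overline{W}\,\dd y\Bigr|+\|h\|_{L^2}\|W\|_{L^2}+\dots,
\end{equation*}
and, once you recover $\|\Phi\|_{L^2}\lesssim\|\partial_y\Phi\|_{L^2}\lesssim\|W\|_{L^2}$ as you propose, the stretching term on the right is bounded only by $C\|W\|_{L^2}^2$ with $C=O(1)$. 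But $\text{Im}\,c\approx\epsilon^{2/7}$ and $\tfrac{1}{n}\approx\epsilon^{6/7}$ are both vanishingly small, so $C\|W\|_{L^2}^2$ swamps the whole left-hand side and the estimate collapses. The stretching term is \emph{not} lower order; it is the dominant term, and Cauchy--Schwarz plus the elliptic estimate for $\Phi$ cannot save it.

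The paper resolves this precisely by multiplying instead by $-w\,\overline{\Lambda(\Phi)}$ with the weight $w=-[\partial_y(A^{-1}U_s')]^{-1}$. This choice makes the stretching term become exactly $-\int\Phi\,\overline{\Lambda(\Phi)}\,\dd y$, which after integration by parts equals $\int\overline{A^{-1}}|\partial_y\Phi|^2+k^2|\Phi|^2\,\dd y$: its real part is positive and of size $\|(\partial_y\Phi,k\Phi)\|_{L^2}^2$, turning the troublesome term into a coercive contribution, while its imaginary part is only $O(\epsilon^{4/7})\|\partial_y\Phi\|_{L^2}^2$, small enough to absorb. The transport term acquires the prefactor $-(U_s-c)w$, and the concavity $U_s''<0$ together with the subsonic threshold $m<1/\sqrt 3$ is used to prove $\text{Re}\,w\gtrsim1$ and the lower bound $w_0-U_sw_1\geq\gamma_0>0$ (Lemma \ref{lemA2}), which is what ultimately yields $\text{Im}\,I_2\gtrsim\epsilon^{2/7}\|\Lambda\Phi\|_{L^2}^2$. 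Your invocation of $m<1/\sqrt 3$ to assert positivity of $\text{Re}(A^{-1})$ both misattributes its role (positivity of $\text{Re}(A^{-1})$ essentially needs only $m<1$) and misses the actual role of the subsonic bound. A secondary inaccuracy: the leading viscous contribution is $-\tfrac{i}{n}\int w\bigl(|\partial_y W|^2+k^2|W|^2\bigr)\dd y$, not weighted by $A^{-1}$ as you wrote. The existence/uniqueness discussion (Fredholm/Galerkin, uniqueness from $h=0$) is a reasonable outline, but it is predicated on first having the a priori estimate, which your test function does not deliver.
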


\begin{proof} We only show the a priori estimates \eqref{A1.1-1} and \eqref{A1.1-2}. The existence and uniqueness parts can be proved in the same way as in Lemma 3.5 of \cite{YZ}.
	Set the following weight function.
\begin{align}\label{A1.1}
	w(y)=-[\partial_y(A^{-1}\partial_yU_s)]^{-1}.
\end{align}
Multiplying \eqref{3.1.3} by $-w\overline{\Lambda(\Phi)}$ and integrating the resultant equation over $(-1,1)$, we have:
\begin{align}\label{A1.2}
\underbrace{-\frac{i}{n}\int^1_{-1}w\Lambda(\Delta_k\Phi)\overline{\Lambda(\Phi)}\dd y}_{I_1}&+\underbrace{\int^1_{-1}-(U_s-c)w|\Lambda(\Phi)|^2\dd y}_{I_2}\nonumber\\
&+\underbrace{\int^1_{-1}-\Phi\overline{\Lambda(\Phi)}\dd y}_{I_3}+\underbrace{\int^1_{-1}hw\overline{\Lambda(\Phi)}\dd y}_{I_4}=0.
\end{align}
Now we estimate $I_i$, $i=1,2,3,4$,  term by term. First we consider $I_1$. 
Denote $[\Delta_k,\Lambda]=\Delta_k(\Lambda(\Phi))-\Lambda(\Delta_k(\Phi))$ and write $I_1$ as
\begin{align}
	I_1&=-\frac{i}{n}\int^1_{-1}w\Delta_k\left(\Lambda(\Phi)\right)\overline{\Lambda(\Phi)}\dd y+\frac{i}{n}\int^1_{-1}w[\Delta_k,\Lambda](\Phi)\overline{\Lambda(\Phi)}\dd y
	=I_{11}+I_{12}.\label{A1.3}
\end{align}
For the first term $I_{11}$, integrating by parts and using boundary conditions $\Lambda(\Phi)|_{y=\pm 1}=0$ give 
\begin{align}\label{A1.3-0}
	I_{11}=\frac{i}{n}\int^1_{-1}w(|\partial_y\Lambda(\Phi)|^2+k^2|\Lambda(\Phi)|^2)\dd y+\frac{i}{n}\int^1_{-1}w^\prime\partial_y\Lambda(\Phi)\overline{\Lambda(\Phi)}\dd y,
\end{align}
in which the last integral is bounded by 
\begin{align}\bigg|\frac{i}{n}\int^1_{-1}w^\prime\partial_y\Lambda(\Phi)\overline{\Lambda(\Phi)}dy\bigg|\leq C\eps^{
\f67}\|\partial_y\Lambda(\Phi)\|_{L^2}\|\Lambda(\Phi)\|_{L^2}.\label{A1.3-4}
\end{align}
Concerning $I_{12}$, we first represent the commutator $[\Delta_k, \Lambda]\Phi$ in terms of $\Lambda(\Phi)$, $\pa_y\Phi$ and $\Phi$:
\begin{align}
	[\Delta_k,\Lambda]\Phi&=\partial^3_y(A^{-1}\partial_y\Phi)-\partial_y(A^{-1}\partial^3_y\Phi)\nonumber\\
	&=2\partial_y(A^{-1})\partial^3_y\Phi+3\partial^2_y(A^{-1})\partial^2_y\Phi+\partial^3_y(A^{-1})\partial_y\Phi.\label{A1.3-1}
\end{align}
Then by using the relations
\begin{align}
\partial^2_y\Phi&=A\Lambda(\Phi)+A^{-1}\partial_y A\partial_y\Phi+k^2A\Phi,\label{A1.3-2}\\
\partial^3_y\Phi&=A\partial_y\Lambda(\Phi)+2\Lambda(\Phi)\partial_yA+\partial_y\Phi(A^{-1}\partial^2_yA+k^2A)+2k^2\Phi\partial_yA,\label{A1.3-3}
\end{align}
we obtain $|\pa_y^2\Phi|\approx |\Lambda(\Phi)|+\left|\left(\pa_y\Phi,k\Phi\right)\right|$, and $|\pa_y^3\Phi|\approx \left|\left(\pa_y\Lambda(\Phi),\Lambda(\Phi)\right)\right|+\left|\left(\pa_y\Phi,k\Phi\right)\right|$. Plugging this  into commutator \eqref{A1.3-1} and taking $L^2$ norm, we obtain
$$\left\|[\Delta_k,\Lambda]\Phi\right\|_{L^2}\leq C\left(\left\|\left(\partial_y\Lambda(\Phi),\Lambda(\Phi)\right)\right\|_{L^2}+\|(\partial_y\Phi,k\Phi)\|_{L^2}\right).$$
Thus, $I_{12}$ is bounded as
\begin{align}
	|I_{12}|&\leq\frac{C}{n}\|[\Delta_k,\Lambda]\Phi\|_{L^2}\|\Lambda(\Phi)\|_{L^2}\nonumber\\
	&\leq C\eps^{\f67}\|\Lambda(\Phi)\|_{L^2}\left(\left\|\left(\partial_y\Lambda(\Phi),\Lambda(\Phi)\right)\right\|_{L^2}+\|(\partial_y\Phi,k\Phi)\|_{L^2}\right).\label{A1.3-5}
\end{align}
By substituting estimates \eqref{A1.3-0}, \eqref{A1.3-4} for $I_{11}$ and \eqref{A1.3-5} for $I_{12}$  into \eqref{A1.3}, then taking real and imaginary parts of the result respectively, we deduce that
\begin{align}
|\text{Re} I_1|\leq& \frac{C\|\text{Im}w\|_{L^\infty}}{n}\left\|\partial_y\Lambda(\Phi)
\right\|^2_{L^2}+C\eps^{\f67}\|\Lambda(\Phi)\|_{L^2}\left(\left\|\left(\partial_y\Lambda(\Phi),\Lambda(\Phi)\right)\right\|_{L^2}+\|(\partial_y\Phi,k\Phi)\|_{L^2}\right)\nonumber\\
\leq &C\eps^{\f87}\left\|\partial_y\Lambda(\Phi)\right\|^2_{L^2}+C\eps^{\f67}\|\Lambda(\Phi)\|_{L^2}\left(\left\|\left(\partial_y\Lambda(\Phi),\Lambda(\Phi)\right)\right\|_{L^2}+\|(\partial_y\Phi,k\Phi)\|_{L^2}\right),\label{A1.4}
\end{align}
and
\begin{align}
\text{Im} I_1\geq&\frac{1}{n}\int_{-1}^1\text{Re}w\left|\left(\pa_y\Lambda(\Phi),k\Lambda(\Phi)\right)\right|^2\dd y-C\eps^{\f67}\|\Lambda(\Phi)\|_{L^2}\bigg(\left\|\left(\partial_y\Lambda(\Phi),\Lambda(\Phi)\right)\right\|_{L^2}+\|(\partial_y\Phi,k\Phi)\|_{L^2}\bigg)\nonumber\\
\gtrsim & \eps^{\f67}\left\|\left(\pa_y\Lambda(\Phi),k\Lambda(\Phi)\right)\right\|^2-C\eps^{\f67}\|\Lambda(\Phi)\|_{L^2}\bigg(\left\|\left(\partial_y\Lambda(\Phi),\Lambda(\Phi)\right)\right\|_{L^2}+\|(\partial_y\Phi,k\Phi)\|_{L^2}\bigg).\label{A1.5}
\end{align}
In \eqref{A1.4} and \eqref{A1.5}, we have used the fact that $|\text{Im}w|\lesssim|c|\lesssim \eps^{\f27}$ and $\text{Re}w\gtrsim 1$ respectively.

Next we estimate $I_2$. By using \eqref{ap7} and \eqref{ap10}, we can obtain
 $$\left|\text{Re}\bigg((U_s-c)w\bigg)\right|\lesssim 1,$$ and
$$\text{Im}\bigg(-(U_s-c)w\bigg)=\left(w_0-U_sw_1\right)\text{Im}c+O(1)|c|^2\gtrsim {\gamma}_0\text{Im} c+O(1)|c|^2\gtrsim \frac{\gamma_0}{2}\eps^{\f27}.$$
Therefore, by taking real and imaginary part of $I_2$ respectively, we deduce that:
\begin{align}
	|\text{Re} I_2|&\leq \left\|\text{Re}\bigg((U_s-c)w\bigg)\right\|_{L^\infty}\|\Lambda(\Phi)\|^2_{L^2}\leq C\|\Lambda(\Phi)\|_{L^2},\label{A1.6}
\end{align}
and
\begin{align}
	\text{Im} I_2&\geq \int_{-1}^1 \text{Im}\bigg(-(U_s-c)w\bigg) \left|\Lambda(\Phi)\right|^2\dd y\geq \frac{\gamma_0}{2}\eps^{\f27}\|\Lambda(\Phi)\|_{L^2}^2.
	\label{A1.7}
\end{align}

For $I_3$, integrating by parts and using the boundary conditions $\Phi|_{y=\pm 1}=0$ give
\begin{align}
	I_3=\int_{-1}^1\b{A}^{-1}|\pa_y\Phi|^2+k^2|\Phi|^2\dd y.\nonumber
\end{align}
Then we have
$$\b{A}^{-1}=(1-m^2U_s^2)^{-2}\left(1-m^2U_s^2-2m^2U_s\b{c}+O(1)|c|^2\right),
$$
which implies that $\text{Re}\left(\b{A}^{-1}\right)
\sim 1$ and $\text{Im}\left(\b{A}^{-1}\right)\gtrsim \left(m^2\text{Im}c\right)U_s-|c|^2$.
Thus, we obtain
\begin{align} \text{Re}I_3&=\int_{-1}^1\text{Re}\left(\b{A}^{-1}\right)|\pa_y\Phi|^2+k^2|\Phi|^2\dd y\gtrsim \|(\pa_y\Phi,k\Phi)\|_{L^2}^2,
	\label{A1.9}
\end{align}
and
\begin{align}
	\text{Im}I_3&=\int_{-1}^1\text{Im}\left(\b{A}^{-1}\right)|\pa_y\Phi|^2\dd y\gtrsim m^2\text{Im}c\left(\int_{-1}^1 U_s|\pa_y\Phi|^2\dd y\right)-|c|^2\|\pa_y\Phi\|_{L^2}^2\nonumber\\
	&\gtrsim -\eps^{\f47}\|\pa_y\Phi\|_{L^2}^2.\label{A1.10}
\end{align}
Finally, by Cauchy-Schwarz inequality, it holds that:
$$|I_4|\lesssim\|h\|_{L^2}\|\Lambda(\Phi)\|_{L^2}.$$
Thus, we have completed the estimates for $I_1$ to $I_4$. By plugging them into \eqref{A1.2} and taking real and imaginary parts respectively, we obtain
\begin{align}\label{A1.11}
	\|(\pa_y\Phi,k\Phi)\|_{L^2}^2\leq& C\|\Lambda(\Phi)\|_{L^2}^2+C\eps^{\f87}\left\|\pa_y\Lambda(\Phi)\right\|_{L^2}^2+C\|h\|_{L^2}^2\nonumber\\
	&+C\eps^{\f67}\|\Lambda(\Phi)\|_{L^2}\left(\left\|\left( \pa_y\Lambda(\Phi),\Lambda(\Phi) \right)\right\|_{L^2}+\|(\pa_y\Phi,k\Phi)\|_{L^2}
	\right)\nonumber\\
	\leq &C\|\Lambda(\Phi)\|_{L^2}^2+C\eps^{\f87}\left\|\pa_y\Lambda(\Phi)\right\|_{L^2}^2+C\|h\|_{L^2}^2,
\end{align}
and
\begin{align}\label{A1.12}
&\eps^{\f47}\left\|\left( \pa_y\Lambda(\Phi),k\Lambda(\Phi) \right)\right\|_{L^2}^2+\|\Lambda(\Phi)\|_{L^2}^2\nonumber\\
&\quad\leq \eps^{\f47}\|\Lambda(\Phi)\|_{L^2}\left(\left\|\left( \pa_y\Lambda(\Phi),\Lambda(\Phi) \right)\right\|_{L^2}+\|(\pa_y\Phi,k\Phi)\|_{L^2}
\right)\nonumber\\
&\qquad+\eps^{\f27}\|\pa_y\Phi\|_{L^2}^2+\eps^{-\f27}\|h\|_{L^2}\|\Lambda(\Phi)\|_{L^2}.\nonumber\\
&\quad\leq o(1)\|(\pa_y\Phi,k\Phi)\|_{L^2}^2+\eps^{-\f47}\|h\|_{L^2}^2.
\end{align}
Set
\begin{align}
	\mathfrak{B}:=\eps^{\f47}\left\|\left( \pa_y\Lambda(\Phi),k\Lambda(\Phi) \right)\right\|_{L^2}^2+\|\Lambda(\Phi)\|_{L^2}^2+\|(\pa_y\Phi,k\Phi)\|_{L^2}^2.\nonumber
\end{align}
 Combining \eqref{A1.11} and \eqref{A1.12} suitably, we have:
 \begin{align}
 	\mathfrak{B}\leq o(1)\mathfrak{B}+\eps^{-\f47}\|h\|_{L^2}.\nonumber
 \end{align}
For sufficiently small $\eps\ll 1$, it holds that $$\mathfrak{B}\leq C\eps^{-\f47}\|h\|_{L^2}^2.$$
This implies inequalities \eqref{A1.1-1} and \eqref{A1.1-2}. The proof of  lemma \ref{lem3.1}   is completed.
\end{proof}
With the solution $\Phi$ to Orr-Sommerfeld type equation \eqref{3.1.2}, we can recover fluid quantities $(\varrho,\cu,\cv)$ in the following corollary.
\begin{corollary}\label{cor1}
For any $s_1,s_2\in H^1(-1,1)$,  there exists a solution $(\varrho,\cu,\cv)\in H^2(-1,1)$ to the quasi-compressible system \eqref{3.1.1}. The solution satisfies the following estimates:
\begin{align}\label{A1.13}
\|\cu\|_{H^1}&+\|(m^{-2}\varrho,\cv)\|_{H^2}+\eps^{-\f17}\|\dv(\cu,\cv)\|_{H^1}\nonumber\\
&\leq C\eps^{-\f27}\|\Omega(s_1,s_2)\|_{L^2}+C\eps^{-\f17}\|s_1\|_{L^2}+C\|s_2\|_{L^2}+C\|\dv(s_1,s_2)\|_{L^2},
\end{align}
and
\begin{align}\label{A1.14}
\|\partial^2_y\cu\|_{L^2}\leq C\eps^{-\f47}\|\Omega(s_1,s_2)\|_{L^2}+C\eps^{-\f17}\|s_1\|_{L^2}+C\|s_2\|_{L^2}+C\|\dv(s_1,s_2)\|_{L^2}.
\end{align}
\end{corollary}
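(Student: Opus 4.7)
The plan is to produce the fluid triple $(\varrho,\cu,\cv)$ by reading it off from the Orr--Sommerfeld solution $\Phi$ furnished by Lemma \ref{lem3.1}, and then translate the bounds on $\Phi$ and $\Lambda(\Phi)$ into the stated bounds on the fluid variables, using the identities that relate $\pa_y^2\Phi,\pa_y^3\Phi$ to $\Lambda(\Phi),\pa_y\Lambda(\Phi)$. First, I set $h:=\Omega(s_1,s_2)\in L^2(-1,1)$ and apply Lemma \ref{lem3.1} to produce a unique $\Phi\in H^4(-1,1)\cap H^1_0(-1,1)$ solving \eqref{3.1.3} with the a priori bounds \eqref{A1.1-1}--\eqref{A1.1-2}. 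Guided by the derivation of the Orr--Sommerfeld equation, I define
\begin{align*}
\cv &:= -ik\Phi,\\
m^{-2}\varrho &:= -A^{-1}\Bigl[\tfrac{i}{n}\Delta_k\pa_y\Phi + (U_s-c)\pa_y\Phi - U_s'\Phi - (ik)^{-1}s_1\Bigr],\\
\cu &:= \pa_y\Phi - (U_s-c)\varrho.
\end{align*}
The first and second equations of \eqref{3.1.1} then hold by construction, the boundary condition $\cv|_{y=\pm1}=0$ is inherited from $\Phi|_{y=\pm1}=0$, and the third equation is equivalent to $\text{OS}_{\text{CNS}}(\Phi)=h$, which is exactly the content of Lemma \ref{lem3.1}.

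For the $H^2$ regularity of $\varrho$, the explicit formula above gives only $\varrho\in H^1$ because $\Phi\in H^4$ forces $\Delta_k\pa_y\Phi\in H^1$. However, once \eqref{3.1.1} is known to hold, the third equation can be solved for
\[
m^{-2}\pa_y\varrho = s_2 + \eps\Delta_k\cv - ik(U_s-c)\cv,
\]
whose right-hand side lies in $H^1$ since $\cv\in H^4$ and $s_2\in H^1$. This bootstraps $\varrho\in H^2$ and keeps $\cu\in H^1$ with $\pa_y^2\cu\in L^2$ through the relation $\cu=\pa_y\Phi-(U_s-c)\varrho$.

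For the quantitative estimates I translate the bounds of Lemma \ref{lem3.1} by means of the identities
\[
A^{-1}\pa_y^2\Phi = \Lambda(\Phi)+k^2\Phi - (A^{-1})'\pa_y\Phi,\qquad A^{-1}\pa_y^3\Phi = \pa_y\Lambda(\Phi)+\text{(l.o.t.)},
\]
obtained by differentiating $\Lambda(\Phi)=\pa_y(A^{-1}\pa_y\Phi)-k^2\Phi$. Together with Poincar\'e's inequality applied to $\Phi$ (which satisfies $\Phi|_{y=\pm1}=0$), these convert $\|\Lambda(\Phi)\|_{L^2}\lesssim\eps^{-2/7}\|h\|_{L^2}$ and $\|\pa_y\Lambda(\Phi)\|_{L^2}\lesssim\eps^{-4/7}\|h\|_{L^2}$ into control on $\pa_y^2\Phi$ and $\pa_y^3\Phi$ respectively. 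Substitution into $\cv=-ik\Phi$, the formula for $\varrho$, and the formula for $\cu$ then yields $\|\cv\|_{H^2}$, $\|m^{-2}\varrho\|_{L^2}$, $\|\cu\|_{H^1}$ with the scaling $\eps^{-2/7}\|h\|_{L^2}$ stated in \eqref{A1.13}, together with the lower-order contributions $\eps^{-1/7}\|s_1\|_{L^2}$ coming from the $(ik)^{-1}s_1$ term in the formula for $\varrho$, and $\|s_2\|_{L^2}$ coming from using the third momentum equation to estimate $\pa_y\varrho$. The $\eps^{-1/7}$ improvement in the bound for $\|\dv(\cu,\cv)\|_{H^1}$ comes from using equation one, $\dv(\cu,\cv)=-ik(U_s-c)\varrho$, which trades a derivative for a factor $k\sim\eps^{1/7}$. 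Finally, estimate \eqref{A1.14} is obtained by differentiating $\cu=\pa_y\Phi-(U_s-c)\varrho$ twice; the leading term $\pa_y^3\Phi$ is bounded via $\|\pa_y\Lambda(\Phi)\|_{L^2}\lesssim\eps^{-4/7}\|h\|_{L^2}$, which accounts for the weaker scaling $\eps^{-4/7}$.

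The main obstacle is precisely this $\pa_y^2\cu$ bound, since it forces us to pay the full $\eps^{-4/7}$ cost from Lemma \ref{lem3.1}, while every other quantity inherits the better scaling $\eps^{-2/7}$. A secondary subtlety is assembling the right-hand side of \eqref{A1.13} in the precise form stated: only $L^2$-norms of $s_1,s_2,\dv(s_1,s_2)$ are allowed, which requires exploiting the three equations of \eqref{3.1.1} in concert and carefully integrating by parts inside $\Omega(s_1,s_2)$ so that no $H^1$-norms of $s_1$ or $s_2$ appear. Analyticity of $c\mapsto u(\pm1;c)$ follows because every step in the construction (Lemma \ref{lem3.1}, the algebraic formulas, and the boundary evaluations) depends holomorphically on $c$ in the region where $\mathrm{Im}\,c>0$.
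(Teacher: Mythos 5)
Your proposal is correct and follows essentially the same route as the paper: define $\cv=-ik\Phi$, recover $\varrho$ from \eqref{3.1.2-2} and $\cu$ from \eqref{3.1.2-1}, translate the $\Lambda(\Phi)$ bounds of Lemma \ref{lem3.1} into $\pa_y^2\Phi,\pa_y^3\Phi$ bounds via \eqref{A1.3-2}--\eqref{A1.3-3}, and control $\pa_y\varrho$ through the third equation of \eqref{3.1.1}. The one place where your outline is slightly looser than the paper is the bound on $\pa_y^2\varrho$ needed for $\|m^{-2}\varrho\|_{H^2}$: the paper gets this by taking $\dv$ of the two momentum equations to obtain $-m^{-2}\Delta_k\varrho=-\dv(s_1,s_2)+k^2(U_s-c)^2\varrho+2k^2\Phi U_s'$, which produces the $\|\dv(s_1,s_2)\|_{L^2}$ term on the right-hand side cleanly; your route of differentiating the third momentum equation once more introduces $\|\pa_y s_2\|_{L^2}$, which you then need to rewrite as $\|\dv(s_1,s_2)-iks_1\|_{L^2}$ to land in the permitted norms---this works, but it is worth saying explicitly.
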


\begin{proof}
Let $\Phi$ be the solution to \eqref{3.1.2}. From \eqref{A1.1-1}, \eqref{A1.1-2}, \eqref{A1.3-2} and \eqref{A1.3-3}, we obtain
\begin{align}
	\|\partial^2_y\Phi\|_{L^2}\leq&C\|\Lambda(\Phi)\|_{L^2}+C\|(\partial_y\Phi,k\Phi)\|_{L^2}\leq C\eps^{-\f27}\|\Omega(s_1,s_2)\|_{L^2},\label{A1.15}
\end{align}
and
\begin{align}
	\|\partial^3_y\Phi\|_{L^2}&\leq C\|\partial_y\Lambda(\Phi)\|_{L^2}+C\|\Lambda(\Phi)\|_{L^2}+C\|(\partial_y\Phi,k\Phi)\|_{L^2}\leq C\eps^{-\f47}\|\Omega(s_1,s_2)\|_{L^2}.\label{A1.16}
\end{align}
From $\cv=-ik\Phi$, it holds that
\begin{align*}
	\|\cv\|_{H^2}&\leq C\|\partial^2_y\Phi\|_{L^2}+C\|(\partial_y\Phi,k\Phi)\|_{L^2}\leq C\eps^{-\f27}\|\Omega(s_1,s_2)\|_{L^2}.
\end{align*}
This gives \eqref{A1.13} for $\cv$. For the density $\varrho$, by using the explicit formula \eqref{3.1.2-2} and the elementary inequality
\begin{align}\label{w1}
	\|\Phi\|_{L^\infty}\leq\int^1_{-1}|\Phi^\prime(x)|\dd x\leq\sqrt{2}\|\Phi^\prime\|_{L^2},\text{ for any }\Phi\in H^1_0(-1,1),
	\end{align}
 we have
\begin{align}
	m^{-2}\|\varrho\|_{L^2}&\leq\frac{C}{n}\|\partial^3_y\Phi\|_{L^2}+C(1+\frac{k^2}{n})\|\partial_y\Phi\|_{L^2}+C\|\Phi\|_{L^\infty}\|\partial_yU_s\|_{L^2}+\frac{C}{k}\|s_1\|_{L^2}\nonumber\\
	&\leq\frac{C}{n}\|\partial^3_y\Phi\|_{L^2}+C\|\partial_y\Phi\|_{L^2}+\frac{C}{k}\|s_1\|_{L^2}\nonumber\\
	&\leq C\eps^{-\f27}\|\Omega(s_1,s_2)\|_{L^2}+C\eps^{-\f17}\|s_1\|_{L^2}.\label{A1.17}
\end{align}
Here we have used \eqref{A1.15} and \eqref{A1.16} in the last inequality of \eqref{A1.17}. For the  derivatives of $\varrho$, we firstly observe that
\begin{align*}
	-m^{-2}\partial_y\varrho&=\text{OS}_{\text{CNS}}(\Phi)+k^2\left[\frac{i}{n}\Delta_k\Phi+(U_s-c)\Phi\right]-\frac{1}{ik}\partial_y(A^{-1}s_1)\\
	&=-s_2+k^2\left[\frac{i}{n}\Delta_k\Phi+(U_s-c)\Phi\right].
\end{align*}
Then taking $L^2$-norm on  both side of the above  equation and using \eqref{A1.15} give
\begin{align}
	m^{-2}\|\partial_y\varrho\|_{L^2}&\leq C\|s_2\|_{L^2}+C\|\partial^2_y\Phi\|_{L^2}+Ck\|\Phi\|_{L^2}\nonumber\\
	&\leq C\eps^{-\f27}\|\Omega(s_1,s_2)\|_{L^2}+C\|s_2\|_{L^2}.\label{A1.18}
\end{align}
Moreover, by taking divergence of the last two equations in \eqref{3.1.1}, we can derive the following equation:
$$-m^{-2}\Delta_k\varrho=-\dv(s_1,s_2)+k^2(U_s-c)^2\varrho+2k^2\Phi U^\prime_s.$$
Thus taking $L^2$-norm and  using \eqref{A1.15} and \eqref{A1.17} yield that
\begin{align}
	m^{-2}\|\partial^2_y\varrho\|_{L^2}&\leq C\|\dv(s_1,s_2)\|_{L^2}+C(1+m^{-2})\|\varrho\|_{L^2}+Ck\|\Phi\|_{L^2}\nonumber\\
	&\leq C\|\dv(s_1,s_2)\|_{L^2}+ C\eps^{-\f27}\|\Omega(s_1,s_2)\|_{L^2}+C\eps^{-\f17}\|s_1\|_{L^2}.\label{A1.19}
\end{align}
Combining \eqref{A1.17}, \eqref{A1.18} and \eqref{A1.19} together yields \eqref{A1.13} for $\varrho$. Concerning about the divergence field $\dv(\cu,\cv)$, we have from the continuity equation $\eqref{3.1.1}_1$ that
$$\eps^{-\f17}\|\dv(\cu,\cv)\|_{H^1}\leq C\|\varrho\|_{H^1}\leq C\eps^{-\f27}\|\Omega(s_1,s_2)\|_{L^2}+C\eps^{-\f17}\|s_1\|_{L^2}+C\|s_2\|_{L^2}.$$

Finally, we estimate $\cu$. From \eqref{3.1.2-1}, it holds that
$|\cu|\sim |\pa_y\Phi|+|\varrho|$. Thus,  by using \eqref{A1.15}, \eqref{A1.16} and \eqref{A1.17}-\eqref{A1.19}, we  obtain
\begin{align}
	\|\cu\|_{H^1}\leq C\|\partial_y\Phi\|_{H^1}+C\|\varrho\|_{H^1}
	\leq C\eps^{-\f27}\|\Omega(s_1,s_2)\|_{L^2}+C\eps^{-\f17}\|s_1\|_{L^2}+C\|s_2\|_{L^2},\nonumber
\end{align}
and
\begin{align*}
	\|\partial^2_y\cu\|_{L^2}&\leq C\|\partial^3_y\Phi\|_{L^2}+C\|\varrho\|_{H^2}\leq C\eps^{-\f47}\|\Omega(s_1,s_2)\|_{L^2}+C\eps^{-\f17}\|s_1\|_{L^2}+C\|s_2\|_{L^2}+C\|\dv(s_1,s_2)\|_{L^2}.
\end{align*}
Therefore,  the estimates \eqref{A1.13} and \eqref{A1.14} for $\cu$ are obtained.  
This completes the proof of the corollary.
\end{proof}
It is straightforward to see that the quasi-compressible operator $\fq$ generates the error
\begin{align}\label{Eq}
	\vec{E}_{\fq}(\varrho,\cu,\cv)&\eqdef \CL(\varrho,\cu,\cv)-\fq(\varrho,\cu,\cv)\nonumber\\
	&=\bigg(0,\eps\Delta_k\left((U_s-c)\varrho\right)-\lambda\eps ik \dv(\cu,\cv)+\eps U_s''\varrho,-\eps\lambda\pa_y\dv(\cu,\cv)\bigg),
\end{align}
which is not in $H^1(-1,1)$. This fact prevents us from iterating $\fq$ directly to solve \eqref{3.1}. In the next subsection, we will introduce the Stokes operator to recover the regularity.
\subsection{Stokes approximation}

To smooth out the error term \eqref{Eq} generated by the quasi-compressible system \eqref{3.1.1} , we introduce the following Stokes approximation:
\begin{equation}\label{A2.1}
	\left\{
	\begin{aligned}
		&ik(U_s-c)\CP+\dv(\CU,\CV)=q_0, ~y\in (-1,1),\\
		&-\epsilon\Delta_k\CU-\lambda ik\epsilon \dv(\CU,\CV)+ik(U_s-c)\CU+(ikm^{-2}+\epsilon U^{\prime\prime}_s)\CP=q_1,\\
		&-\epsilon\Delta_k\CV-\lambda\epsilon\partial_y \dv(\CU,\CV)+ik(U_s-c)\CV+m^{-2}\partial_y\CP=q_2,\\
		&\partial_y\CU|_{y=\pm1}=\CV|_{y=\pm1}=0.
	\end{aligned}
	\right.
\end{equation}
Here $(q_0,q_1,q_2)$ is any given inhomogenuous source term. Compared with the original resolvent problem \eqref{3.1}, we remove the stretching term $\CV U_s'$ in the Stokes system \eqref{A2.1}. The solvability of Stokes system is summarized as follows.

\begin{lemma}\label{prop3}
The Stokes system \eqref{A2.1} admits a unique solution $(\CP,\CU,\CV)\in H^1(-1,1)\times(H^2(-1,1))^2$. Moreover, $(\CP,\CU,\CV)$ satisfies the following estimates:
\begin{align}
	\|(m^{-1}\CP,\CU,\CV)\|_{L^2}&\leq C\eps^{-\f37}\|(m^{-1}q_0,q_1,q_2)\|_{L^2},\label{3.2.0-1}\\
	\|(\partial_y \CU, k\CU)\|_{L^2}+\|(\partial_y\CV,k\CV)\|_{L^2}&\leq C\eps^{-\f57}\|(m^{-1}q_0,q_1,q_2)\|_{L^2},\label{3.2.0-2}\\
	\|div_k(\CU,\CV)\|_{H^1}+m^{-2}\|\partial_y\CP\|_{L^2}&\leq C\eps^{-\f27}\|(m^{-2}q_0,q_1,q_2)\|_{L^2}+C\|\pa_yq_0\|_{L^2},\label{3.2.0-3}\\
	\|(\Delta_k\CU,\Delta_k\CV)\|_{L^2}&\leq C\eps^{-\f97}\|(m^{-1}q_0,q_1,q_2)\|_{L^2}+C\|\pa_yq_0\|_{L^2}.\label{3.2.0-4}
\end{align}
\end{lemma}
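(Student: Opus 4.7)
The Stokes system \eqref{A2.1} is simpler than the full resolvent problem \eqref{3.1} because the vorticity–stretching term $\CV U_s'$ has been removed and the tangential-velocity boundary condition is slip rather than no-slip, both of which enable a direct energy estimate. The plan is to establish \eqref{3.2.0-1}--\eqref{3.2.0-4} as a priori bounds first, then deduce existence and uniqueness by a standard Galerkin or Fredholm argument. Multiplying the second equation of \eqref{A2.1} by $\bar\CU$, the third by $\bar\CV$, and integrating over $(-1,1)$, the boundary conditions $\pa_y\CU|_{y=\pm1}=\CV|_{y=\pm1}=0$ kill all boundary contributions from the Laplacians. The key manipulation is to integrate by parts in $m^{-2}\pa_y\CP\,\bar\CV$ and combine with $ikm^{-2}\CP\,\bar\CU$, producing $-m^{-2}\CP\,\overline{\dv(\CU,\CV)}$; substituting the continuity equation $\dv(\CU,\CV)=q_0-ik(U_s-c)\CP$ converts this into $-m^{-2}\CP\,\bar q_0 - ikm^{-2}(U_s-\bar c)|\CP|^2$. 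Taking real parts, the convective contributions $ik(U_s-c)(|\CU|^2+|\CV|^2)$ together with the pressure piece assemble into the coercive quantity $k\,\text{Im}\,c\,(\|\CU\|_{L^2}^2+\|\CV\|_{L^2}^2+m^{-2}\|\CP\|_{L^2}^2)$. Since $k\,\text{Im}\,c\approx\eps^{3/7}$ by \eqref{c}, absorbing the perturbative $\eps U_s''\CP\bar\CU$ term yields \eqref{3.2.0-1}. Taking imaginary parts extracts the dissipation coercivity $\eps(\|(\pa_y\CU,k\CU)\|_{L^2}^2+\|(\pa_y\CV,k\CV)\|_{L^2}^2)$, which combined with \eqref{3.2.0-1} delivers \eqref{3.2.0-2}.

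For \eqref{3.2.0-3}, the continuity equation immediately yields $\|\dv(\CU,\CV)\|_{L^2}\leq \|q_0\|_{L^2}+Ck|U_s-c|\,\|\CP\|_{L^2}$, so the $L^2$ part follows from \eqref{3.2.0-1} and $k\approx\eps^{1/7}$. To control $\pa_y\dv(\CU,\CV)$ and $m^{-2}\pa_y\CP$, differentiate the continuity equation to get $\pa_y\dv(\CU,\CV)=\pa_y q_0-ikU_s'\CP-ik(U_s-c)\pa_y\CP$, and use the third equation to express $m^{-2}\pa_y\CP$ in terms of $q_2$, $\eps\Delta_k\CV$, $\eps\lambda\,\pa_y\dv(\CU,\CV)$, and $\CV$. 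The resulting $2\times 2$ algebraic system in $(\pa_y\dv,\pa_y\CP)$ has coefficient matrix of the form $\mathrm{Id}+O(k^2\eps)$, hence is invertible for small $\eps$; inserting \eqref{3.2.0-1}--\eqref{3.2.0-2} together with the source $\pa_y q_0$ gives \eqref{3.2.0-3}. Finally, rearranging the momentum equations to isolate $\eps\Delta_k\CU$ and $\eps\Delta_k\CV$ expresses these quantities as linear combinations of $q_1, q_2, \dv(\CU,\CV), \CU, \CV, \CP$; dividing by $\eps$ and substituting \eqref{3.2.0-1}--\eqref{3.2.0-3} yields \eqref{3.2.0-4}.

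The delicate point is the treatment of the pressure in the $L^2$ step. Because the system is compressible, one cannot project onto divergence-free fields to eliminate $\CP$; instead, the coercivity of $\CP$ at the same $\eps^{-3/7}$ scale as the velocity is obtained entirely through the continuity equation, which converts the cross term $m^{-2}\CP\,\overline{\dv(\CU,\CV)}$ into the coercive $k\,\text{Im}\,c\,m^{-2}|\CP|^2$. The bookkeeping of the weights $m^{-1}$ and $m^{-2}$ on the right-hand sides of \eqref{3.2.0-1}--\eqref{3.2.0-4} has to be matched to this mechanism through Cauchy--Schwarz; beyond that, verification of the precise powers of $\eps$ is a matter of careful tracking using $k\approx\eps^{1/7}$ and $\text{Im}\,c\approx\eps^{2/7}$ from \eqref{c}.
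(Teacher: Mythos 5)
Your plan for \eqref{3.2.0-1} is essentially the same energy estimate as the paper's (test the momentum equations against $\bar\CU,\bar\CV$, integrate by parts, substitute the continuity equation to convert $-m^{-2}\CP\,\overline{\dv(\CU,\CV)}$ into a coercive pressure term). However, there is a concrete error in your passage to \eqref{3.2.0-2}: you claim that ``taking imaginary parts extracts the dissipation coercivity $\eps(\|(\pa_y\CU,k\CU)\|_{L^2}^2+\|(\pa_y\CV,k\CV)\|_{L^2}^2)$.'' After integration by parts, these dissipation terms are real, not imaginary; the imaginary part of the resulting identity contains no dissipation at all. In fact, as in the paper, both the dissipation $\eps\|(\pa_y\CU,k\CU)\|^2_{L^2}$ and the coercive $k\,\text{Im}\,c\,\|(\CU,\CV,m^{-1}\CP)\|^2_{L^2}$ appear together in the real part of the identity, and \eqref{3.2.0-1} and \eqref{3.2.0-2} both follow from that single real-part estimate. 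If you attempted to run the imaginary-part step as written you would find nothing there to close the dissipation bound.

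For \eqref{3.2.0-3}--\eqref{3.2.0-4} you take a genuinely different route from the paper. The paper introduces the vorticity $\omega=\pa_y\CU-ik\CV$, rewrites the momentum equations in terms of $\omega$ and $\CD=\dv(\CU,\CV)$, tests the rewritten system against $\overline{ik\CP}$, $\pa_y\bar\CP$ (and later $\pa_y\bar\omega$, $\overline{ik\omega}$), and crucially exploits the boundary cancellation $\omega|_{y=\pm1}=0$ coming from the slip condition $\pa_y\CU|_{\pm1}=0$ and $\CV|_{\pm1}=0$. Your proposal instead inverts the equations pointwise-algebraically: differentiate the continuity equation to express $\pa_y\dv(\CU,\CV)$, rearrange the normal momentum equation to isolate $m^{-2}\pa_y\CP$, and solve the coupled $2\times2$ system. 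This can be made to work, but the key step you leave implicit is that the offending second derivative $\eps\Delta_k\CV$ in the normal momentum equation must first be rewritten as $\eps\big(\pa_y\dv(\CU,\CV)-ik\pa_y\CU-k^2\CV\big)$ so that the only terms remaining on the right are $\pa_y\dv$, already-controlled first derivatives, and the sources; otherwise the uncontrolled $\eps\pa_y^2\CV$ prevents the argument from closing. (Incidentally the perturbation of the identity matrix you will then see is of size $O(\eps k)$ from $ik(1+\lambda)\eps(U_s-c)$, not $O(k^2\eps)$, but this still vanishes as $\eps\to 0$ so invertibility holds.) With that substitution made explicit, the algebraic inversion does reproduce \eqref{3.2.0-3} and \eqref{3.2.0-4} at the correct $\eps$-orders. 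The trade-off relative to the paper's method: the algebraic route avoids having to invoke the vorticity boundary condition and is slightly more elementary, but relies on the lucky structure that $\Delta_k\CV$ can be reduced to $\pa_y\dv$ plus lower-order terms; the energy method is more robust to changes in the boundary conditions.
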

\begin{remark}
	In view of \eqref{3.2.0-1}-\eqref{3.2.0-3}, the divergence $\dv(\CU,\CV)$ and density $\CP$ have stronger estimates than other components. This indicates the absence of boundary layers for these two components.
\end{remark}
\begin{proof} We follow the proof in \cite{YZ}. Multiplying the second and third equations by $-\bar{\CU}$ and $-\bar{\CV}$ respectively and integrating by parts, we obtain:
\begin{align}
	&\epsilon\bigg(\|\partial_y\CU,k\CU\|^2_{L^2}+\|\partial_y\CV,k\CV\|^2_{L^2}\bigg)+\lambda\epsilon\|\dv(\CU,\CV)\|^2_{L^2}\nonumber\\
	&+\underbrace{ik\int^1_{-1}(U_s-c)\bigg(|\CU|^2+|\CV|^2\bigg)\dd y}_{J_1}+\underbrace{m^{-2}\int^1_{-1}-\CP\overline{\dv(\CU,\CV)}\dd y}_{J_2}\nonumber\\
	&\qquad=\underbrace{-\int^1_{-1}(q_1+\epsilon U_s''\CP)\bar{\CU}+q_2\bar{\CV}\dd y}_{J_3}.\label{3.2.1}
\end{align}
The real part of $J_1$ gives
\begin{align}\label{A2.4}
	\text{Re}J_1=k\text{Im}c\left\|(\CU,\CV)\right\|_{L^2}^2\gtrsim \eps^{\f37}\|(\CU,\CV)\|_{L^2}^2.
\end{align}
For $J_2$, by using the continuity equation $\eqref{A2.1}_1$, we have
$$\overline{\dv(\CU,\CV)}=ik(U_s-\bar{c})\bar{\CP}+\bar{q_0}.$$
Plugging this  into $J_2$ and taking real part give
\begin{align}
	\text{Re}J_2&=-ikm^{-2}\int^1_{-1}(U_s-\bar{c})|\CP|^2\dd y-m^{-2}\int^1_{-1}\CP\bar{q_0}\dd y\nonumber\\
	&\geq k \text{Im}c\|m^{-1}\CP\|^2_{L^2}-m^{-2}\|\CP\|_{L^2}\|q_0\|_{L^2}\nonumber\\
	&\gtrsim \eps^{\f37}\|m^{-1}\CP\|_{L^2}^2-m^{-2}\|\CP\|_{L^2}\|q_0\|_{L^2}.\label{A2.3}
\end{align}
By Cauchy-Schwarz inequality, $J_3$ is bounded by
\begin{align}\label{A2.2}
	|J_3|\lesssim\|(\CU,\CV)\|_{L^2}\|(q_1,q_2)\|_{L^2}+\epsilon\bigg(\|\CP\|^2_{L^2}+\|\CU\|^2_{L^2}\bigg).
\end{align}
Therefore, by substituting \eqref{A2.4}, \eqref{A2.3}, \eqref{A2.2} into \eqref{3.2.1} and then taking the real part, we deduce that
\begin{align*}
	\epsilon\bigg(\|\partial_y\CU,k\CU\|^2_{L^2}&+\|\partial_y\CV,k\CV\|^2_{L^2}\bigg)+\lambda\epsilon\|\dv(\CU,\CV)\|^2_{L^2}+\eps^{\f37}\|(m^{-1}\CP,\CU,\CV)\|_{L^2}^2\\
	&\leq C\|(m^{-1}\CP,\CU,\CV)\|_{L^2}\|(m^{-1}q_0,q_1,q_2)\|_{L^2}\\
	&\leq o(1)\eps^{\f37}\|(m^{-1}\CP,\CU,\CV)\|_{L^2}^2+C\eps^{-\f37}\|m^{-1}q_0,q_1,q_2)\|_{L^2}^2.
\end{align*}
Absorbing the first term on the right hand side to the left gives \eqref{3.2.0-1} and \eqref{3.2.0-2}.

Next we estimate the density and divergence part of the solution. For this, we denote the vorticity
by 
$$\omega=\partial_y\CU-ik\CV$$
and the divergence of $(\CU,\CV)$  by $\CD=\dv(\CU,\CV)$.  The diffusion in \eqref{A2.1} can be expressed in terms of $\CD$ and $\omega$:
$$\Delta_k\CU=\partial_y\omega+ik\CD,~ \Delta_k\CV=-ik\omega+\partial_y\CD.$$
Thus we can rewrite the momentum equations in \eqref{A2.1} as 
\begin{equation}\label{A2.12}
	\left\{
	\begin{aligned}
		&ikm^{-2}\CP=\epsilon\partial_y\omega+\epsilon(1+\lambda)ik\CD-ik(U_s-c)\CU-q_1-\epsilon U^{\prime\prime}_s\CP,\\
		&m^{-2}\partial_y\CP=-\epsilon ik\omega+\epsilon(1+\lambda)\partial_y\CD-ik(U_s-c)\CV-q_2.
	\end{aligned}
	\right.
\end{equation}
By taking inner product of the first and second equations with $\overline{ik{\CP}}$ and $\partial_y\bar{\CP}$ respectively, we obtain:
\begin{align}
	m^{-2}&\|(\partial_y\CP,k\CP\|^2_{L^2}=\underbrace{\epsilon\int^1_{-1}\partial_y\omega \overline{ik{\CP}}-ik\omega\partial_y\bar{\CP}\dd y}_{J_4}+\underbrace{\epsilon(1+\lambda)\int^1_{-1}k^2\CD\bar{\CP}+\partial_y\CD\partial_y\bar{\CP}\dd y}_{J_5}\nonumber\\
	&+\underbrace{\int^1_{-1}(q_1+\epsilon U^{\prime\prime}_s\CP)ik\bar{\CP}-q_2\partial_y\bar{\CP}\dd y}_{J_6}+\underbrace{\int^1_{-1}ik(U_s-c)(ik\bar{\CP}\CU-\CV\partial_y\bar{\CP})\dd y}_{J_7}.\label{A2.7}
\end{align}
Integrating by parts and using the boundary condition $\omega|_{y=\pm 1}=0$ give
\begin{align}\label{A2.5}
	J_4=-ik\epsilon \b{\CP} \omega\big|_{y=-1}^{y=1}=0.
\end{align}
From the continuity equation $\eqref{A2.1}_1$, it holds that
$\CD=-ik(U_s-c)\CP+q_0,$
and $	\partial_y\CD=-ik(U_s-c)\partial_y\CP-ikU^\prime_s\CP+\partial_yq_0.$ Plugging these  into $J_5$, we obtain
\begin{align}
	J_5=&-\epsilon(1+\lambda)\int^1_{-1}ik(U_s-c)(|\partial_y\CP|^2+k^2|\CP|^2)\dd y\nonumber\\
	&-ik\epsilon(1+\lambda)\int^1_{-1}U^\prime_s\CP\partial_y\CP \dd y+\epsilon(1+\lambda)\int^1_{-1}\left(\partial_y\bar{\CP}\partial_yq_0+k^2q_0\bar{\CP}\right)\dd y.\label{A2.6}
\end{align}
By Cauchy-Schwarz and Young's inequalities, the last two terms on the right hand side of $J_5$ are bounded by 
$$\bigg|ik\epsilon(1+\lambda)\int^1_{-1}U^\prime_s\CP\partial_y\CP dy\bigg|\leq C\epsilon\|(\partial_y\CP,k\CP)\|^2_{L^2},$$
and
\begin{align*}
	\bigg|\epsilon(1+\lambda)\int^1_{-1}\partial_y\bar{\CP}\partial_yq_0+k^2q_0\bar{\CP}dy\bigg|\leq&C\epsilon\|(\partial_y\CP,k\CP)\|_{L^2}\|(\partial_yq_0,kq_0)\|_{L^2}\\
	\leq& \frac{m^{-2}}{8}\|(\partial_y\CP,k\CP)\|^2_{L^2}+Cm^2\epsilon^2\|(\partial_yq_0,kq_0)\|^2_{L^2}.
\end{align*}
Therefore, by taking real part in \eqref{A2.6}, we deduce
\begin{align}
	\text{Re}J_5&\leq-\eps k\text{Im}c(1+\lambda)\|(\partial_y\CP,k\CP)\|^2_{L^2}+C\bigg(\epsilon+\frac{m^{-2}}{8}\bigg)\|(\partial_y\CP,k\CP)\|^2_{L^2}+Cm^2\epsilon^2\|(\partial_yq_0,kq_0)\|^2_{L^2}\nonumber\\
	&\leq\frac{m^{-2}}{4}\|(\partial_y\CP,k\CP)\|^2_{L^2}+Cm^2\epsilon^2\|(\partial_yq_0,kq_0)\|^2_{L^2}.\label{A2.8}
\end{align}
Moreover, by Cauchy-Schwarz and Young's inequalities again, it holds that
\begin{align}
	|J_6|+|J_7|&\leq\bigg(\|q_1,q_2)\|_{L^2}+k\|(\CU,\CV)\|_{L^2}\bigg)\|(\partial_y\CP,k\CP)\|_{L^2}+C\eps^{\f67}\|k\CP\|^2_{L^2}\nonumber\\
	&\leq\frac{m^{-2}}{4}\|(\partial_y\CP,k\CP)\|^2_{L^2}+Cm^2\bigg(\|(q_1,q_2)\|^2_{L^2}+k^2\|(\CU,\CV)\|^2_{L^2}\bigg).\label{A2.9}
\end{align}
By combining the estimates \eqref{A2.5}, \eqref{A2.8} and \eqref{A2.9} for $J_4$ to $J_7$, and taking the real part of \eqref{A2.7}, we obtain that
\begin{align}\label{A2.10}
	m^{-2}\|(\partial_y\CP,k\CP)\|_{L^2}&\leq C\|(q_1,q_2)\|_{L^2}+C\epsilon\|(\partial_yq_0,kq_0)\|_{L^2}+Ck\|(\CU,\CV)\|_{L^2}\nonumber\\
	&\leq C\eps^{-\f27}\|(m^{-1}q_0,q_1,q_2)\|_{L^2}+C\eps\|\pa_yq_0\|_{L^2}.
\end{align}
Here we have used \eqref{3.2.0-1} for $(\CU,\CV)$ in the last line of \eqref{A2.10}.
For the divergence, it holds from $\eqref{A2.1}_1$ and \eqref{A2.10} that
\begin{align}\label{A2.11}
	\|\dv(\CU,\CV)\|_{H^1}&\leq C\|(\partial_y\CP,k\CP)\|_{L^2}+C\|q_0\|_{H^1}\nonumber\\
	&\leq  C\eps^{-\f27}\|(m^{-1}q_0,q_1,q_2)\|_{L^2}+C\|\pa_yq_0\|_{L^2}.
\end{align}
Putting bounds \eqref{A2.10} and \eqref{A2.11} together gives \eqref{3.2.0-3}.

Finally, we estimate second order derivatives of $(\CU,\CV)$. By taking inner product of the first and second equations of \eqref{A2.12} with $\pa_y\b{\omega}$ and $\overline{ik\omega}$ respectively, we can deduce
\begin{align}
	\epsilon\|(\partial_y\omega,k\omega)\|^2_{L^2}=&\int^1_{-1}(q_1+\epsilon U^{\prime\prime}_s\CP)\partial_y\bar{\omega}+q_2\overline{ik\omega}\dd y+\int^1_{-1}ik(U_s-c)(\CU\partial_y\b{\omega}+\CV \overline{ik{\omega}})\dd y\nonumber\\
	\leq&C\bigg(\|(q_1,q_2)\|_{L^2}+\epsilon\|\CP\|_{L^2}+k\|(\CU,\CV)\|_{L^2}\bigg)\|(\partial_y\omega,k\omega)\|_{L^2}.\label{A2.14}
\end{align}
Thus by using \eqref{3.2.0-1} and \eqref{A2.14}, we have
\begin{align}
	\|(\partial_y\omega,k\omega)\|_{L^2}&\leq C\eps^{-1}\|(q_1,q_2)\|_{L^2}+C\eps^{-\f67}\|(m^{-1}\CP,\CU,\CV)\|_{L^2}\nonumber\\
	&\leq C\eps^{-\f97}\|(m^{-1}q_0,q_2,q_2)\|_{L^2}.\label{A2.13}
\end{align}
The estimate \eqref{3.2.0-4} follows from \eqref{A2.11} and \eqref{A2.13}. Therefore, the proof of the lemma  is completed.
\end{proof}

Note that  the error generated by $\fs$ is
\begin{align}\label{Es}
		\vec{E}_\fs(\CP,\CU,\CV)\eqdef\CL(\CP,\CU,\CV)-\fs(\CP,\CU,\CV)
	=\left(0,\CV U_s',0\right).
\end{align}
By Lemma \ref{prop3},  we have $\vec{E}_{\fs}\in H^2(-1,1)$.

\subsection{Solvability of resolvent problem}
In this subsection, we solve the resolvent problem \eqref{3.1} by alternatively iterating the above  Quasi-compressible $\fq$ and Stokes operators $\fs.$  First of all, we construct the solution $(\rho,u,v)$ to \eqref{3.1} for any given inhomogeneous source terms $f_u,f_v\in L^2$, and show the estimate \eqref{3.0.1}.  At the zeroth step, we introduce the Stokes solution $\vec{\Xi}_0=(\CP_0,\CU_0,\CV_0)$, which solves the following system:
 \begin{align}\label{C1.0}
 \fs(\CP_0,\CU_0,\CV_0)=(0,f_u,f_v).
 \end{align}
Recall the error operator $\vec{E}_{\fs}$ defined in \eqref{Es}. The error generated at this step is
\begin{align}\label{C1.1}
	\vec{E}_\fs(\CP_0,\CU_0,\CV_0)
	=\left(0,\CV_0U_s',0\right).
\end{align}
Since $\vec{E}_{\fs}(\CP_0,\CU_0,\CV_0)\in H^2(-1,1)$,   we can introduce the solution $(\varrho_1,\cu_1,\cv_1)$ to the quasi-compressible system:
\begin{align}\label{C1.2}
\fq(\varrho_1,\cu_1,\cv_1)=-\vec{E}_{\fs}(\CP_0,\CU_0,\CV_0).
\end{align}
The error term generated by $\fq$ is 
\begin{align}\label{C1.3}
\vec{E}_{\fq}(\varrho_1,\cu_1,\cv_1)=\left( 0,\eps\Delta_k\left[(U_s-c)\varrho_1\right]-\eps \lambda ik\dv(\cu_1,\cv_1)+\eps U_s''\varrho_1,\eps\lambda\pa_y\dv(\cu_1,\cv_1) \right).
\end{align}
In view of Corollary \ref{cor1}, the right hand side of \eqref{C1.3} is in $L^2(-1,1)$ instead of $H^1(-1,1)$, which forbids us to iterate $\fq$ directly. In order to recover the regularity, we introduce $(\CP_1,\CU_1,\CV_1)$ as the solution to Stokes approximation
\begin{align}\label{C1.4}
\fs(\CP_1,\CU_1,\CV_1)=-\vec{E}_{\fq}.
\end{align}
Now we define the following corrector $\vec{\Xi}_1$ at the Step 1:
\begin{align}\label{C1.5}
\vec{\Xi}_{1}=(\rho_1,u_1,v_1)\eqdef(\varrho_1,\cu_1,\cv_1)+(\CP_1,\CU_1,\CV_1).
\end{align}
One can check that the error term generated at this step is
\begin{align}\label{C1.6}
	\vec{\CE}_1&\eqdef \CL(\vec{\Xi}_0+\vec{\Xi}_1)-(0,f_u,f_v)\nonumber\\
	&=\vec{E}_{\fs}(\CP_1,\CU_1,\CV_1)=(0,\CV_1U_s',0).
\end{align}

Approximate solutions up to any order can be constructed by induction. Suppose that at $N$-th step,  we have the corrector 
\begin{align}\label{C1.7}
	\vec{\Xi}_{N}=(\rho_N,u_N,v_N)\eqdef(\varrho_N,\cu_N,\cv_N)+(\CP_N,\CU_N,\CV_N),
\end{align}
and the error
\begin{align}\label{C1.8}
	\vec{\CE}_N\eqdef \CL\left(\sum_{j=0}^N\vec{\Xi}_j\right)-(0,f_u,f_v)=(0,\CV_NU_s',0).
\end{align}
Then we define the $(N+1)$-th step corrector as 
\begin{align}
\vec{\Xi}_{N+1}=(\rho_{N+1},u_{N+1},v_{N+1})\eqdef(\varrho_{N+1},\cu_{N+1},\cv_{N+1})+(\CP_{N+1},\CU_{N+1},\CV_{N+1}),\label{C1.9}
\end{align}
where $(\varrho_{N+1},\cu_{N+1},\cv_{N+1})$ is the solution to  quasi-compressible system
\begin{align}\label{C1.10}
	\fq(\varrho_{N+1},\cu_{N+1},\cv_{N+1})=-\vec{\CE}_N,
\end{align}
that cancels the error $\vec{\CE}_N$ \eqref{C1.8} generated from $N$-th step, and $(\CP_{N+1},\CU_{N+1},\CV_{N+1})$ solves the Stokes system
\begin{align}\label{C1.11}
	&\fs(\CP_{N+1},\CU_{N+1},\CV_{N+1})=-\vec{E}_{\fq}(\varrho_{N+1},\cu_{N+1},v_{N+1}),
\end{align}
with the error operator $\vec{E}_{\fq}$ being defined in \eqref{Eq}. One can check that the new error generated at $(N+1)$-th step is
\begin{align}\label{C1.12}
	\vec{\CE}_{N+1}&\eqdef \CL\left(\sum_{j=0}^{N+1}\vec{\Xi}_j\right)-(0,f_u,f_v)\nonumber\\
	&=\vec{E}_{\fs}(\CP_{N+1},\CU_{N+1},\CV_{N+1})=(0,\CV_{N+1}U_s',0).
\end{align}
Thus, the approximate solutions up to any order have been constructed. Finally, if  $\vec{\Xi}=\sum_{j=0}^\infty \vec{\Xi}_{j}$ converges, then this series defines a solution to the resolvent problem \eqref{3.1}.

Next, if both $f_u$ and $f_v$ have one order regularity, that is: $(f_u,f_v)\in H^1(-1,1)$, then at the zeroth step we can introduce $(\varrho_0,\cu_0,\cv_0)$ which solves  quasi-compressible system
\begin{align}\label{C1.13}
	\fq(\varrho_0,\cu_0,\cv_0)=(0,f_u,f_v).
\end{align}
The error generated by $(\varrho_0,\cu_0,\cv_0)$ is given by
\begin{align}\label{C1.14}
	\vec{\CE}_{-1}&\eqdef \vec{E}_{\fq}(\varrho_0,\cu_0,\cv_0)\nonumber\\
	&=\left( 0,\eps\Delta_k\left[(U_s-c)\varrho_0\right]-\eps \lambda ik\dv(\cu_{0},\cv_{0})+\eps U_s''\varrho_{0},-\eps\lambda\pa_y\dv(\cu_{0},\cv_{0}) \right).
\end{align}
Since $\vec{\CE}_{-1}\in L^2(-1,1)$, we can repeat the procedure \eqref{C1.1}-\eqref{C1.12} to construct a solution $\vec{\Upsilon}=(\tilde{\rho},\tilde{u},\tilde{v})$ to the resolvent problem $\CL(\vec{\Upsilon})=-\vec{\CE}_{-1}$. Finally, the solution to the original resolvent problem \eqref{3.1} is given by $\vec{\Xi}\eqdef \vec{\Upsilon}+(\varrho_0,\cu_0,\cv_0)$.

We  are  now in the position to prove  the convergence of iteration and solvability of the resolvent problem.

{\bf Proof of Proposition \ref{prop1}}: Recall \eqref{C1.9} the definition of $(N+1)$-step corrector.  First we estimate $\vec{\Xi}_{N+1}=(\varrho_{N+1},\cu_{N+1},\cv_{N+1})$ which solves the  quasi-compressible system \eqref{C1.10} with source term $s_1=\CV_NU_s'$ and $s_2=0$. To use Corollary \ref{cor1}, we compute
\begin{align}
	\Omega(s_1,s_2)=\frac{1}{ik}\pa_y\left(A^{-1}\CV_NU_s'\right)=\frac{1}{ik}\left(\pa_y(A^{-1}U_s')\CV_N+A^{-1}U_s'\pa_y\CV_N\right).\nonumber
\end{align}
By taking $L^2$-norm of $\Omega(s_1,s_2)$, using the elementary inequality \eqref{w1} and the fact $\pa_y\CV_N=\dv(\CU_{N},\CV_{N})-ik\CU_N$, we deduce that
\begin{align}
\|\Omega(s_1,s_2)\|_{L^2}&\leq Ck^{-1}\left(  \|\pa_y(A^{-1}U_s')\|_{L^2}\|\CV_{N}\|_{L^\infty}+\|A^{-1}U_s'\|_{L^\infty}\|\pa_y\CV_{N}\|_{L^2}   \right)\nonumber\\
&\leq Ck^{-1}\|\pa_y\CV_{N}\|_{L^2}\leq C\eps^{-\f17}\|\dv(\CU_N,\CV_N)\|_{L^2}+	C\|\CU_N\|_{L^2}.\label{C2.1}
\end{align}
Similarly, it holds that
\begin{align}\label{C2.2}
	\|s_1\|_{L^2}\leq C\|\CV_N\|_{L^\infty}\leq C\left(\|\dv(\CU_N,\CV_N)\|_{L^2}+\eps^{\f17}\|\CU_N\|_{L^2}\right),
\end{align}
and
\begin{align}\label{C2.3}
	\|\dv(s_1,s_2)\|_{L^2}\leq Ck\|\CV_N\|_{L^\infty}\leq  C\eps^{\f17}\left(\|\dv(\CU_N,\CV_N)\|_{L^2}+\eps^{\f17}\|\CU_N\|_{L^2}\right).
\end{align}
Then by applying Corollary \ref{cor1} to $(\varrho_{N+1},\cu_{N+1},\cv_{N+1})$ and using
the  bounds \eqref{C2.1} to \eqref{C2.3}, we obtain
\begin{align}\label{C2.4}
	&\|\cu_{N+1}\|_{H^1}+\|(m^{-2}\varrho_{N+1},\cv_{N+1})\|_{H^2}+\eps^{-\f17}\|\dv(\cu_{N+1},\cv_{N+1})\|_{H^1}\nonumber\\
	&\quad\leq C\eps^{-\f27} \left(\eps^{-\f17}\|\dv(\CU_N,\CV_N)\|_{L^2}+\|\CU_N\|_{L^2}\right),
\end{align}
and 
\begin{align}\label{C2.5}
	\|\pa_y^2\cu_{N+1}\|_{L^2}\leq C\eps^{-\f{4}{7}}\left(\eps^{-\f17}\|\dv(\CU_N,\CV_N)\|_{L^2}+\|\CU_N\|_{L^2}\right).
\end{align}

Next we estimate $(\CP_{N+1},\CU_{N+1},\CV_{N+1})$  which solves the Stokes system \eqref{C1.11}. Recall the error operator $\vec{E}_{\fq}$ defined in \eqref{Eq}. Then by using the estimate \eqref{C2.4} we obtain
\begin{align}\label{C2.6}
\|\vec{E}_{\fq}(\varrho_{N+1},\cu_{N+1},\cv_{N+1})\|_{L^2}\leq  &C\eps \bigg(\|\varrho_{N+1}\|_{H^2}+\|\dv(\cu_{N+1},\cv_{N+1})\|_{H^1}\bigg)\nonumber\\
\leq&C\eps \bigg(\|m^{-2}\varrho_{N+1}\|_{H^2}+k^{-1}\|\dv(\cu_{N+1},\cv_{N+1})\|_{H^1}\bigg)\nonumber\\
\leq &C \eps^{\f57}\left(\eps^{-\f17}\|\dv(\CU_N,\CV_N)\|_{L^2}+\|\CU_N\|_{L^2}\right).
\end{align}
By applying Lemma \ref{prop3} to $(\CP_{N+1},\CU_{N+1},\CV_{N+1})$ and using the bound \eqref{C2.6}, we deduce 
\begin{align}
		\|(m^{-1}\CP_{N+1},\CU_{N+1},\CV_{N+1})\|_{L^2}&\leq C \eps^{-\f37}\|\vec{E}_{\fq}(\varrho_{N+1},\cu_{N+1},\cv_{N+1})\|_{L^2}\nonumber\\
		&\leq C\eps^{\f27}\left(\eps^{-\f17}\|\dv(\CU_N,\CV_N)\|_{L^2}+\|\CU_N\|_{L^2}\right),\label{C2.7}\\
	\eps^{-\f17}\left(\|\dv(\CU_{N+1},\CV_{N+1})\|_{H^1}+m^{-2}\|\partial_y\CP_{N+1}\|_{L^2}\right)&\leq C\eps^{-\f37}\|\vec{E}_{\fq}(\varrho_{N+1},\cu_{N+1},\cv_{N+1})\|_{L^2}\nonumber\\
		&\leq C\eps^{\f27}\left(\eps^{-\f17}\|\dv(\CU_N,\CV_N)\|_{L^2}+\|\CU_N\|_{L^2}\right),\label{C2.8}\\
	\|\partial_y \CU_{N+1}, k\CU_{N+1}\|_{L^2}+\|\partial_y\CV_{N+1},k\CV_{N+1}\|_{L^2}&\leq C\eps^{-\f57}\|\vec{E}_{\fq}(\varrho_{N+1},\cu_{N+1},\cv_{N+1})\|_{L^2}\nonumber\\
	&\leq C\left(\eps^{-\f17}\|\dv(\CU_N,\CV_N)\|_{L^2}+\|\CU_N\|_{L^2}\right),\label{C2.9}\\
	\|(\Delta_k\CU_{N+1},\Delta_k\CV_{N+1})\|_{L^2}&\leq C\eps^{-\f97}\|\vec{E}_{\fq}(\varrho_{N+1},\cu_{N+1},\cv_{N+1})\|_{L^2}\nonumber\\
	&\leq C\eps^{-\f47}\left(\eps^{-\f17}\|\dv(\CU_N,\CV_N)\|_{L^2}+\|\CU_N\|_{L^2}\right).\label{C2.10}
\end{align}

Now for $N=0,1,2,\cdots$, we define
\begin{align}\label{C2.11}
	E_{N}\eqdef~ \|(m^{-1}\CP_{N},\CU_{N},\CV_{N})\|_{L^2}+\eps^{-\f17}\left(\|\dv(\CU_{N},\CV_{N})\|_{H^1}+m^{-2}\|\partial_y\CP_{N}\|_{L^2}\right).
\end{align}
From \eqref{C2.7} and \eqref{C2.8},  it holds that $E_{N+1}\leq C\eps^{\f27}E_{N}$. Then by  taking $\eps\ll 1$ suffciently small, we have 
\begin{align}\label{C2.12}
\sum_{j=0}^\infty E_j\leq \sum_{j=0}^\infty \left( \f12\right)^j E_0\leq CE_0.
\end{align}
For other components, 
by using the estimates \eqref{C2.4}, \eqref{C2.5}, \eqref{C2.9} and \eqref{C2.10}, we obtain that
\begin{align}
	&\sum_{j=1}^\infty\|\cu_{j}\|_{H^1}+\|(m^{-2}\varrho_{j},\cv_{j})\|_{H^2}+\eps^{-\f17}\|\dv(\cu_{j},\cv_{j})\|_{H^1}\leq C\eps^{-\f27}\left(\sum_{j=0}^\infty E_j\right)\leq C\eps^{-\f27} E_0,\label{C2.13}\\
	&\sum_{j=1}^\infty\|\pa_y^2\cu_{j}\|_{L^2}\leq C\eps^{-\f47}\left(\sum_{j=0}^\infty E_j\right)\leq C\eps^{-\f47}E_0,\label{C2.14}\\
	&\sum_{j=1}^\infty \|(\pa_y\CU_{j},\pa_y\CV_{j})\|_{L^2}\leq C\left(\sum_{j=0}^\infty E_j\right)\leq CE_0,\label{C2.15}\\
	&\sum_{j=1}^\infty \|(\pa_y^2\CU_{j},\pa_y^2\CV_{j})\|_{L^2}\leq C\eps^{-\f47}\left(\sum_{j=0}^\infty E_j\right)\leq C\eps^{-\f47}E_0.\label{C2.16}
\end{align}
Next we estimate
 $\vec{\Xi}_0=(\CP_0,\CU_0,\CV_0)$. Since $\vec{\Xi}_0$ solves the Stokes system \eqref{C1.0}, we can apply Lemma \ref{prop3} to $\vec{\Xi}_0$ with $q_0=0$, $q_1=f_u$, and $q_2=f_v$. Thus, it holds that
\begin{align}
	E_0&\leq C\eps^{-\f37}\|(f_u,f_v)\|_{L^2},\label{C2.17}\\
	\|\partial_y \CU_0, k\CU_0\|_{L^2}+\|\partial_y\CV_0,k\CV_0\|_{L^2}&\leq C\eps^{-\f57}\|(f_u,f_v)\|_{L^2},\label{C2.18}\\
	\|(\pa_y^2\CU_0,\pa_y^2\CV_0)\|_{L^2}&\leq C\eps^{-\f97}\|(f_u,f_v)\|_{L^2}.\label{C2.19}
\end{align}
Combining estimates \eqref{C2.12}-\eqref{C2.19} together gives
\begin{align}
	\|(m^{-1}\rho,u,v)\|_{L^2}&\leq \sum_{j=0}^\infty \|(m^{-1}\CP_j,\CU_j,\CV_j)\|_{L^2}+\sum_{j=1}^\infty\|(m^{-1}\varrho_j,\cu_j,\cv_j)\|_{L^2}\nonumber\\
	&\leq C\left(1+\eps^{-\f27}
	\right)E_0\leq C\eps^{-\f57}\|(f_u,f_v)\|_{L^2},\label{C2.20}\\
	\|m^{-2}\pa_y\rho\|_{L^2}+\|\dv(u,v)\|_{L^2}&\leq \eps^{\f17}\left(\sum_{j=0}^\infty E_j\right)+\sum_{j=1}^\infty\left(\|m^{-2}\pa_y\varrho_j\|_{L^2}+\|\dv(\cu_j,\cv_j)\|_{L^2}\right)\nonumber\\
	&\leq C(1+\eps^{-\f27})E_0\leq C\eps^{-\f57}\|(f_u,f_v)\|_{L^2},\label{C2.21}\\
	\|(\pa_yu,\pa_yv)\|_{L^2}&\leq \sum_{j=0}^\infty\|(\pa_y\CU_j,\pa_y\CV_j)\|_{L^2}+\sum_{j=1}^\infty\|(\pa_y\cu_j,\pa_y\cv_j)\|_{L^2}\nonumber\\
	&\leq C\left(1+\eps^{-\f27}\right)E_0+C\eps^{-\f57}\|(f_u,f_v)\|_{L^2}\leq  C\eps^{-\f57}\|(f_u,f_v)\|_{L^2},\label{C2.22}\\
	\|(\pa_y^2u,\pa_y^2v)\|_{L^2}&\leq \sum_{j=0}^\infty\|(\pa_y^2\CU_j,\pa_y^2\CV_j)\|_{L^2}+\sum_{j=1}^\infty\|(\pa_y^2\cu_j,\pa_y^2\cv_j)\|_{L^2}\nonumber\\
	&\leq C\left(1+\eps^{-\f47}\right)E_0+C\eps^{-\f97}\|(f_u,f_v)\|_{L^2}\leq C\eps^{-\f97}\|(f_u,f_v)\|_{L^2}.\label{C2.23}
\end{align}
By putting \eqref{C2.20}-\eqref{C2.23} together, the inequality \eqref{3.0.1} is obtained.

Finally, we prove the improved estimate \eqref{3.0.2}. Suppose that $f_u$ and $f_v\in H^1(-1,1)$. As mentioned before, we look for a solution in the form  of 
$$
(\rho,u,v)=(\varrho_0,\cu_0,\cv_0)+(\tilde{\rho},\tilde{u},\tilde{v}),
$$
where $(\varrho_0,\cu_0,\cv_0)$ solves the quasi-compressible system \eqref{C1.13} that generates an error  $\vec{\CE}_{-1}$ defined in \eqref{C1.14}, and $(\tilde{\rho},\tilde{u},\tilde{v})$ solves the resolvent problem $\CL(\tilde{\rho},\tilde{u},\tilde{v})=-\vec{\CE}_{-1}$. Now we estimate $(\varrho_0,\cu_0,\cv_0)$. Use Corollary \ref{cor1} to  have
\begin{align}\label{C2.24}
	\|\cu_0\|_{H^1}&+\|(m^{-2}\varrho_0,\cv_0)\|_{H^2}+\eps^{-\f17}\|\dv(\cu_0,\cv_0)\|_{H^1}\nonumber\\
	&\leq C\eps^{-\f27}\|\Omega(f_u,f_v)\|_{L^2}+C\eps^{-\f17}\|(f_u,f_v)\|_{L^2}+C\|\dv(f_u,f_v)\|_{L^2},
\end{align}
and
\begin{align}\label{C2.25}
	\|\partial^2_y\cu_0\|_{L^2}\leq C\eps^{-\f47}\|\Omega(f_u,f_v)\|_{L^2}+\eps^{-\f17}\|(f_u,f_v)\|_{L^2}+C\|\dv(f_u,f_v)\|_{L^2}.
\end{align}
Moreover, to bound $(\tilde{\rho},\tilde{u},\tilde{v})$, we first observe that
\begin{align}
	\|\vec{\CE}_{-1}\|_{L^2}&\leq \|\vec{E}_{\fq}(\varrho_0,\cu_0,\cv_0)\|_{L^2}\leq C\eps\|\varrho_0\|_{H^2}+C\eps\|\dv(\cu_0,\cv_0)\|_{H^1}\nonumber\\
	&\leq C\eps\left(\eps^{-\f27}\|\Omega(f_u,f_v)\|_{L^2}+C\eps^{-\f17}\|(f_u,f_v)\|_{L^2}+C\|\dv(f_u,f_v)\|_{L^2}\right),\label{C2.25-1}
\end{align}
where we have used \eqref{C2.24} in the last inequality. Then by applying \eqref{3.0.1} to $(\tilde{\rho},\tilde{u},\tilde{v})$,
and using the bound \eqref{C2.25-1} on source $\vec{\CE}_{-1}$, we obtain that
\begin{align}\label{C2.26}
	&\|(m^{-1}\tilde{\rho},\tilde{u},\tilde{v})\|_{L^2}+\|(m^{-2}\pa_y\tilde{\rho},\pa_y\tilde{u},\pa_y\tilde{v})\|_{L^2}+\eps^{\f47}\|(\pa_y^2\tilde{\cu},\pa_y^2\tilde{\cv})\|_{L^2}\nonumber\\
	&\qquad\leq C\eps^{-\f57}\|\vec{\CE}_{-1}\|_{L^2}\leq  C\|\Omega(f_u,f_v)\|_{L^2}+C\eps^{\f17}\|(f_u,f_v)\|_{L^2}+C\eps^{\f27}\|\dv(f_u,f_v)\|_{L^2}.
\end{align}
Putting \eqref{C2.24}, \eqref{C2.25} and \eqref{C2.26} together gives
\begin{align}
	&\|(m^{-1}\rho,u,v)\|_{L^2}+\|(m^{-2}\pa_y\rho,\pa_yu,\pa_yv)\|_{L^2}\nonumber\\
	&\qquad\leq C\eps^{-\f27}\|\Omega(f_u,f_v)\|_{L^2}+C\eps^{-\f17}\|(f_u,f_v)\|_{L^2}+C\|\dv(f_u,f_v)\|_{L^2},\label{C2.28}
\end{align}
and
\begin{align}
	\|(\pa_y^2u,\pa_y^2v)\|_{L^2}\leq C\eps^{-\f47}\|\Omega(f_u,f_v)\|_{L^2}+C\eps^{-\f37}\|(f_u,f_v)\|_{L^2}+C\eps^{-\f27}\|\dv(f_u,f_v)\|_{L^2}.\label{C2.29}
\end{align}
Combining \eqref{C2.28} and \eqref{C2.29} together implies \eqref{3.0.2}. The analyticity can be proved  as in \cite{YZ}. We omit the details  for brevity. Therefore, the proof of  the proposition is completed.
\qed

\section{Dispersion relation}
Recall in Section 2.1 and 2.2 that we have four independent approximate solutions:
$$\vec{\Xi}^s_{\pm,\text{app}}\text{ and } \vec{\Xi}^f_{\pm,\text{app}}.$$
Here  $\vec{\Xi}^s_{\pm,\text{app}}$ are inviscid modes which are determined in terms of
solutions  $\varphi^s_\pm$ to the Lees-Lin equation \eqref{2.1.5}, and $\vec{\Xi}^f_{\pm,\text{app}}$ are viscous modes which are boundary layers at $y=\pm 1$.  Using Proposition \ref{prop1}, we can construct four exact independent solutions to \eqref{1.4} near $\vec{\Xi}^s_{\pm,\text{app}}$  and  $\vec{\Xi}^f_{\pm,\text{app}}.$ 
\begin{proposition}\label{prop4.1}
The eigenvalue problem \eqref{1.4} admits four solutions, that is,  $\vec{\Xi}^s_{\pm}=(\rho_\pm^s,u_\pm^s,v_\pm^s)$ and $\vec{\Xi}^f_{\pm}=(\rho_\pm^f,u_\pm^f,v_\pm^f)$. Moreover, their boundary values satisfy the following asymptotic properties:
	\begin{align}
	v_{+}^s(-1)&=ik\left(c-\tau{k^2}\right)+O(1)\eps^{\f57}|\log\eps|, \text{ with } \tau=\f1{U_s'(-1)}\int_{-1}^1U_s^2(x)\dd x,\label{4.0}\\
	v_{+}^s(1)&=ikc+O(1)\eps^{\f57}, \label{4.1}\\
	v_{-}^s(\pm1)&=\frac{ik}{U_s'(\pm1)}+O(1)\eps^{\f37}|\log\eps|,\label{4.2}\\
	u_{+}^{s}(\pm 1)&=U_s'(\pm 1)+O(1)\eps^{\f17},\label{4.3}\\
	u_{-}^s(\pm 1)&=O(1)|\log \eps|,\label{4.4}\\
	u_{+}^f( 1)&=\frac{\text{Ai}(1,\tilde{z}_0)}{\text{Ai}(2,\tilde{z}_0)}+O(1)\eps^{\f17},~ v_+^f( 1)=ik\tilde{\delta},~ v_\pm ^f(-1)=O(1)\eps^{\infty},~ u_\pm^f(-1 )=O(1)\eps^{\f17} \label{4.5},\\
	u_{-}^f(- 1)&=-\frac{\text{Ai}(1,z_0)}{\text{Ai}(2,z_0)}+O(1)\eps^{\f17},~ v_-^f(- 1)=ik\delta,~ v_- ^f(1)=O(1)\eps^{\infty},~ u_-^f(1 )=O(1)\eps^{\f17} \label{4.6}.
\end{align}
Here $z_0$ and $\tilde{z}_0$ are defined in \eqref{2.3.3} and \eqref{2.3.7-1} respectively.
\end{proposition}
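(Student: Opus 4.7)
The plan is to construct each exact mode as a correction of its approximate counterpart: set $\vec{\Xi}^{s,f}_{\pm} := \vec{\Xi}^{s,f}_{\pm,\mathrm{app}} + \vec{R}^{s,f}_{\pm}$, where $\vec{R}^{s,f}_{\pm}$ is the solution of the inhomogeneous resolvent problem \eqref{3.1} with source $-\vec{E}^{s,f}_{\pm}$ and boundary data $v^{R}|_{y=\pm 1}=0$ provided by Proposition \ref{prop1}. Since $\vec{E}^{s,f}_{\pm}\in L^{2}(-1,1)$ by Proposition \ref{lem2.4}, this defines genuine eigenfunctions $\CL(\vec{\Xi}^{s,f}_{\pm})=0$, and the problem reduces to extracting the boundary values of $\vec{\Xi}^{s,f}_{\pm}$ at $y=\pm 1$.

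The $v$-part is immediate: because $v^{R}|_{y=\pm 1}=0$ by construction, the $v$-boundary values of $\vec{\Xi}^{s,f}_{\pm}$ coincide with those of $\vec{\Xi}^{s,f}_{\pm,\mathrm{app}}$ and are therefore given by Lemma \ref{lem2.2} for the slow modes and by \eqref{2.3.10}--\eqref{2.3.11} for the boundary-layer modes, once one substitutes $k\sim\eps^{1/7}$, $|c|\sim\eps^{2/7}$ and $|\log\text{Im}\,c|\sim|\log\eps|$. For the $u$-component I apply the one-dimensional Sobolev trace $|f(\pm 1)|\leq C\|f\|_{H^{1}(-1,1)}$ to $u^{R}$ and read off $\|u^{R}\|_{H^{1}}$ from Proposition \ref{prop1}. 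The fast modes carry error $\|\vec{E}^{f}_{\pm}\|_{L^{2}}\lesssim\eps^{6/7}$, and the basic bound \eqref{3.0.1} gives $\|u^{f,R}_{\pm}\|_{H^{1}}\lesssim\eps^{1/7}$; the slow mode $\vec{\Xi}^{s}_{+}$ is handled through the decomposition $\vec{E}^{s}_{+}=\vec{E}^{s}_{+,1}+\vec{E}^{s}_{+,2}$ of Proposition \ref{lem2.4}, using \eqref{3.0.1} on the $L^{2}$ piece and the improved estimate \eqref{3.0.2} on the $H^{1}$-regular piece whose Orr--Sommerfeld source $\Omega(0,-k^{4}(U_{s}-c)\varphi_{+,k})$ is of order $k^{4}\sim\eps^{4/7}$. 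Combined with Lemma \ref{lem2.2} this yields \eqref{4.0}, \eqref{4.1}, \eqref{4.3}, along with the fast-mode expansions \eqref{4.5}, \eqref{4.6}.

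The real difficulty is the slow mode $\vec{\Xi}^{s}_{-}$: the energy bound $\|\vec{E}^{s}_{-}\|_{L^{2}}\lesssim\eps^{4/7}$ from \eqref{2.4.3}, substituted into the basic estimate \eqref{3.0.1}, produces only $\|\vec{R}^{s}_{-}\|_{H^{1}}=O(\eps^{-1/7})$, which is catastrophically larger than the $O(|\log\eps|)$ size of $u^{s}_{-,\mathrm{app}}(\pm 1)$ to which it must be compared. The improved estimate \eqref{3.0.2} is therefore indispensable here, and using it requires exhibiting a structural cancellation in the Orr--Sommerfeld combination $\Omega(\vec{E}^{s}_{-})=-(\vec{E}^{s}_{-})_{3}+(ik)^{-1}\pa_{y}(A^{-1}(\vec{E}^{s}_{-})_{2})$: the highest-derivative viscous contributions to $(\vec{E}^{s}_{-})_{2}$ and $(\vec{E}^{s}_{-})_{3}$ individually of order $\eps^{2/7}$ must combine into the much smaller Lees--Lin residual of $\varphi^{s}_{-}$ governed by \eqref{2.A.2}. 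Plugging the representations \eqref{2.2.14}--\eqref{2.2.16} and the refined derivative bounds \eqref{2.2.30} of Lemma \ref{lem2.3} into $\Omega$ should therefore yield $\|\Omega(\vec{E}^{s}_{-})\|_{L^{2}}\lesssim\eps^{4/7}$; inserted into \eqref{3.0.2} this gives $\|\vec{R}^{s}_{-}\|_{H^{1}}\lesssim\eps^{2/7}$, hence $|u^{s,R}_{-}(\pm 1)|\lesssim\eps^{2/7}$, from which \eqref{4.2} and \eqref{4.4} follow. Finally, the analyticity of $u^{s,f}_{\pm}(\pm 1;c)$ in $c$ is inherited from Proposition \ref{prop1} together with the explicit analytic dependence of every approximate ingredient on $c$.
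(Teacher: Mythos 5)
Your construction mirrors the paper's proof exactly for $\vec{\Xi}^s_+$, $\vec{\Xi}^f_\pm$ (additive remainder solving \eqref{3.1}, $v$-trace preserved because the remainder has homogeneous $v$-boundary data, $u$-trace controlled by the Sobolev trace through $\|u^R\|_{H^1}$, decomposition $\vec{E}^s_+=\vec{E}^s_{+,1}+\vec{E}^s_{+,2}$ to pair \eqref{3.0.1} with \eqref{3.0.2}), and those cases all close cleanly.

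The gap is in your treatment of $\vec{\Xi}^s_-$, specifically the claim that $\|\Omega(\vec{E}^s_-)\|_{L^2}\lesssim\eps^{4/7}$ by a cancellation down to the Lees--Lin residual. That is not the cancellation that actually occurs. Writing $\vec{E}^s_-=\fq(\vec{\Xi}^s_{-,\text{app}})+\vec{E}_\fq(\vec{\Xi}^s_{-,\text{app}})$ and using the identity $\Omega\bigl(\fq(\vec{\Xi}^s_{-,\text{app}})\bigr)=\text{OS}_\text{CNS}(\varphi^s_-)$, one finds $\Omega(\vec{E}^s_-)=\tfrac{i}{n}\Lambda(\Delta_k\varphi^s_-)+\text{Ray}_\text{CNS}(\varphi^s_-)+\Omega(\vec{E}_\fq)$. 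The Rayleigh part does cancel to $-k^4(U_s-c)\varphi_{-,k}\sim\eps^{4/7}$, but the viscous Orr--Sommerfeld term $\tfrac{i}{n}\Lambda(\Delta_k\varphi^s_-)\sim\tfrac{\eps}{k}\pa_y^4\varphi^s_-$ does \emph{not} cancel, and by \eqref{2.2.30} (namely $\|\pa_y^4\varphi_-^s\|_{L^2}\lesssim\eps^{-5/7}$) it contributes $\eps^{6/7}\cdot\eps^{-5/7}=\eps^{1/7}$, as does $\Omega(\vec{E}_\fq)$. So the best you can extract from \eqref{3.0.2} and \eqref{2.4.3} is $|u^{s,R}_-(\pm1)|\lesssim\eps^{-2/7}\cdot\eps^{1/7}=\eps^{-1/7}$, not $\eps^{2/7}$. (The paper's own display \eqref{4.1.4} is also terse here --- the passage $\|\Omega(\vec{E}^s_-)\|_{L^2}\le C\|\vec{E}^s_-\|_{H^1}$ silently drops the $1/k\sim\eps^{-1/7}$ factor built into $\Omega$ --- but, as you can check from the determinant computation in Lemma \ref{lem4.1} and the formula for $\text{I}(c)$, any bound $u^s_-(\pm1)=o(\eps^{-2/7})$ suffices for the dispersion relation, so $\eps^{-1/7}$ is enough even though it is weaker than the stated $O(|\log\eps|)$ in \eqref{4.4}.) In short: the mechanism you invoke would give $\eps^{4/7}$ only if the viscous OS residual vanished, which it does not; you either have to live with the $\eps^{-1/7}$ bound (and verify, as the paper's downstream argument does, that this is harmless) or identify a genuine further cancellation, which you have not exhibited.

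Minor point: you state the analyticity in $c$ at the end; the paper relegates it to Proposition \ref{prop1} and does not re-derive it here, so that part is fine. You should also note, at least in passing, that the four modes so constructed are linearly independent (this is implicit in the paper via the invertibility of $\CM$ in Lemma \ref{lem4.1}).
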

\begin{proof}
	First we construct $\vec{\Xi}_{+}^s$. We look for the solution in form of
	\begin{align}
	\vec{\Xi}_+^s=\vec{\Xi}_{+,\text{app}}^s+\vec{\Xi}_{+,r}^s,\nonumber
	\end{align}
where the approximate solution $\vec{\Xi}_{+,\text{app}}^s$ is defined in \eqref{2.2.14}-\eqref{2.2.16}. Recall the error function $\vec{E}_+^s$ in \eqref{2.4.1} and decomposition $\vec{E}_+^s=\vec{E}_{+,1}^s+\vec{E}_{+,2}^s$ in \eqref{2.4.4}. Then we  decompose the remainder accordingly into: $\vec{\Xi}_{+,r}^s=\vec{\Xi}_{+,r,1}^s+\vec{\Xi}_{+,r,2}^s$. Here,  $\vec{\Xi}_{+,r,j}^s=(\rho_{+,r,j}^s,u_{+,r,j}^s, v_{+,r,j}^s)$ satisfies 
\begin{align}
	\CL(\vec{\Xi}_{+,r,j}^s)=-\vec{E}_{+,j}^s,~~v_{+,r,j}^s\big|_{y=\pm 1}=0,~j=1,2.\label{4.1.1}
\end{align}
The solvability of $\vec{\Xi}_{+,r,j}^{s}$ is guaranteed by Proposition \ref{prop1}. By using \eqref{2.4.4-1} and \eqref{3.0.1}, we  obtain
\begin{align}
	\left|u_{+,r,1}^s(\pm 1)\right|\leq \|u_{+,r,1}^s\|_{H^1}\leq C\eps^{-\f57}\|\vec{E}_{+,1}^s\|_{L^2}\leq C\eps^{\f17},\label{4.1.2}
\end{align}
and
\begin{align}
	\left|u_{+,r,2}^s(\pm 1)\right|\leq \|u_{+,r,2}^s\|_{H^1}&\leq C\eps^{-\f27}\left(\|\Omega(\vec{E}_{+,2}^s)\|_{L^2}+\eps^{\f17}\|\vec{E}_{+,2}^s\|_{L^2}+\eps^{\f27}\|\dv(\vec{E}_{+,2}^s)\|_{L^2}\right)\nonumber\\
	&\leq C\eps^{-\f27}\|\vec{E}_{+,2}^s\|_{H^1}\leq C\eps^{\f27}.\label{4.2.3}
\end{align}
From \eqref{2.2.17}, \eqref{2.2.18}, \eqref{2.2.21}, \eqref{4.1.1}, \eqref{4.1.2} and \eqref{4.2.3}, it holds that
\begin{align}
	v_+^s(-1 )&=v_{+,\text{app}}^s(-1)=ik(c-\tau k^2)+O(1)\eps^{\f57}|\log \eps|,\nonumber\\
	v_+^s(1 )&=v_{+,\text{app}}^s(1)=ikc+O(1)\eps^{\f57},\nonumber\\
	u_+^s(\pm 1)&=u_{+,\text{app}}^s(\pm 1)+u_{+,r,1}^s(\pm 1)+u_{+,r,2}^s(\pm 1)=U_s'(\pm 1)+O(1)\eps^{\f17}.\nonumber
\end{align}
Thus we have shown the asymptotic properties given in  \eqref{4.0}, \eqref{4.1} and \eqref{4.3} for the  boundary data of $\vec{\Xi}_{+}^s$.

For $\vec{\Xi}_-^s$, we recall the approximate solution $\vec{\Xi}_{-,\text{app}}^s$ defined in \eqref{2.2.14}-\eqref{2.2.16}. This approximation generates an error  $\vec{E}_{-}^s$ which is given in \eqref{2.4.1}. To get rid of the error, it is natural to seek the solution $\vec{\Xi}_-^s$ in the following form
\begin{align}
	\vec{\Xi}_-^s=\vec{\Xi}_{-,\text{app}}^s+\vec{\Xi}_{-,r}^s,\nonumber
\end{align}
where the remainder
$\vec{\Xi}_{-,r}^s=(\rho_{-,r}^s,u_{-,r}^s, v_{-,r}^s)$ satisfies 
\begin{align}
	\CL(\vec{\Xi}_{-,r}^s)=-\vec{E}_{-}^s,~~v_{-}^s\big|_{y=\pm 1}=0.\label{4.1.3}
\end{align}
By using \eqref{3.0.2} and \eqref{2.4.3}, we obtain that
\begin{align}
	\left|u_{-,r}^s(\pm 1)\right|\leq C \|u_{-,r}^s\|_{H^1}&\leq C\eps^{-\f27}\left(\|\Omega(\vec{E}_{-}^s)\|_{L^2}+\eps^{\f17}\|\vec{E}_{-}^s\|_{L^2}+\eps^{\f27}\|\dv(\vec{E}_{-}^s)\|_{L^2}\right)\nonumber\\
	&\leq C\eps^{-\f27}\|\vec{E}_-^s\|_{H^1}\leq C.\label{4.1.4}
\end{align}
Then by \eqref{2.2.19}, \eqref{4.1.3} and \eqref{4.1.4}, we have
\begin{align}
	v_{-}^s(\pm 1)=v_{-,\text{app}}^s(\pm 1)=\frac{ik}{U_s'(\pm 1)}+O(1)\eps^{\f37}|\log \eps|,\nonumber
\end{align}
and 
\begin{align}
	u_-^s(\pm 1)=u_{-,\text{app}}^s(\pm 1)+u_{-,r}^s(\pm 1)=O(1)|\log \eps|.\nonumber
\end{align}
This completes the proof of \eqref{4.2} and \eqref{4.4}.

Finally we construct the viscous modes $\vec{\Xi}_{\pm}^f$ near the boundary layer profiles $\vec{\Xi}_{\pm,\text{app}}^f$ defined in \eqref{2.3.6} and \eqref{2.3.7} respectively. Without loss of generality, we consider $\vec{\Xi}_+^f$. Recall the error function $\vec{E}_{+}^f$ given in  \eqref{2.4.2}. We look for the solution in form of 
\begin{align}
	\vec{\Xi}_{+}^f=\vec{\Xi}_{+,\text{app}}^f+\vec{\Xi}_{+,r}^f,\nonumber
\end{align}
where $\vec{\Xi}_{+,r}^f$  solves  the following resolvent problem:
\begin{align}
	\CL(\vec{\Xi}_{+,r}^f)=-\vec{E}_{+}^f,~~v_{+,r}^f\big|_{y=\pm 1}=0.\label{4.1.5}
\end{align}
Using \eqref{2.4.5} and \eqref{3.0.1}, we can obtain that
\begin{align}
	|u_{+,r}^f(\pm 1)|\leq C\|u_{+,r}^f\|_{H^1}\leq C\eps^{-\f57}\|\vec{E}_{+}^f\|_{L^2}\leq C\eps^{\f17}.\label{4.1.6}
\end{align}
Then from \eqref{2.3.10}, \eqref{4.1.5} and \eqref{4.1.6},  it holds that
\begin{align}
	&v_{+}^f( 1)=v_{+,\text{app}}^f(1)=ik\tilde{\delta},~v_{+}^f( -1)=v_{+,\text{app}}^f(-1)=O(1)\eps^\infty\nonumber,\\
	&u_{+}^f( 1)=u_{+,\text{app}}^f( 1)+u_{+,r}^f( 1)= \frac{\text{Ai}(1,\tilde{z}_0)}{\text{Ai}(2,\tilde{z}_0)}+O(1)\eps^{\f17},\nonumber\\
	&u_{+}^f(- 1)=u_{+,\text{app}}^f(- 1)+u_{+,r}^f(- 1)=O(1)\eps^{\f17}.\nonumber
\end{align}
Combining these estimates together yields \eqref{4.5}. The asymptotic expansion \eqref{4.6} for $\vec{\Xi}_-^f$ can be proved similarly. Therefore, the proof of  the proposition is completed.
\end{proof}

To match exact boundary conditions at $y=\pm1$, we construct a solution $\vec{\Xi}$ to the eigenvalue problem \eqref{1.4} by the following linear combination of $\vec{\Xi}^s_{\pm}$  and  $\vec{\Xi}^f_{\pm}:$
\begin{align}\label{4.7}
\vec{\Xi}=(\rho,u,v)\eqdef\vec{\Xi}^s_++\alpha_1(c)\vec{\Xi}^s_-+\alpha_2(c)\vec{\Xi}^f_-+\alpha_3(c)\vec{\Xi}^f_+,
\end{align}
where $\alpha_1,\alpha_2,\alpha_3$ are constants. Now we choose $\alpha_1$, $\alpha_2$ and $\alpha_3$ so that the following three boundary conditions are satisfied:
\begin{align}
	v(-1)&=v^s_+(-1)+\alpha_1v^s_-(-1)+\alpha_2v^f_-(-1)+\alpha_3v^f_+(-1)=0,\label{4.8}\\
	u(-1)&=u^s_+(-1)+\alpha_1u^s_-(-1)+\alpha_2u^f_-(-1)+\alpha_3u^f_+(-1)=0,\label{4.9}\\
	v(1)&=v^s_+(1)+\alpha_1v^s_-(1)+\alpha_2v^f_-(1)+\alpha_3v^f_+(1)=0.\label{4.10}
\end{align}
This reduces to solve the following algebraic system:
$$\mathcal{M}(\alpha_1,\alpha_2,\alpha_3)^T=-\bigg(v^s_+(-1),u^s_+(-1),v^s_+(1)\bigg).$$
where the coefficient matrix $\mathcal{M}$ is given by:
\begin{equation*}
	\mathcal{M}=\left(
	\begin{aligned}
		v^s_-(-1)\quad&&v^f_-(-1)\quad&&v^f_+(-1)\\
		u^s_-(-1)\quad&&u^f_-(-1)\quad&&u^f_+(-1)\\
		v^s_-(1)\quad&&v^f_-(1)\quad&&v^f_+(1)
	\end{aligned}
	\right).
\end{equation*}
The following lemma gives the invertibility of $\mathcal{M}$ and asymptotic formula of coefficients $\a_1$, $\a_2$ and $\a_3$ for $\eps\ll 1$. Denote $\beta=\frac{U_s'(-1)}{\left|U_s'(1)\right|}$.
\begin{lemma}\label{lem4.1}
	For any $\eps\ll1$, the matrix $\mathcal{M}$ is invertible. Moreover, the coefficients $\a_1$, $\a_2$ and $\a_3$ in \eqref{4.8}-\eqref{4.10} have the following asymptotic expansions: \begin{align}
		\alpha_1&=-\delta\left[U_s'(-1)\right]^2\frac{\text{Ai}(2,z_0)}{\text{Ai}(1,z_0)}-U_s'(-1)(c-\tau k^2)+O(1)\eps^{\f37},\label{4.17}\\
		\a_2&=U_s'(-1)\frac{\text{Ai}(2,z_0)}{\text{Ai}(1,z_0)}+O(1)\eps^{\f17},\label{4.18}\\
		\alpha_3&=-\delta^{-1}\left[(1+\beta)\beta^{-\f13} c-\tau \beta^{\f23} k^2\right]-U_s'(-1)\beta^{\f23}\frac{\text{Ai}(2,z_0)}{\text{Ai}(1,z_0)}+O(1)\eps^{\f17}.\label{4.19}
	\end{align}
	
\end{lemma}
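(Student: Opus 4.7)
The plan is to apply Cramer's rule to the $3\times 3$ system $\mathcal{M}(\alpha_1,\alpha_2,\alpha_3)^T=-(v^s_+(-1),u^s_+(-1),v^s_+(1))^T$, using the boundary data supplied by Proposition \ref{prop4.1}. The structural observation is that $\mathcal{M}$ is ``almost block-diagonal'' in the viscous variables: the entries $v^f_+(-1)$ and $v^f_-(1)$ are $O(\eps^{\infty})$, because each boundary layer decays exponentially away from its anchoring wall. Together with the scalings $k\approx\eps^{1/7}$, $\delta,\tilde\delta\approx\eps^{2/7}$, $|c|\approx\eps^{2/7}$ from \eqref{c}, \eqref{2.3.3-1} and \eqref{d1}, this singles out a unique dominant contribution in every $3\times 3$ determinant one needs.

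I would first compute $\det\mathcal{M}$ by cofactor expansion along the first row. The entry sizes are
\begin{align*}
\mathcal{M}\sim\begin{pmatrix}\eps^{1/7} & \eps^{3/7} & \eps^{\infty}\\ |\log\eps| & 1 & \eps^{1/7}\\ \eps^{1/7} & \eps^{\infty} & \eps^{3/7}\end{pmatrix},
\end{align*}
so the single dominant product is $v^s_-(-1)\,u^f_-(-1)\,v^f_+(1)\approx \frac{k^2\tilde\delta}{U_s'(-1)}\frac{\text{Ai}(1,z_0)}{\text{Ai}(2,z_0)}$, of size $\eps^{4/7}$. The remaining five triple products in the expansion are either $O(\eps^{\infty})$ (those hitting a cross-boundary fast-mode trace) or of size $\eps^{5/7}$ and $\eps^{6/7}|\log\eps|$, hence strictly subleading. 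Invertibility of $\mathcal{M}$ then reduces to the nondegeneracy of $\text{Ai}(1,z_0)/\text{Ai}(2,z_0)$ on the range of $c$ in \eqref{c}, a property that will also be used in Section~4 when solving the dispersion relation.

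For the numerators, the right-hand side has leading values $(-ik(c-\tau k^2),\,-U_s'(-1),\,-ikc)$. Replacing column one of $\mathcal{M}$ and retaining only the dominant products (using $(ik)^2=-k^2$) gives
\begin{align*}
\det\mathcal{M}_1\approx -k^2\tilde\delta\Bigl[(c-\tau k^2)\frac{\text{Ai}(1,z_0)}{\text{Ai}(2,z_0)}+U_s'(-1)\delta\Bigr],
\end{align*}
and dividing by $\det\mathcal{M}$ reproduces \eqref{4.17}. Replacing column two, the dominant product $ik\tilde\delta\cdot(ik/U_s'(-1))\cdot(-U_s'(-1))=k^2\tilde\delta$ yields $\alpha_2=U_s'(-1)\text{Ai}(2,z_0)/\text{Ai}(1,z_0)$ to leading order, which is \eqref{4.18}. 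For column three, after rewriting $1/U_s'(1)=-\beta/U_s'(-1)$ and $\tilde\delta=\beta^{1/3}\delta$ with $\beta=U_s'(-1)/|U_s'(1)|$, the two surviving terms of $\det\mathcal{M}_3$ combine into $-\delta^{-1}\bigl[(1+\beta)\beta^{-1/3}c-\beta^{2/3}\tau k^2\bigr]-U_s'(-1)\beta^{2/3}\text{Ai}(2,z_0)/\text{Ai}(1,z_0)$, giving \eqref{4.19}.

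The main obstacle is not the leading-order algebra, which is essentially cofactor expansion plus the Airy identity $\tilde\delta/\delta=\beta^{1/3}$, but the careful bookkeeping of the subleading terms: the $|\log\eps|$ factors from $u^s_-(\pm1)$ and the error in $v^s_-(\pm1)$, together with the $O(\eps^{1/7})$ corrections from $u^f_\pm(\mp1)$ and from the remainders in $u^f_\pm(\pm1)$ and $u^s_+(\pm1)$. After division by $\det\mathcal{M}\approx\eps^{4/7}$ each such contribution must be checked to sit below the stated remainders $O(\eps^{3/7})$ in \eqref{4.17} and $O(\eps^{1/7})$ in \eqref{4.18}--\eqref{4.19}; this works because the ``second largest'' products are damped either by the exponential smallness of cross-boundary fast-mode traces or by an extra $\eps^{1/7}$ factor coming from $u^f_\pm(\mp1)$, but the verification is the bulk of the technical work.
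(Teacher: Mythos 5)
Your proposal is correct and takes essentially the same approach as the paper: the inputs (boundary traces from Proposition~\ref{prop4.1}), the key structural observation (the cross-boundary fast-mode traces $v^f_\pm(\mp1)=O(\eps^\infty)$), and the dominant contribution $k^2\tilde\delta\,[U_s'(-1)]^{-1}\text{Ai}(1,z_0)/\text{Ai}(2,z_0)$ to $\det\mathcal{M}$ all coincide with the paper's. The only cosmetic differences are that you expand $\det\mathcal{M}$ along the first row whereas the paper uses the third column, and you extract $(\alpha_1,\alpha_2,\alpha_3)$ via Cramer's rule rather than the paper's direct substitution of the asymptotic boundary data into the linear system \eqref{4.11}--\eqref{4.13} followed by leading-order matching.
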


\begin{proof}
	By using asymptotic expansions of boundary values in Proposition \ref{prop4.1}, we have
	\begin{equation*}
		\mathcal{M}=\left(
		\begin{aligned}
			\frac{ik}{U_s'(-1)}+O(1)\eps^{\f37}|\log\eps|\quad&&ik\delta\quad&&O(1)\eps^{\infty}\\
			O(1)|\log\eps|\quad&&-\frac{\text{Ai}(1,z_0)}{\text{Ai}(2,z_0)}+O(1)\eps^{\f17}\quad&&O(1)\eps^{\f17}\\
			\frac{ik}{U_s'(1)}+O(1)\eps^{\f37}|\log\eps|\quad&&O(1)\eps^{\infty}\quad&&ik\tilde{\delta}
		\end{aligned}
		\right).
	\end{equation*}
Then we  compute the determinant of $\CM$ as follows.
\begin{align}
	\det \mathcal{M}
	=&ik\tilde{\delta}\cdot \det
	\left(
	\begin{aligned}
		\frac{ik}{U_s'(-1)}+O(1)\eps^{\f37}|\log\eps|\quad&&ik\delta\\
		O(1)|\log\eps|\quad&&-\frac{\text{Ai}(1,z_0)}{\text{Ai}(2,z_0)}+O(1)\eps^{\f17}
	\end{aligned}
	\right)\nonumber\\
	&+O(1)\eps^{\f17}\cdot \det
	\left(
	\begin{aligned}
		\frac{ik}{U_s'(-1)}+O(1)\eps^{\f37}|\log\eps|\quad&&ik\delta\\
		\frac{ik}{U_s'(1)}+O(1)\eps^{\f37}|\log\eps|\quad&&O(1)\eps^{\infty}
	\end{aligned}
	\right)+O(1)\eps^{\infty}\nonumber\\
	=&\frac{k^2\tilde{\delta}}{U_s'(-1)}\left(\frac{\text{Ai}(1,z_0)}{\text{Ai}(2,z_0)}-O(1)\eps^{\f17}|\log\eps|\right)\sim k^2\tilde{\delta}\neq0.\nonumber
\end{align}
Therefore, the matrix $\mathcal{M}$ is invertible and we can solve
	$$(\alpha_1,\alpha_2,\alpha_3)=-\mathcal{M}^{-1}\bigg(v^s_+(-1),u^s_+(-1),v^s_+(1)\bigg).$$
To study the asymptotic properties of $\alpha_1,\alpha_2$ and $\alpha_3$, we substitute \eqref{4.0}-\eqref{4.6} into \eqref{4.8}-\eqref{4.10} and  obtain that
\begin{align}
	&\left(\f{ik}{U_s'(-1)}+O(1)\eps^{\f37}|\log\eps|\right)\a_1+ik\delta\a_2+O(1)\eps^{\infty}\a_3=-\left[ik\left(c-\tau k^2\right)+O(1)\eps^{\f57}|\log\eps|\right],\label{4.11}\\
	&O(1)|\log\eps|\a_1+\left(-\frac{\text{Ai}(1,z_0)}{\text{Ai}(2,z_0)}+O(1)\eps^{\f17}\right)\a_2+O(1)\eps^{\f17}\a_3=-U_s'(-1)+O(1)\eps^{\f17},\label{4.12}\\
    &\left(\f{ik}{U_s'(1)}+O(1)\eps^{\f37}|\log\eps|\right)\a_1+O(1)\eps^{\infty}\a_2+ik\tilde{\delta}\a_3=-\left(ikc+O(1)\eps^{\f57}\right).\label{4.13}
\end{align}
Recall that $k\sim \eps^{\f17}$, $\delta\sim |c|\sim \eps^{\f27}$. From \eqref{4.11}-\eqref{4.13}, we can see that $\a_1\sim \eps^{\f27}$, $\a_2\sim O(1)$ and $\alpha_3\sim O(1)$. Then 
by taking the leading order of both sides of equations in \eqref{4.11}-\eqref{4.13}, and using $\tilde{\delta}=\beta^{\f13}\delta$,
 we obtain
\begin{align}
	\alpha_1&=-\delta[U_s'(-1)]^2\frac{\text{Ai}(2,z_0)}{\text{Ai}(1,z_0)}-U_s'(-1)(c-\tau k^2)+O(1)\eps^{\f37},\label{4.15}\\
	\a_2&=U_s'(-1)\frac{\text{Ai}(2,z_0)}{\text{Ai}(1,z_0)}+O(1)\eps^{\f17},\label{4.14}\\
	\alpha_3&=-\delta^{-1}\left[(1+\beta)\beta^{-\f13}c-\tau\beta^{\f23} k^2\right]-U_s'(-1)\beta^{\f23}\frac{\text{Ai}(2,z_0)}{\text{Ai}(1,z_0)}+O(1)\eps^{\f17}.\label{4.16}
\end{align}
This completes the proof of the lemma.
\end{proof}

To match the last boundary condition, that is, $u(y;c)|_{y=1}=0$, we introduce the function:
\begin{align}
	\text{I}(c)\eqdef u(1;c)=u^s_+(1;c)+\alpha_1(c)u^s_-(1;c)+\alpha_2(c)u^f_-(1;c)+\alpha_3(c)u^f_+(1;c),\nonumber
\end{align}
where constants $\alpha_1, \alpha_2$ and $\alpha_3$ are determined in Lemma \ref{lem4.1}. 
It is obvious that if $c$ is a zero point of $\text{I}(c)$, then \begin{align}\vec{\Xi}(y;c)=\vec{\Xi}^s_+(y;c)+\alpha_1\vec{\Xi}^s_-(y;c)+\alpha_2\vec{\Xi}^f_-(y;c)+\alpha_3\vec{\Xi}^f_+(y;c)\label{4.23}
\end{align}
 defines a solution to the eigenvalue problem \eqref{1.4} with desired boundary conditions \eqref{1.4-1}.\\

We are now ready to prove the main result in this paper.\\

{\bf Proof of Theorem \ref{thm1.1}:}  Let 
 $k=T_0\epsilon^\frac17$ where the constant $T_0>1$ is a sufficiently large. We look for the zero point of $\text{I}(c)$ in the disk  
 \begin{align}\label{cd}
 	D=\bigg\{c\in\mathbb{C}~\bigg||c-c_0|\leq T^{-2}_0\epsilon^\frac27\bigg\}
\end{align}
centered at \begin{align}
 	c_0= \frac{\tau\beta T_0^2}{1+\beta}\eps^{\f27}+T_0^{-\f32}e^{\f14\pi i}\frac{\left|U_s'(1)\right|^{\f12}(1+\beta^{\f32})}{\tau^{\f12}\beta^{\f12}(1+\beta)^{\f12}}\eps^{\f27}.\label{c0}
 \end{align}
From Proposition \ref{prop4.1} and \eqref{4.17}-\eqref{4.19} in Lemma \ref{lem4.1}, we 
have 
\begin{align}
	\text{I}(c)=&
	U_s'(1)-U_s'(-1)\beta^{\f23}\frac{\text{Ai}(2,z_0)}{\text{Ai}(1,z_0)}\frac{\text{Ai}(1,\tilde{z}_0)}{\text{Ai}(2,\tilde{z}_0)}\nonumber\\
	&-\delta^{-1}\left[(1+\beta)\beta^{-\f13}c-\tau\beta^{\f23}k^2\right]\frac{\text{Ai}(1,\tilde{z}_0)}{\text{Ai}(2,\tilde{z}_0)}
	+O(1)\eps^{\f17},\label{4.20}
\end{align}
where$z_0$ and $\tilde{z}_0$ are defined in \eqref{2.3.3} and \eqref{2.3.7-1} respectively. Next we compute the leading order terms of $\frac{\text{Ai}(1,\tilde{z}_0)}{\text{Ai}(2,\tilde{z}_0)}$ and $\frac{\text{Ai}(2,{z}_0)}{\text{Ai}(1,{z}_0)}$ as follows.  By taking $T_0>1$ sufficiently large, we can find a positive constant $b_0$ independent of $\eps$ and $T_0$, such that for any $c\in D$, 
it holds
\begin{align}
\text{Im}c\geq b_0T_0^{-\f32}\eps^{\f27},~\arg c\in \left( 0,b_0T_0^{-\f72}\right),~|c|=\frac{\tau\beta T_0^2}{1+\beta}\eps^{\f27}\left(1+O(1)T_0^{-\f72}\right)\label{4.21}.
\end{align}
From \eqref{d1}, we have
\begin{align}
	\tilde{z}_0=-\tilde{\delta}^{-1}c=-e^{\f16\pi i}\left|U_s'(1)\right|^{\f13}T_0^{\f13}\eps^{-\f27}c\nonumber.
\end{align}
If $c$ is in the disk $D$, then from \eqref{4.21} we can obtain 
\begin{align}\nonumber
	|\tilde{z}_0|=\frac{\tau\beta\left|U_s'(1)\right|^{\f13}}{1+\beta} T_0^{\f73}\left(1+O(1)T_0^{-\f{7}2}\right),~ -\f56 \pi<\arg z_0<-\f56 \pi+b_0T_0^{-\f72}.
\end{align}
Then from the asymptotic behavior of the Airy profile (cf. \cite{GGN1,GMM1}), it holds that
\begin{align}
	\frac{\text{Ai}(1,\tilde{z}_0)}{\text{Ai}(2,\tilde{z}_0)}=-\tilde{z}_0^{\f12}+O(1)|\tilde{z}_0|^{-1}=-e^{-\f{5}{12}\pi i}\frac{\tau^{\f12}\beta^{\f12}\left|U_s'(1)\right|^{\f16}}{(1+\beta)^{\f12}}T_0^{\f76}+O(1)T_0^{-\f73}.\label{4.22}
\end{align}
Similarly, we have
\begin{align}\label{4.22-1}
	\frac{\text{Ai}(2,{z}_0)}{\text{Ai}(1,{z}_0)}=\frac{-e^{\frac{5}{12}\pi i}(1+\beta)^{\f12}}{\tau^{\f12}\beta^{\f12}\left[U_s'(-1)\right]^{\f16}}T_0^{-\f76}\left(1+O(1)T_0^{-\f72}\right).
\end{align}
Plugging the asymptotic expansion \eqref{4.22} and \eqref{4.22-1} into \eqref{4.20}, we deduce that
\begin{align}
	\text{I}(c)=\text{I}_{\text{lin}}(c)+O(1)\left(T_0^{-\f{7}{2}}+\eps^{\f17}\right),\nonumber
\end{align}
where the linear function
\begin{align}
	\text{I}_{\text{lin}}(c)=U_s'(1)-U_s'(-1)\beta^{\f12}+e^{-\f14\pi i}\frac{\tau^{\f12}\left|U_s'(1)\right|^{\f12}\beta^{\f56}}{(1+\beta)^{\f12}}T_0^{\f32}\left[(1+\beta)\beta^{-\f13}c-\tau\beta^{\f23}k^2\right].\nonumber
\end{align}
By using the same argument as in \cite{YZ}, one can show that the function $\text{I}(c)$ is analytic in the disk $D$. Moreover, one can check that $c_0$ given in \eqref{c0} is the unique zero point of linear function $\text{I}_{\text{lin}}(c)$. On the circle $\pa D$, we have
\begin{align}
	\left|\text{I}_{\text{lin}}(c)\right|=\tau^{\f12}\left|U_s'(1)\right|^{\f12}\beta^{\f12}(1+\beta)^{\f12}T_0^{-\f12},\nonumber
\end{align}
and
\begin{align}
	\left|\text{I}(c)-	\text{I}_{\text{lin}}(c)\right|\leq C\left(T_0^{-\f72}+\eps^{\f17}\right)\leq	
	 \f12\tau^{\f12}\left|U_s'(1)\right|^{\f12}\beta^{\f12}(1+\beta)^{\f12}T_0^{-\f12}\leq \f12\left|\text{I}_{\text{lin}}(c)\right|.\nonumber
\end{align}
By Rouch\'e's Theorem, the function $\text{I}(c)$ and $\text{I}_{\text{lin}}(c)$ have the same number of zero point in $D$. Thus, there exists a unique $c\in D$,  such that \eqref{4.23} defines a solution of eigenvalue problem \eqref{1.4} with boundary conditions \eqref{1.4-1}. Therefore, the proof of Theorem \ref{thm1.1} is completed.

\section{Appendices}
First of all, we give in the following lemma the boundary values of $\varphi_-$ defined in \eqref{2.1.5-1}.
\begin{lemma}\label{lemA1}
For sufficiently small $|c|$, the boundary values of $\varphi_-(y)$ satisfy:
\begin{align}
\varphi_-(\pm1)&=\frac{-1}{U^\prime_s(\pm1)}+O(1)|c\log{Im c}|,\label{ap1}\\
\varphi^{\prime}_-(\pm1)&=O(1)|\log{Im c}|.\label{ap2}
\end{align}
\begin{proof}
We only show \eqref{ap1} and \eqref{ap2} for $y=1$, since the boundary value at $y=-1$ can be obtained in the same way. Evaluating \eqref{2.1.5-1} at $y=1$, we have
\begin{align}
\varphi_-(1)&=(U_s(1)-c)\int^1_0\frac{1}{(U_s(x)-c)^2}\dd x-m^2(U_s(1)-c)\nonumber\\
&
=-c\int^1_0\frac{1}{(U_s(x)-c)^2}\dd x+m^2c\nonumber\\
&=-c\int^1_{\frac{1}{2}}\frac{1}{(U_s(x)-c)^2}\dd x-c\int^{\frac{1}{2}}_0\frac{1}{(U_s(x)-c)^2}\dd x+O(1)|c|.\label{ap3}
\end{align}
For $x\in \left(0,\frac{1}{2}\right)$, we have $U_s(x)=1-x^2\geq \frac34$.
Then the second integral in \eqref{ap3} satisfies
\begin{equation}
\bigg|\int^{\frac{1}{2}}_0\frac{c}{(U_s(x)-c)^2}\dd x\bigg|\leq\frac{|c|}{2\left(\frac{9}{16}-|c|^2\right)}\\
\leq O(1)|c|, \text{ for } |c|\ll1.\label{ap5}
\end{equation}
When $x\in\left(\frac12,1\right)$, $U$ is not denegerate because that $|U^\prime_s(x)|=2x\ge1$. Thus, we can compute the first integral in \eqref{ap3} as
\begin{align}
&-\int^1_\frac12 \frac{c}{(U_s(x)-c)^2}\dd x=\int^1_\frac12 \frac{\dd}{\dd x}\bigg(\frac{c}{U_s(x)-c}\bigg)\frac{1}{U^\prime_s(x)}\dd x\nonumber\\
&\qquad=\frac{c}{U^\prime_s(1)(U_s(1)-c)}-\frac{c}{U^\prime_s\left(\frac12\right)\left(U_s\left(\frac12\right)-c\right)}-\int^1_\frac12\frac{c}{U_s(x)-c}\bigg(\frac{1}{U^\prime_s(x)}\bigg)^\prime \dd x\nonumber\\
&\qquad=-\frac{1}{U^\prime_s(1)}+c\int^1_\frac12 \frac{\dd}{\dd x}(\log(U_s(x)-c))\frac{U^{\prime\prime}_s(x)}{(U^{\prime}_s(x))^3}+O(1)|c|\nonumber\\
&\qquad=-\frac{1}{U^\prime_s(1)}+c\log(U_s(y)-c)\frac{U^{\prime\prime}_s(y)}{(U^\prime_s(y))^3}\bigg|_{y=\f12}^{y=1}-c\int^1_\frac12\log(U_s(x)-c)\left(\frac{U^{\prime\prime}_s}{(U^\prime_s)^3}\right)^\prime \dd x+O(1)|c|\nonumber\\
&\qquad=-\frac{1}{U^\prime_s(1)}+O(1)|c\log \text{Im} c|.\label{ap6}
\end{align}
Plugging \eqref{ap5} and \eqref{ap6} into \eqref{ap3}, we obtain:
\begin{equation}
\varphi_-(1)=-\frac{1}{U^\prime_s(1)}+O(1)|c \log \text{Im} c|,\nonumber
\end{equation}
which is \eqref{ap1}. Next we show \eqref{ap2}. Differentiating \eqref{2.1.5-1} yields 
\begin{align}\label{ap4}
	\varphi_-'(y)=\frac{U_s'(y)\varphi_-(y)+A(y)}{U_s(y)-c}.
\end{align}
Evaluaing both sides of \eqref{ap4} at $y=1$ gives
\begin{align}
\varphi^\prime_-(1)&=\frac{U^\prime_s(1)\varphi_-(1)+1}{U_s(1)-c}-m^2(U_s(1)-c)\nonumber\\
&=\frac{O(1)|c\log \text{Im} c |}{-c}+m^2c\nonumber\\
&=O(1)|\log \text{Im} c|,\nonumber
\end{align}
which is \eqref{ap2}. The proof of Lemma \ref{lemA1} is completed.
\end{proof}
\end{lemma}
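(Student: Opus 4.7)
\textbf{Plan for the proof of Lemma \ref{lemA1}.} By the symmetry of the construction around $y=0$ together with the hypotheses $U_s(\pm 1)=0$ and $U'_s(\pm 1)\neq 0$, it suffices to treat the boundary $y=1$; the case $y=-1$ is handled by the same argument. Evaluating \eqref{2.1.5-1} at $y=1$ and using $U_s(1)=0$ reduces the first claim to understanding the near-singular integral
$$J(c):=\int_0^1\frac{1}{(U_s(x)-c)^2}\,\dd x,$$
since $\varphi_-(1)=-cJ(c)+m^2c$ and the last term is already $O(|c|)$. The point is that $U_s(x)-c\to -c$ as $x\to 1$, so the integrand blows up like $|c|^{-2}$ near the endpoint, and the prefactor $-c$ is needed to renormalize this to an $O(1)$ quantity whose leading value I must identify exactly.

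The plan is to split $J(c)$ into a regular piece on $[0,1/2]$, where $U_s\geq 3/4$ makes the integrand bounded so that this piece contributes only $O(1)$ (hence $O(|c|)$ after multiplying by $-c$), and a singular piece on $[1/2,1]$. On the singular piece $U_s$ is non-degenerate as a function because the concavity assumption gives $|U'_s(x)|\gtrsim 1$ there, so I can integrate by parts via the identity
$$\frac{1}{(U_s-c)^2}=-\frac{1}{U'_s}\,\partial_x\!\left(\frac{1}{U_s-c}\right).$$
The boundary contribution at $x=1$ produces $\frac{1}{U'_s(1)}\cdot\frac{1}{U_s(1)-c}=-\frac{1}{cU'_s(1)}$, which after multiplying by the outside factor $-c$ yields precisely the leading constant $-1/U'_s(1)$ in \eqref{ap1}. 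The remaining interior integral contains $(U_s-c)^{-1}$ against a smooth factor $(1/U'_s)'$, and I will integrate by parts once more to convert $(U_s-c)^{-1}$ into $\log(U_s-c)$ times something smooth.

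The main technical obstacle is controlling this residual logarithmic integral. Because $\mathrm{Im}\,c>0$, the branch of $\log(U_s-c)$ is well-defined along $[1/2,1]$, and the uniform bound $|\log(U_s(x)-c)|\leq C(1+|\log\mathrm{Im}\,c|)$ — together with the single boundary evaluation $\log(-c)$ of the same order — turns this remainder into $O(|\log\mathrm{Im}\,c|)$. Reinstating the outer factor $-c$ gives the $O(|c\log\mathrm{Im}\,c|)$ error of \eqref{ap1}.

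For \eqref{ap2}, differentiating \eqref{2.1.5-1} directly yields
$$\varphi'_-(y)=\frac{U'_s(y)\varphi_-(y)+A(y)}{U_s(y)-c}-m^2(U_s(y)-c),$$
so that at $y=1$ the second term is $O(|c|)$ while the numerator of the first term is $U'_s(1)\varphi_-(1)+1$. This is exactly where \eqref{ap1} is needed: substituting $\varphi_-(1)=-1/U'_s(1)+O(|c\log\mathrm{Im}\,c|)$ produces the cancellation $U'_s(1)\cdot(-1/U'_s(1))+1=0$, leaving $O(|c\log\mathrm{Im}\,c|)$ in the numerator. Dividing by $U_s(1)-c=-c$ then gives the claimed $O(|\log\mathrm{Im}\,c|)$ bound. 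Thus the order of proof — first \eqref{ap1}, then use its leading constant to cancel the singularity in the derivative formula — is essential, and besides this cancellation the estimate for $\varphi'_-$ is immediate.
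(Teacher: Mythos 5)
Your plan matches the paper's proof essentially step for step: the same split of the integral at $x=1/2$, the same integration by parts with $\frac{1}{(U_s-c)^2}=-\frac{1}{U_s'}\partial_x\bigl(\frac{1}{U_s-c}\bigr)$ to extract the boundary term $-1/U_s'(1)$, the same second integration by parts to produce logarithmic factors bounded by $O(|\log\mathrm{Im}\,c|)$, and the same cancellation $U_s'(1)\varphi_-(1)+1=O(|c\log\mathrm{Im}\,c|)$ driving the estimate for $\varphi_-'(1)$. One minor slip: your displayed derivative formula $\varphi_-'=\frac{U_s'\varphi_-+A}{U_s-c}-m^2(U_s-c)$ double-counts the compressibility correction (the correct identity is $\varphi_-'=\frac{U_s'\varphi_-+A}{U_s-c}$, equivalently $\frac{U_s'\varphi_-+1}{U_s-c}-m^2(U_s-c)$); your subsequent reasoning uses the numerator $U_s'(1)\varphi_-(1)+1$, so this is just a typo and the estimate is unaffected.
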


The following lemma concerns some bounds on $\varphi_-.$ 
\begin{lemma}\label{lemA3}
There exists a positive constant $\gamma,$ such that if $c$ lies in the half disk $\{\text{Im}c>0, |c|\leq \gamma\}$, then $\varphi_-$ satisfies the following estimates.
	\begin{itemize}
		\item[(1)] If $y\in \left(-\f12,\f12\right)$, we have
		\begin{align}
			\left|\pa_y^k\varphi_-(y)\right|\leq C,~k=0,1,2,3,4.\label{ap11}
		\end{align}
		\item[(2)] If $y\in (-1,1)\setminus \left(-\f12,\f12\right),$ we have the following pointwise estimate
		\begin{equation}\label{ap12}
			\begin{aligned}
				&\left|\varphi_-(y)\right|\leq C,~\varphi_-'(y)=\frac{-U_s''(y)}{\left(U_s'(y)\right)^2}\log\left(U_s(y)-c\right)+O(1),\\
				&\varphi_-''(y)=-\frac{U_s''(y)}{U_s'(y)\left(U_s(y)-c\right)}-\frac{\left(U_s''(y)\right)^2}{\left(U_s'(y)\right)^3}\log\left(U_s(y)-c\right)+O(1),\\
				&\varphi_-'''(y)=\frac{U_s''(y)}{\left(U_s(y)-c\right)^2}+O(1),~\pa_y^4\varphi_-=-\frac{3U_s'U_s''}{(U_s-c)^3}.
			\end{aligned}
		\end{equation}
		\item[(3)] Furthermore, it holds that
		\begin{equation}\label{ap13}
			\begin{aligned}
				&\|\varphi_-\|_{L^\infty}\leq C,~	\|\varphi_-'\|_{L^\infty}\leq C\left|\log\text{Im}c\right|,~\|\varphi_-'\|_{L^2}\leq C,\\
				&\|\pa_y^j\varphi_-\|_{L^\infty}\leq C\left|\text{Im}c\right|^{-j+1},~j=2,3,4,\\ &\|\pa_y^j\varphi_-\|_{L^2}\leq C\left|\text{Im}c\right|^{-j+\f32},~j=1,2,3,4.
			\end{aligned}
		\end{equation}
	\end{itemize}
\end{lemma}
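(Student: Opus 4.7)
The approach rests on the explicit formula
\begin{align*}
\varphi_-(y) = (U_s(y)-c) I(y) - m^2(U_s(y)-c) y, \quad I(y)\eqdef\int_0^y (U_s(x)-c)^{-2}\, \dd x,
\end{align*}
combined with the first-order ODE identity
\begin{align*}
(U_s(y)-c)\varphi_-'(y) - U_s'(y)\varphi_-(y) = A(y),
\end{align*}
which comes from the $k=0$ Lees--Lin equation after fixing the constant of integration by $\varphi_-(0)=0$ (cf.\ \eqref{ap4}). I would split the interval into two regimes: the interior $|y|\le 1/2$, where $U_s$ is bounded below by a positive constant (by the concavity and boundary conditions in \eqref{as}), and the boundary region $|y|\in(1/2,1)$, where $U_s(y)\to 0$ as $y\to\pm 1$ produces the singular behaviour near the critical layer.

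On the interior region, for small $|c|\le\gamma$ the integrand $(U_s-c)^{-2}$ and all its $y$-derivatives are uniformly bounded, so repeated differentiation of the explicit formula immediately yields \eqref{ap11}. On the boundary region, the key computation is an iterated integration by parts on $I(y)$: a first IBP via $\f{\dd}{\dd x}[-(U_s-c)^{-1}]=U_s'(U_s-c)^{-2}$ extracts the leading singular piece $-[(U_s(y)-c)U_s'(y)]^{-1}$, and a second IBP on the residual $\int U_s''(U_s')^{-2}(U_s-c)^{-1}\,\dd x$, using $(U_s-c)^{-1}=(U_s')^{-1}\pa_x\log(U_s-c)$, produces the logarithmic correction $-U_s''(U_s')^{-3}\log(U_s-c)$ plus a bounded remainder (this mimics the computation already performed in the proof of Lemma \ref{lemA1}). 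Substituting the resulting expansion of $I$ back into $\varphi_-=(U_s-c)I-m^2(U_s-c)y$ yields $\varphi_-=-1/U_s'+O(1)$, since the $\log$-term multiplied by $(U_s-c)$ is of size $O((U_s-c)\log(U_s-c))=O(1)$. For the derivative formulas in \eqref{ap12}, I would successively differentiate the ODE identity to obtain $\varphi_-''=U_s''\varphi_-/(U_s-c)-2m^2U_s'$, and analogous closed-form expressions for $\pa_y^3\varphi_-$ and $\pa_y^4\varphi_-$, then substitute the expansion of $\varphi_-$ into each identity to read off the displayed leading terms.

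The norm estimates \eqref{ap13} follow from these pointwise formulas. The $L^\infty$ bounds use $|\log(U_s-c)|\le C|\log\text{Im}\,c|$ and $|U_s-c|^{-1}\le|\text{Im}\,c|^{-1}$ on the real interval, together with the interior estimates on the complement. For the $L^2$ bounds, the change of variables $u=U_s(y)$ near each boundary (where $U_s'$ is bounded away from zero by \eqref{as}) reduces the integrals to $\int_0^{u_0}|u-c|^{-2(j-1)}\,\dd u$ up to harmless Jacobian factors, and the scaling $u=|\text{Im}\,c|\sigma$ gives the predicted $|\text{Im}\,c|^{-2j+3}$ growth. The main obstacle is purely combinatorial: the identities for $\varphi_-'''$ and $\pa_y^4\varphi_-$ contain several singular contributions at orders $(U_s-c)^{-k}$, $k=1,2,3$, arising both from the Leibniz rule on $(U_s-c)I$ and from iterated $\log(U_s-c)$ terms in the lower-order derivatives; these must be tracked carefully to isolate the leading singular term in \eqref{ap12}, and the cross-cancellations at the highest orders (e.g.\ the would-be $(U_s-c)^{-4}$ and $(U_s-c)^{-5}$ contributions in $\pa_y^4\varphi_-$ that cancel identically when the Leibniz expansion is collected) must be verified by direct algebra.
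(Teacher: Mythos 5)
Your approach is essentially the paper's: the explicit formula \eqref{2.1.5-1}, the interior/boundary split, the integration-by-parts expansion of $I(y)=\int_0^y(U_s(x)-c)^{-2}\,\dd x$ mirroring \eqref{ap6}, and the $L^\infty$/$L^2$ norms read off from the pointwise singular structure (the paper defers this last step to \cite{LYZ}). Your use of the ODE identity $(U_s-c)\varphi_-'-U_s'\varphi_-=A$ as a recursion for the higher derivatives is a cosmetic variant of the paper's direct differentiation of the explicit formula (cf.\ \eqref{ap14}), and it in fact sidesteps the $(U_s-c)^{-4}$, $(U_s-c)^{-5}$ cancellations you worry about; both routes tacitly use $U_s'''\equiv 0$ to collapse the higher-order formulas and yield $\partial_y^4\varphi_-=-2U_s'U_s''(U_s-c)^{-3}$, so the coefficient ``$3$'' in the lemma statement is a typo.
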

\begin{proof}
	By differentiating \eqref{2.1.5-1} up to the fourth order, we obtain
\begin{equation}\label{ap14}
	\begin{aligned}
		\varphi_-'&=U_s'\int_0^y\frac{1}{(U_s-c)^2}\dd x+\frac{1}{U_s-c}-m^2\left(yU_s'+U_s-c\right),\\
		\varphi_-''&=U_s''\int_0^y\frac{1}{(U_s-c)^2}\dd x-m^2\left(yU_s''+2U_s'\right),\\
		\varphi_-'''&=\frac{U_s''}{(U_s-c)^2}-3m^2U_s'',~
		\pa_y^4\varphi_-=-\frac{2U_s'U_s''}{(U_s-c)^3}.
	\end{aligned}
\end{equation}
Here we have used $\pa_y^jU_s=0$ when $j=3,4$. For $y\in \left(-\f12,\f12\right)$, $|U_s-c|\gtrsim 1$. Thus \eqref{ap11} follows from the explicit formula \eqref{ap14}. For $ y\in (-1,1)\setminus \left(-\f12,\f12\right)$, as \eqref{ap6} we can obtain the following pointwise estimate
\begin{align}\label{ap15}
	\int_0^y\frac{1}{(U_s-c)^2}\dd x=-\frac{1}{U_s'(y)(U_s(y)-c)}-\log(U_s(y)-c)\frac{U_s''(y)}{(U_s'(y))^3}+O(1).
\end{align} 
Substituting \eqref{ap15} into the explicit formula \eqref{ap14}, we obtain pointwise estimates \eqref{ap12}. 
The bounds \eqref{ap13} on $L^2$ and $L^\infty$-norms can be obtained in the same way as in \cite{LYZ} and we omit the details for brevity. The proof of Lemma \ref{lemA3} is completed.
\end{proof}

Finally, we have the following lemma concerning about the weight function $w(y)$ defined in \eqref{A1.1}. Set 
\begin{align}\label{ap8}
	\cw(y)= -(1-m^2U_s^2)U_s''-2m^2U_s|U_s'|^2.
\end{align}
\begin{lemma}\label{lemA2}
	Let the Mach number $m\in \left(0,\f{1}{\sqrt{3}}\right)$. For sufficiently small $|c|\ll 1$, the weight function $w$ has the following expansion
	\begin{align}
		w(y)=w_0(y)+cw_1(y)+O(1)|c|^2,\label{ap7}
	\end{align}
where 
\begin{align}\
	w_0&=(1-m^2U_s^2)^2\cw^{-1},\nonumber\\
	w_1&=4m^2U_s(1-m^2U_s^2)\cw^{-1}-2m^2(1-m^2U_s^2)^2\left(|U_s''|U_s+|U_s'|^2\right).\nonumber
\end{align}
Moreover, there exists a positive constant $\gamma_0$, such that
\begin{align}\label{ap10}
	w_0-U_sw_1\geq \gamma_0.
\end{align}
\begin{proof}
	We show that $\cw(y)$ has a strictly positive lower bound, hence $w_0$ and $w_1$ are well-defined. Then the expansion \eqref{ap7} follows from a straightforward computation. We take the plane Poiseuille flow $U_s(y)=1-y^2$ as an example.  Then
	\begin{align}
		\cw(y)=2(1-m^2)+6m^2y^2\left(y^2-\f23\right).\nonumber
	\end{align} 
Therefore,
$
	\min_{y\in [-1,1]} \cw(y)=\cw(y)|_{y=\pm \frac{1}{\sqrt{3}}}=2\left(1-\frac{4m^2}{3}\right),
$
which is positive when the Mach number $m\in \left(0,\frac{\sqrt{3}}{2}\right)$. For \eqref{ap10}, following the same computation as (3.29) in \cite{YZ}, one has $w_0-U_sw_1\sim 1-3m^2U_s^2$. Therefore, $w_0-U_sw_1$ has a strictly positive lower bound when the Mach number $m\in \left(0,\f1{\sqrt{3}}\right)$. The proof of Lemma \ref{lemA2} is completed.
\end{proof}
\end{lemma}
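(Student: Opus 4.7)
The plan is first to derive the expansion (\ref{ap7}) by Taylor-expanding in $c$ about $c=0$, and then to establish the uniform lower bound (\ref{ap10}) by exploiting the subsonic threshold $m<1/\sqrt{3}$.

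For the expansion, the starting point is a direct computation: since $\partial_y A = -2m^2(U_s-c)U_s'$, one finds
$$-\partial_y(A^{-1}U_s') = A^{-2}\left[-(1-m^2(U_s-c)^2)U_s'' - 2m^2(U_s-c)(U_s')^2\right].$$
At $c=0$ the bracket reduces exactly to $\cw$ from (\ref{ap8}) and the prefactor to $A_0^{-2}$ with $A_0:=1-m^2U_s^2$, giving $w\big|_{c=0}=A_0^2/\cw = w_0$. Differentiating in $c$ and evaluating at $c=0$ then produces the claimed formula for $w_1$ (with a $\cw^{-2}$ on the second summand), and the Taylor remainder is $O(|c|^2)$ uniformly in $y$ thanks to boundedness of $U_s,U_s',U_s''$.

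The main work is the lower bound. I would first show that $\cw>0$ uniformly on $[-1,1]$, since otherwise $w_0$ is ill-defined. For the representative case $U_s=1-y^2$, one computes $\cw(y)=2-2m^2(1-y^2)(1+3y^2)$, and maximizing $(1-t)(1+3t)$ over $t=y^2\in[0,1]$ shows the maximum is $4/3$ at $t=1/3$, so $\min_y\cw = 2(1-4m^2/3)>0$ already for $m<\sqrt{3}/2$. Next, placing $w_0-U_sw_1$ over the common denominator $\cw^2$ and substituting $\cw=(1-m^2U_s^2)|U_s''|-2m^2U_s(U_s')^2$ (recall $U_s''<0$), I expect the numerator to simplify to
$$(1-m^2U_s^2)\Big[(1-m^2U_s^2)|U_s''|(1-3m^2U_s^2) + 8m^4 U_s^3(U_s')^2\Big].$$
Both bracketed terms are non-negative as soon as $m^2U_s^2<1/3$, which holds under $m<1/\sqrt{3}$ together with $\|U_s\|_{L^\infty}\leq 1$; combined with the uniform positivity of $\cw$, this yields $w_0-U_sw_1\geq \gamma_0$ for some $\gamma_0>0$.

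The step I expect to be the main obstacle is precisely the algebraic cancellation extracting the clean factor $(1-3m^2U_s^2)$ in the numerator above. The cancellation is delicate because the naive pairing of the two summands in $w_1$ (working only modulo $\cw^{-1}$) gives an apparent factor $(1-5m^2U_s^2)$, which would force the stricter threshold $m<1/\sqrt{5}$ and miss the sharp bound $m<1/\sqrt{3}$ required for ellipticity of the compressible Orr-Sommerfeld operator in Lemma \ref{lem3.1}. Keeping the $\cw^{-2}$ contribution and performing the regrouping exactly is therefore the crux of the argument; once that factor appears, the sharp Mach-number threshold becomes transparent.
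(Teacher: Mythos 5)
Your computation is correct and, in fact, more self-contained than the paper's own proof: where the paper simply cites the computation ``(3.29) in [YZ]'' for the crucial identity $w_0 - U_s w_1 \sim 1 - 3m^2 U_s^2$, you carry it out explicitly. I verified the regrouping: putting everything over $\cw^2$ and substituting $\cw = (1-m^2U_s^2)|U_s''| - 2m^2 U_s |U_s'|^2$, the $|U_s''|$-terms combine to $(1-m^2U_s^2)|U_s''|\big[(1-m^2U_s^2) - 4m^2U_s^2 + 2m^2U_s^2\big] = (1-m^2U_s^2)|U_s''|(1-3m^2U_s^2)$, and the $|U_s'|^2$-terms telescope to $8m^4 U_s^3|U_s'|^2$, exactly as you claim. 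Since $0\le U_s\le 1$ and $|U_s''|$ is bounded below on $[-1,1]$ (by concavity and continuity), both summands are nonnegative under $m<1/\sqrt3$ and the first is bounded below, which gives the uniform $\gamma_0$.

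You also correctly flag an issue in the stated formula for $w_1$: differentiating $w = -\big[\partial_y(A^{-1}U_s')\big]^{-1}$ in $c$ at $c=0$ gives
\begin{align*}
w_1 = 4m^2 U_s (1-m^2U_s^2)\,\cw^{-1} - 2m^2(1-m^2U_s^2)^2\big(|U_s''|U_s + |U_s'|^2\big)\,\cw^{-2},
\end{align*}
i.e.\ the second summand carries a factor $\cw^{-2}$ that is absent in the statement. This is a genuine typo in the lemma as printed; with $\cw^{-2}$ restored, the factoring produces the sharp threshold $1-3m^2U_s^2$, whereas (as you note) collapsing both terms onto a single $\cw^{-1}$ would misleadingly suggest a $1-5m^2U_s^2$ factor and the wrong Mach-number condition. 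Your warning about this pitfall is apt. The only small polish I would add: the claim ``naive pairing gives $1-5m^2U_s^2$'' should not be read as implying the bound actually fails for $1/\sqrt5 < m < 1/\sqrt3$ --- a spot-check at $y=0$ shows the extra positive term would still rescue it there --- but your main point that the clean algebraic factorization only emerges once the $\cw^{-2}$ contribution is kept is exactly right.
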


\bigbreak
\noindent{\bf Acknowledgment.} The research of Yang was supported by  the Hong Kong PhD Fellowship. The research of Zhang was  supported by the  Start-up Fund  (P0043862), and by the Research Centre for Nonlinear Analysis of  The Hong Kong Polytechnic University.

\end{document}